\newcommand{\bbN}{\mathbb{N}}
\newcommand{\bbZ}{\mathbb{Z}}
\newcommand{\bbR}{\mathbb{R}}
\newcommand{\bbP}{\mathbb{P}}
\newcommand{\bbE}{\mathbb{E}}
\newcommand{\bbjedan}{\mathbbm{1}}
\newcommand{\calE}{\mathcal{E}}
\newcommand{\calF}{\mathcal{F}}
\newcommand{\calD}{\mathcal{D}}
\newcommand{\calP}{\mathcal{P}}
\newcommand{\aps}[1]{\vert #1 \vert}
\newcommand{\APS}[1]{\left\vert #1 \right\vert}
\newcommand{\floor}[1]{\lfloor #1 \rfloor}
\newcommand{\FLOOR}[1]{\left\lfloor #1 \right\rfloor}
\newcommand{\OBL}[1]{\left( #1 \right)}
\newcommand{\UGL}[1]{\left[ #1 \right]}
\let\ge\geqslant
\let\le\leqslant
\newtheorem{DEF}{Definition}[section]
\newtheorem{TM}[DEF]{Theorem}
\newtheorem{PROP}[DEF]{Proposition}
\newtheorem{LM}[DEF]{Lemma}
\newtheorem{COR}[DEF]{Corollary}
\newtheorem{Claim}{Claim}
\numberwithin{equation}{section}
\def\@tocline#1#2#3#4#5#6#7{\relax
  \ifnum #1>\c@tocdepth 
  \else
    \par \addpenalty\@secpenalty\addvspace{#2}%
    \begingroup \hyphenpenalty\@M
    \@ifempty{#4}{%
      \@tempdima\csname r@tocindent\number#1\endcsname\relax
    }{%
      \@tempdima#4\relax
    }%
    \parindent\z@ \leftskip#3\relax \advance\leftskip\@tempdima\relax
    \rightskip\@pnumwidth plus4em \parfillskip-\@pnumwidth
    #5\leavevmode\hskip-\@tempdima
      \ifcase #1
       \or\or \hskip 1em \or \hskip 2em \else \hskip 3em \fi%
      #6\nobreak\relax
    \dotfill\hbox to\@pnumwidth{\@tocpagenum{#7}}\par
    \nobreak
    \endgroup
  \fi}
\title[Transition probability estimates]{Transition probability estimates\\ for subordinate random walks}
	\author{Wojciech Cygan}
		\address{
		Wojciech Cygan,
		Institut f\"{u}r Mathematische Stochastik,
		Technische Universit\"{a}t \newline Dresden, Germany 
		\& 
		Instytut Matematyczny,
		Uniwersytet Wrocławski, Poland
		}
		\email{wojciech.cygan@uwr.edu.pl}
	\author{Stjepan Šebek}
		\address{
		Stjepan Šebek,
		Department of Applied Mathematics,
		Faculty of Electrical\newline Engineering and Computing,
							University of Zagreb, 
							Croatia}
		\email{stjepan.sebek@fer.hr}
\date{}
\begin{document}

\begin{abstract}
Let $S_n$ be the simple random walk on the integer lattice $\mathbb{Z}^d$. For a Bernstein function $\phi$ we consider a random walk $S^\phi_n$ which is subordinated to $S_n$. Under a certain assumption on the behaviour of $\phi$ at zero we establish global estimates for the transition probabilities of the random walk $S^\phi_n$. The main tools that we apply are the parabolic Harnack inequality and appropriate bounds for the transition kernel of the corresponding continuous time random walk. 
\end{abstract}

\subjclass[2010]{60G50, 
60J45, 
60J10, 
05C81, 
35K08
}
\keywords{random walk, discrete subordination, Bernstein function, parabolic Harnack inequality, transition probability estimate, scaling condition}

\maketitle

\section{Introduction}

The main aim of this article is to obtain global estimates for the transition probabilities for a class of random walks on the integer lattice $\mathbb{Z}^d$ that are subordinated to the simple random walk. Random walks from this class are obtained via discrete subordination which was defined in \cite{BSC}. They have neither second moment nor finite support and thus studying their long time behaviour becomes very demanding. 
The procedure of discrete subordination can be regarded as a discrete counterpart of the Bochner's subordination for semigroups of operators which was widely applied in probability theory for continuous time Markov processes.

To be more precise, let $P$ be the one-step transition operator of the simple (symmetric) random walk $S_n$ on the space $\mathbb{Z}^d$, that is $Pf(x)= \frac{1}{2d}\sum_{j=1}^df(x\pm e_j)$, where $e_j$ is the unit vector in $\mathbb{Z}^d$ with $j^{\mathrm{th}}$ component $1$. For any Bernstein function $\phi$ such that $\phi (0)=0$ and $\phi (1)=1$ we define a new transition operator $P^\phi$ via the following functional equation
\begin{align*}
I - P^\phi = \phi (I-P).
\end{align*}
The operator $P^\phi -I$ generates a random walk $S^\phi _n$ which is the subordinate random walk related to the function $\phi$, see Section \ref{sec:Prem} for the probabilistic definition.

 In this article we are concerned with the transition probabilities of the random walk $S^\phi _n$ which are defined as 
$p^\phi (n,x,y) = \mathbb{P}^x (S^\phi _n =y)$. In the course of study we assume that $\phi$ is a complete Bernstein function 
and satisfies the following \textit{scaling condition}: there are  some constants $c_*, c^* > 0$ and $0 < \alpha_* \le \alpha^* < 1$ such that
\begin{equation}\label{eq:scaling}
	c_* \OBL{\frac{R}{r}}^{\alpha_*} \le \frac{\phi(R)}{\phi(r)} \le c^* \OBL{\frac{R}{r}}^{\alpha^*}, \quad 0 < r \le R \le 1.
\end{equation}
Under these two assumptions we establish global estimates for the function $p^\phi (n,x,y) $, that is we prove that for all $x,y\in \mathbb{Z}^d$ and $n\in \mathbb{N}$ it holds
\begin{align}\label{eq:main_result}
p^\phi (n,x,y)\asymp \min  \Big\{ \big(\phi^{-1}(n^{-1})\big)^{d/2},\,  \frac{n\,   \phi(\aps{x - y}^{-2})}{\aps{x - y}^d}\Big\},
\end{align}
see Theorem \ref{tm:on-diag_bounds}, Theorem \ref{thm:lower_bound} and Theorem \ref{tm:upper_bound_for_Sphi}.
In the above relation, the symbol $\asymp$ means that the ratio of the two expressions is bounded from below and from above by some positive constants.

Similar questions have already been addressed in the literature. In \cite{BL02} global estimates for the transition probabilities of random walks with unbounded range on $\mathbb{Z}^d$ were established under the assumption that one-step transition probability from $x$ to $y$ is a stable-like function, i.e.\ it is comparable to the regularly varying function $|x-y|^{-(d+\alpha)}$
for $\alpha \in (0, 2)$. Let us compare this result to our estimates \eqref{eq:main_result}.  
In \cite{MS18} it was proved  that under the same assumptions as in the present paper, the one-step transition probability of the subordinate random walk $S^\phi_n$ satisfies 
\begin{equation}\label{eq:one_step_prob_estimates}
	p^{\phi}(1,x,y) \asymp \aps{x - y}^{-d} \phi(\aps{x - y}^{-2}), \quad  \mathrm{for}\  x \neq y.
\end{equation}
In particular, if $\alpha_* = \alpha^*$ in \eqref{eq:scaling}
then we are in the scope of  \cite{BL02} but it may well happen that $0 < \alpha_* < \alpha^* < 1$. Moreover, 
condition \eqref{eq:scaling} means that the function $\phi$ is a $O$-regularly varying function at $0$ with Matuszewska indices contained in $(0,1)$, see \cite[Sec. 2]{BGT_book}. Complete Bernstein functions with such behaviour at zero can be found in the closing table of \cite{BFs_book} and include functions: $\phi (\lambda) = \lambda^\alpha +\lambda^\beta$, $\alpha , \beta \in (0,1)$; $\phi (\lambda) = \lambda^\alpha (\log (1+\lambda))^\beta$, $\alpha \in (0,1),\ \beta \in (0,1-\alpha)$; $\phi (\lambda)= (\log (\cosh( \sqrt{\lambda} )))^\alpha$, for $\alpha \in (0,1)$, etc. It is also possible to construct examples of complete Bernstein functions which fulfil \eqref{eq:scaling} and are not comparable to any regularly varying function, see e.g.\  \cite{KImSongVondr_SCi}. This shows that our estimates apply to a class of random walks whose one-step transition probabilities may not be comparable to a regularly varying function which goes beyond the assumptions of \cite{BL02}.

In \cite{MSC15} the authors found global estimates for transition probabilities for a class of Markov chains on a uniformly discrete metric measure space under the assumption that the one-step transition kernel is comparable to a regularly varying function times a homogenuos volume groth function. We mention here further related papers and monographs which focus on estimates of transition probabilities of random walks \cite{Alexop}, \cite{Barlow_book}, \cite{BarlBasKumag}, \cite{HebischSC}, \cite{La96}, \cite{Lawler_Limic_book}, \cite{Spitzer}, \cite{Trojan}, \cite{Woess_book}.

A class of subordinate random walks was introduced in paper \cite{BSC} in the context of random walks on groups. In \cite{BCT17} authors established asymptotics of the transition probabilities of subordinate random walks on $\mathbb{Z}^d$ under the assumption that $\phi$ is regularly varying at zero. This result is valid in a specific region which depends on the time and the space variables. It implies that the corresponding estimates hold only in that region whereas \eqref{eq:main_result} is true for all $x\in \mathbb{Z}^d$ and $n\in \mathbb{N}$. There are more papers where subordinate random walks were studied from potential-theoretic point of view, see
\cite{BC1}, \cite{BC2}, \cite{Mimica}, \cite{MS18} and \cite{Deng}.
Note that discrete subordination allows us to efficiently construct examples of random walks with the controlled tail behaviour. In particular, the regular variation at zero of the function $\phi$ is a necessary and sufficient condition for $S_n^\phi$ to belong to the domain of attraction of a stable law, see \cite{Mimica} and \cite{BCT17}.

Let us comment on the structure and methods of the article. 
In Section \ref{sec:Prem} we give the precise definition of the subordinate random walk and we prove some auxiliary results which include an estimate for the time to leave a ball for the random walk $S^\phi _n$. Our proof is an application of the concentration inequality from \cite{Pr81}. 
Section \ref{sec:On-diag} is devoted to the proof of the on-diagonal bound for the kernel $p^\phi (n,x,y)$. For this we use the  Fourier analytic approach which was previously applied in \cite{BCT17} to find asymptotics of $p^\phi (n,x,y)$ under the assumption that $\phi$ is a regularly varying function at zero. 
In Section \ref{sec:Parab} we prove a parabolic Harnack inequality which is in itself a valuable contribution and this is the  main tool that we use to obtain off-diagonal bounds for $p^\phi (n,x,y)$. To show this inequality we follow the elegant approach of \cite{BL02}, which was also applied in \cite{MSC15}. 
In Section \ref{sec:lower} we obtain the global lower bound by the application of the parabolic Harnack inequality combined with the on-diagonal estimate.
Section \ref{sec:off-diagonal_bounds} is a twofold paragraph. In the first part we study the continuous time random walk which is constructed from $S^\phi _n$ with the aid of an independent Poisson process. For such a process we find the upper heat kernel estimate. To get this result we apply the marvellous approach of \cite{CKW16} where the authors study stability of heat kernel estimates for jump processes on metric measure spaces.
In the second part we apply estimates for the continuous time random walk to prove hitting time estimates and, finally, upper bounds for $p^\phi (n,x,y)$.

\subsection*{Notation}
Throughout the paper $C, c, c_1, c_2, \ldots$ will denote absolute constants. Their labelling starts anew in each statement and their dependence on the function $\phi$ and on the dimension $d$ will not be mentioned explicitly. 
The cardinality of a set $A\subset \mathbb{Z}^d$ is denoted by $\aps{A}$. 
The Euclidean distance between $x$ and $y$ is denoted by $\aps{x - y}$. 
For $x \in \bbR^d$ and $r > 0$, we write $B(x, r) = \{y \in \bbZ^d : \aps{y - x} < r\}$ and $B_r=B(0, r)$.
We use notation $a \wedge b := \min\{a, b\}$ and $a \vee b := \max\{a, b\}$. 
For any two positive functions $f$ and $g$, we write $f \asymp g$ if there exist constants $c_1, c_2 > 0$ such that $c_1 \le g/f \le c_2 $. 
\section{Preliminaries}\label{sec:Prem}
Let $S_n =  X_1 + \cdots + X_n$ be the simple (symmetric) random walk in $\bbZ^d$ which starts from the origin. This means $(X_k)_{k \ge 1}$ is a sequence of independent identically distributed random variables defined on a given probability space $(\Omega, \mathscr{A}, \bbP)$ with distribution $\bbP(X_k = e_i) = \bbP(X_k = -e_i) = 1/2d$, for each $i = 1, 2, \ldots, d$. Here $e_i$ is the $i^{\mathrm{th}}$ unit vector in $\bbZ^d$.  

Let $\phi$ be a Bernstein function such that $\phi (0)=0$, $ \phi(1)=1$. Such a function admits the following integral representation
\begin{equation}\label{eq:def_of_phi}
	\phi(\lambda) = \ell \lambda + \int_{( 0, \infty )} \big( 1 - e^{-\lambda t} \big) \mu (\dif t),
\end{equation}
for $\ell \ge 0$ and a measure $\mu$ on $( 0,\infty )$ satisfying $\int_{( 0,\infty )} {(1 \wedge t) \mu(\dif t)} < \infty$, see \cite[Sec. 3]{BFs_book}. 

We consider a sequence of positive numbers $a_m^\phi$ which is related to the function $\phi$ and is defined as
\begin{align}\label{eq:def_of_c_m^phi}
	a_m^{\phi} & = \ell \delta_1(m)+ \frac{1}{m!}\int_{( 0, \infty )}{t^m e^{-t}\mu (dt)},\quad m \geq  1,
\end{align}
where $\delta_x$ is the Diraac measure at $x$.
One easily verifies that
\begin{equation*}
	\sum_{m = 1}^{\infty} {a_m^{\phi}} = \ell + \int_{( 0, \infty )} {(e^t - 1) e^{-t} \mu(\dif t)} = \ell + \int_{( 0, \infty )} {(1 - e^{-t}) \mu(\dif t)} = \phi(1) = 1.
\end{equation*}
Let $\tau _n = R_1+\cdots +R_n$ be a random walk on $\mathbb{Z}_+$ with increments $R_i$ that are independent of the random walk $S_n$ and have the distribution given by $\bbP(R_1 = m) = a_m^{\phi}$.
A subordinate random walk is defined as $S^{\phi}_n := S_{\tau  _n}$, for all  $n \ge 0$. Such random walks were introduced in \cite{BSC} and later studied in papers \cite{BC1}, \cite{BC2}, \cite{Mimica}, \cite{BCT17}, \cite{MS18}, see also \cite{Deng}.
Notice that the one-step transition probability $p^{\phi}(1, x,y) $ of the random walk $S^{\phi}_n$ is of the form
\begin{align}\label{eq:distribution_of_X1}
	p^{\phi}(1, x, y) = \bbP^x(S^{\phi}_1 = y)
	 = \sum_{m = 1}^{\infty} {\bbP^x(S_{R_1} = y \mid R_1 = m)} a_m^{\phi} 
	 =  \sum\limits_{m = 1}^{\infty} p(m,x,y) a_m^{\phi},
\end{align}
where $p(n, x,y) = \bbP^x(S_n = y)$ stands for the $n$-step transition probability 
of the simple random walk $S_n$. We use the notation $p^{\phi}(n,x,x)=p^{\phi}(n,0)$ and $p^{\phi}(1,x,y)=p^{\phi}(x,y)=p^{\phi}(x-y)$.

In the course of study we always assume that $\phi$ is a complete Bernstein function. Recall that this means that the measure $\mu$ from \eqref{eq:def_of_phi} has a completely monotone density with respect to Lebesgue measure, see \cite[Def. 6.1.]{BFs_book}. We additionally require that $\phi$ has no drift term, that is $\ell = 0$ in \eqref{eq:def_of_phi}. Next assumption on the function $\phi$ is that it satisfies \textit{scaling condition} \eqref{eq:scaling}.
These assumptions will not be explicitly stated in the results.
%

\subsection{Auxiliary results}
We will repeatedly use the fact that 
\begin{equation}\label{eq:volume_growth}
	c' r^d \le \aps{B(x, r)} \le c'' r^d,\quad x\in \mathbb{Z}^d,
\end{equation}
for constants $c',c''>0$ which depend only on the dimension $d$.

We recall that for any Bernstein function $\phi$ it holds $\phi(\lambda t) \le \lambda \phi(t)$, for all $\lambda \ge 1$, $t > 0$, which implies
\begin{equation}\label{eq:phi(v)/phi(u)_le_v/u}
	\frac{\phi(v)}{\phi(u)} \le \frac{v}{u}, \quad 0 < u \le v.
\end{equation}

We formulate bounds for the inverse function $\phi^{-1}$ which easily follow from \eqref{eq:scaling} and take the form
\begin{equation}\label{eq:scaling_for_inverse}
	(1/c^*)^{1/\alpha^*} \left(\frac{R}{r}\right)^{1/\alpha^*} \le \frac{\phi^{-1}(R)}{\phi^{-1}(r)} \le (1/c_*)^{1/\alpha_*} \left(\frac{R}{r}\right)^{1/\alpha_*}, \quad 0 < r \le R \le 1.
\end{equation}

Throughout the paper we use the following decreasing function
\begin{align}\label{j_function}
j(r) = r^{-d} \phi(r^{-2}), \quad r> 0.
\end{align}
Notice that with this notation \eqref{eq:one_step_prob_estimates} becomes $p^{\phi}(1, x, y) \asymp j(\aps{x - y})$, $x \neq y$.

\begin{LM}\label{lm:Lemma21_from_CKW}
	There exists a constant $c_0 > 0$ such that
	\begin{equation*}
		\sum_{y \in B(x, r)^c} j(\aps{x - y}) \le c_0 \phi(r^{-2})
	\end{equation*}
	for every $x \in \bbZ^d$ and $r > 0$.
\end{LM}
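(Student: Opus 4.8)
The plan is to estimate the sum $\sum_{y \in B(x,r)^c} j(\aps{x-y})$ by grouping the points $y$ according to dyadic annuli around $x$ and using the volume growth bound \eqref{eq:volume_growth} together with the scaling condition \eqref{eq:scaling}. By translation invariance we may assume $x = 0$. First I would partition the complement $B_r^c$ into the annuli $A_k = \{ y \in \bbZ^d : 2^k r \le \aps{y} < 2^{k+1} r \}$ for $k \ge 0$, so that $B_r^c = \bigcup_{k \ge 0} A_k$. On $A_k$ the function $j$ is controlled from above by its value at the inner radius: since $j$ is decreasing, $j(\aps{y}) \le j(2^k r) = (2^k r)^{-d} \phi((2^k r)^{-2})$ for $y \in A_k$. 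Also $\aps{A_k} \le \aps{B(0, 2^{k+1}r)} \le c'' (2^{k+1} r)^d = c'' 2^d 2^{kd} r^d$ by \eqref{eq:volume_growth}.

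Combining these two facts gives
\begin{equation*}
	\sum_{y \in A_k} j(\aps{y}) \le c'' 2^d 2^{kd} r^d \cdot (2^k r)^{-d} \phi\big((2^k r)^{-2}\big) = c'' 2^d \, \phi\big((2^k r)^{-2}\big).
\end{equation*}
Summing over $k \ge 0$ reduces the problem to showing $\sum_{k \ge 0} \phi((2^k r)^{-2}) \le C \phi(r^{-2})$. Here the scaling condition enters decisively: for $k \ge 0$ we have $(2^k r)^{-2} \le r^{-2}$, and — after reducing to arguments in $(0,1]$, which is where \eqref{eq:scaling} is stated — the upper bound in \eqref{eq:scaling} yields $\phi((2^k r)^{-2}) / \phi(r^{-2}) \le c^* (2^{-2k})^{\alpha^*} = c^* 4^{-k \alpha^*}$. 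Since $\alpha^* > 0$, the geometric series $\sum_{k \ge 0} 4^{-k\alpha^*}$ converges, so $\sum_{k \ge 0} \phi((2^k r)^{-2}) \le \frac{c^*}{1 - 4^{-\alpha^*}} \phi(r^{-2})$, and we obtain the claim with $c_0 = c'' 2^d \cdot \frac{c^*}{1 - 4^{-\alpha^*}}$.

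The main obstacle — really the only subtlety — is that the scaling condition \eqref{eq:scaling} is only assumed for arguments in $(0,1]$, whereas $r$ is an arbitrary positive number, so $r^{-2}$ and the $(2^kr)^{-2}$ need not lie in that range. I would handle this by splitting into cases: if $r \ge 1$ then all the relevant arguments $(2^kr)^{-2}$ lie in $(0,1]$ and the argument above applies directly; if $r < 1$, then for the finitely many indices $k$ with $2^k r \le 1$ one still has $(2^kr)^{-2} \ge 1$, and for those terms I would instead use the elementary Bernstein-function bound \eqref{eq:phi(v)/phi(u)_le_v/u} (equivalently $\phi(\lambda t) \le \lambda \phi(t)$ for $\lambda \ge 1$) to compare $\phi((2^kr)^{-2})$ with $\phi(r^{-2})$; note $\phi((2^kr)^{-2}) = \phi(4^{-k} r^{-2}) \le \phi(r^{-2})$ directly since $4^{-k} \le 1$ and $\phi$ is increasing, so these finitely many terms are each bounded by $\phi(r^{-2})$ and their number is comparable to $\log_2(1/r)$... which is not bounded. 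A cleaner route for $r < 1$: simply observe that $\phi$ is increasing, so $\phi((2^kr)^{-2}) \le \phi(r^{-2})$ is useless for summation, and instead one should apply \eqref{eq:phi(v)/phi(u)_le_v/u} in the form $\phi(4^{-k}r^{-2}) \le 4^{-k}\phi(r^{-2})$ for all $k \ge 0$ (valid since $4^{-k} \le 1$), giving the convergent geometric series $\sum_k 4^{-k} = 4/3$ uniformly in $r$. Thus for \emph{every} $r > 0$ the bare Bernstein inequality \eqref{eq:phi(v)/phi(u)_le_v/u} already suffices for the sum over $k$, and the scaling condition is not even needed for this particular lemma; I would present the proof using \eqref{eq:phi(v)/phi(u)_le_v/u}, which keeps everything uniform in $r$ and avoids any case analysis.
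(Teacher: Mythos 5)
Your dyadic annulus decomposition, the use of the monotonicity of $j$, and the volume growth bound \eqref{eq:volume_growth} to reduce the problem to controlling $\sum_{k\ge 0}\phi\bigl((2^k r)^{-2}\bigr)$ all coincide with the paper's argument, and for $r\ge 1$ the scaling step is essentially right (though note it is the \emph{lower} estimate in \eqref{eq:scaling} — the one involving $c_*$ and $\alpha_*$ — that yields $\phi\bigl((2^k r)^{-2}\bigr)/\phi(r^{-2})\le c_*^{-1}4^{-k\alpha_*}$, not the upper estimate as written; either way the geometric series converges).

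The ``cleaner route'' you settle on at the end, however, inverts an inequality and does not work. For a Bernstein function, $\phi(\lambda t)\le \lambda\phi(t)$ holds for $\lambda\ge 1$, equivalently \eqref{eq:phi(v)/phi(u)_le_v/u} reads $\phi(v)/\phi(u)\le v/u$ for $u\le v$. Taking $u=4^{-k}r^{-2}\le v=r^{-2}$ gives $\phi(r^{-2})\le 4^k\phi(4^{-k}r^{-2})$, i.e.\ $\phi(4^{-k}r^{-2})\ge 4^{-k}\phi(r^{-2})$ --- a \emph{lower} bound on the summand, which is the opposite of what you need. (Concavity with $\phi(0)=0$ forces $s\mapsto\phi(s)/s$ to be decreasing, so shrinking the argument can only make $\phi(s)/s$ larger.) The scaling condition is genuinely needed: a Bernstein function whose lower Matuszewska index at $0$ is zero, e.g.\ one behaving like $1/\log(1/s)$ near the origin, makes $\sum_k\phi(4^{-k}r^{-2})$ diverge. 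The paper handles $r<1$ much more simply and without any series: on $\bbZ^d$ distinct points are at distance at least $1$, so $B(x,r)^c=B(x,1)^c$ for every $r\in(0,1)$, hence $\sum_{y\in B(x,r)^c}j(|x-y|)=\sum_{y\in B(x,1)^c}j(|x-y|)\le c_0\phi(1)\le c_0\phi(r^{-2})$ using the $r\ge1$ case and the monotonicity of $\phi$. You need this observation (or something equivalent) to close the $r<1$ case; the Bernstein bound alone cannot do it.
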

\begin{proof}
Assume that $r \ge 1$. By \eqref{eq:scaling}, we have
	\begin{align*}
		\sum_{y \in B(x, r)^c} j(\aps{x - y})
		& \le 
		\sum_{i = 0}^{\infty} \sum_{2^{i} r \le |x-y| < 2^{i+1}r }  j(2^i r) \\
		& \le  c'' 2^d \phi(r^{-2}) \sum_{i = 0}^{\infty} \frac{\phi((2^i r)^{-2})}{\phi(r^{-2})} 
		 \le c_0 \phi(r^{-2}).
	\end{align*}
If $r \in (0, 1)$ then $B(x, r)^c = B(x, 1)^c$. Therefore
	\begin{equation*}
		\sum_{y \in B(x, r)^c} j(\aps{x - y}) = \sum_{y \in B(x, 1)^c} j(\aps{x - y}) \le c_0 \phi(1^{-2}) \le c_0 \phi(r^{-2}),
	\end{equation*}
what finishes the proof.
\end{proof}

Next we prove a pair of useful estimates for the subordinate random walk.
\begin{LM}\label{lm:lower_bound_for_P(X_1=0)}
	There exists a constant $C_1 > 0$ such that 
\begin{align*}
p^{\phi}(x, x) \ge C_1,\quad x \in \bbZ^d.
\end{align*}	
\end{LM}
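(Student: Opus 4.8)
The plan is short: reduce to a single number by translation invariance, and then extract one explicit strictly positive term from the series representation of the one-step kernel.

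First I would observe that $S^\phi_n$ is obtained from the translation-invariant simple random walk $S_n$ via an independent time change, so $p^\phi(x,x)$ does not depend on $x$; hence it suffices to show $p^\phi(0,0)>0$ and then simply set $C_1:=p^\phi(0,0)$. To see positivity, start from \eqref{eq:distribution_of_X1} with $y=x=0$, i.e. $p^\phi(0,0)=\sum_{m=1}^{\infty}p(m,0,0)\,a_m^\phi$. All summands are non-negative, so $p^\phi(0,0)\ge p(2,0,0)\,a_2^\phi$. The first factor is the two-step return probability of the simple random walk, $p(2,0,0)=\tfrac1{2d}$ (take a step $\pm e_j$ and come back). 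The second factor is $a_2^\phi=\tfrac12\int_{(0,\infty)}t^2e^{-t}\,\mu(\dif t)$ by \eqref{eq:def_of_c_m^phi} with $\ell=0$, and this is strictly positive because $\mu$ is not the zero measure: otherwise \eqref{eq:def_of_phi} would force $\phi\equiv 0$, contradicting $\phi(1)=1$. Thus one may take $C_1:=\tfrac1{2d}\,a_2^\phi>0$.

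There is no genuine obstacle here; the only point to watch is the periodicity of the simple random walk, which makes $p(m,0,0)=0$ for odd $m$, so one must retain an even index (here $m=2$) when discarding terms. As an aside, the same conclusion can be extracted from the Fourier identity $p^\phi(0,0)=(2\pi)^{-d}\int_{[-\pi,\pi]^d}\bigl(1-\phi(1-d^{-1}\textstyle\sum_{j}\cos\theta_j)\bigr)\,\dif\theta$ together with continuity of $\phi$ at $0$ (the integrand is close to $1$ near $\theta=0$), but this route requires controlling the negative contributions of the integrand away from the origin, so the elementary argument above is preferable.
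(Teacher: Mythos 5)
Your proof is correct, and it is genuinely more elementary than the one in the paper. The paper's argument uses the local limit theorem $\bbP(S_{2m}=0)\asymp m^{-d/2}$ for the simple random walk, a lower bound on $a_{2m}^\phi$ in terms of $\phi((2m)^{-1})/(2m)$ (from \cite[Lemma 3.1]{MS18}), and the scaling condition \eqref{eq:scaling} to show that the \emph{entire} series $\sum_m p(m,0,0)\,a_m^\phi$ is bounded below by a convergent, strictly positive series. You instead keep only the single even index $m=2$, observe $p(2,0,0)=1/(2d)$ directly, and show $a_2^\phi>0$ from the representation \eqref{eq:def_of_c_m^phi} and the nontriviality of $\mu$ (which follows from $\ell=0$, $\phi(1)=1$). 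This avoids the local CLT, the lemma from \cite{MS18}, and the scaling condition entirely, and it even yields an explicit constant $C_1=a_2^\phi/(2d)$. You are right to flag the parity issue and retain an even index; note that $m=1$ would indeed fail since $p(1,0,0)=0$. The only caveat is that your proof gives a weaker piece of information than the paper's estimate (which in effect shows the whole return series is comparable to $\sum m^{-\alpha^*-d/2-1}$), but since the lemma is only ever invoked to produce a uniform positive lower bound on $p^\phi(x,x)$, your one-term bound is fully adequate for all uses in the paper.
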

\begin{proof}
	By \cite[Thm. 1.2.1]{La96}, 
	\begin{equation*}
		\bbP (S_{2m} = 0) \asymp m^{-d/2}, m\in \bbN.
	\end{equation*}
This and the fact that $\bbP (S_{2m - 1} = 0) = 0$ combined with \eqref{eq:distribution_of_X1}, \cite[Lemma 3.1.]{MS18} and \eqref{eq:scaling} yield for all $x \in \bbZ^d$
	\begin{align*}
		p^{\phi}(x, x)
		 \ge c_1 \sum_{m = 1}^{\infty} \frac{\phi((2m)^{-1})}{2m} m^{-d/2}  \nonumber 
		 \ge 
		\frac{c_1}{c^*2^{\alpha^*+1}} \sum_{m = 1}^{\infty}  m^{-\alpha^* - d/2 - 1} >0,
	\end{align*}
as desired.
\end{proof}

\subsection*{Estimates for probability of leaving a ball}
In this paragraph we establish the following result.
\begin{TM}\label{tm:maximal_inequality}
	There exists a constant $\gamma \in (0, 1)$ such that for all $r > 0$
	\begin{equation}\label{eq:maximal_inequality}
		\bbP^x \big(\max_{k \le \floor{\gamma / \phi(r^{-2})}} \aps{S^{\phi}_k - x} \ge r/2\big) \le 1/4.
	\end{equation}
\end{TM}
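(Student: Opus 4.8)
The plan is to reduce the maximal inequality to a one-step moment estimate and then apply a standard martingale/maximal argument. First I would control the displacement of a single step of the subordinate walk: using the one-step transition estimate \eqref{eq:one_step_prob_estimates}, equivalently $p^{\phi}(1,x,y)\asymp j(\aps{x-y})$, together with Lemma \ref{lm:Lemma21_from_CKW}, I would bound the truncated second moment $\bbE^x\big[\aps{S^{\phi}_1-x}^2\wedge \rho^2\big]$, or more conveniently the tail $\bbP^x(\aps{S^{\phi}_1-x}\ge s)=\sum_{y\in B(x,s)^c}p^{\phi}(x,y)\le c\,\phi(s^{-2})$ for $s\ge 1$. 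This is exactly the kind of estimate that feeds into a concentration inequality.

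Next I would invoke the concentration inequality of Pruitt \cite{Pr81} as announced in the introduction. The relevant statement bounds $\bbP^x(\max_{k\le n}\aps{S^{\phi}_k-x}\ge r)$ in terms of $n$ times a combination of the truncated first and second moments of one step at scale $r$, plus $n$ times the tail at scale $r$; schematically, with $h(r)$ a quantity comparable to $\phi(r^{-2})$ playing the role of the inverse characteristic time, one gets a bound of the form $C\,n\,h(r)$. Choosing $n=\floor{\gamma/\phi(r^{-2})}$ then makes this bound at most $C\gamma$, and picking $\gamma$ small enough (and absorbing the discrepancy between $h$ and $\phi(\cdot^{-2})$, which is harmless by \eqref{eq:scaling}) gives the desired $1/4$. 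One should treat $r\in(0,1)$ separately, but there $\floor{\gamma/\phi(r^{-2})}$ is bounded since $\phi(r^{-2})\ge\phi(1)=1$, so for $\gamma<1$ the floor is $0$ and the probability is $0$; hence only $r\ge 2$ (say) needs the genuine argument, and for $1\le r<2$ one can use a crude union bound.

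The step I expect to be the main obstacle is verifying that the moment inputs to Pruitt's inequality scale the right way, i.e. that the truncated variance $\sum_{\aps{y-x}<r}\aps{y-x}^2\,j(\aps{x-y})$ is comparable to $r^{2-d}\phi(r^{-2})\cdot r^d\sim r^2\phi(r^{-2})$ up to constants, uniformly in $r$. This requires a dyadic decomposition of the ball $B(x,r)$ combined with the upper scaling bound $\phi(R)/\phi(r)\le c^*(R/r)^{\alpha^*}$ with $\alpha^*<1$ to guarantee summability of $\sum 2^{i(2-2\alpha^*)}$ type series from the top scale downward — the condition $\alpha^*<1$ is exactly what makes $\aps{y-x}^2 j(\aps{x-y})$ have a convergent "sum toward $0$" when weighted correctly. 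Once these two pieces (tail bound $\lesssim\phi(r^{-2})$ and truncated variance $\lesssim r^2\phi(r^{-2})$) are in hand, the rest is bookkeeping: plug into Pruitt's bound, note every term is $\lesssim n\,\phi(r^{-2})$, set $n=\floor{\gamma/\phi(r^{-2})}$, and shrink $\gamma$.
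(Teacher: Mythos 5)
Your proposal matches the paper's proof almost exactly: both reduce the maximal inequality to Pruitt's concentration bound from Lemma~\ref{lm:Pruit_bound_general_form} and then dominate the Pruitt functional $h$ by $\phi(\cdot^{-2})$ (the paper's Lemma~\ref{lm:h_le_phi}), after disposing of the trivial case $r<1$ where the floor vanishes. One small simplification: your separate ``crude union bound'' for $1\le r<2$ is unnecessary, since $h(r/2)\le C\phi((r/2)^{-2})\le 4C\phi(r^{-2})$ by \eqref{eq:phi(v)/phi(u)_le_v/u} holds uniformly for $r\ge1$, so the main estimate already covers that range.
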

Our approach is based on the application of the concentration inequality from \cite[Lemma on page 949]{Pr81}. Since we use it at numerous occasions in the paper, we formulate it as a separate result.
\begin{LM}\label{lm:Pruit_bound_general_form}
	Let $(X_k)_{k \ge 1}$ be a sequence of independent random variables taking values in $\bbR^d$ and with the common distribution function denoted by $F$. Let $S_n = \sum_{i = 1}^n X_i$ be the corresponding random walk and set $M_n = \max_{k \le n} \aps{S_k}$. Define for $x > 0$
	\begin{equation*}
		G(x) = \bbP(\aps{X_1} > x), \quad K(x) = x^{-2} \int_{\aps{y} \le x} \aps{y}^2 dF(y),
	\end{equation*}
	\begin{equation*}
		M(x) = x^{-1} \APS{\int_{\aps{y} \le x} ydF(y)}, \quad	h(x) = G(x) + K(x) + M(x).
	\end{equation*}
	Then there is a constant $C > 0$, which depends only on the dimension $d$, such that for all $n \in \bbN$ and all $a > 0$
	\begin{equation*}
		\bbP(M_n \ge a) \le Cn h(a) \quad \mathrm{and}\quad \bbP(M_n \le a) \le \frac{C}{n h(a)}.
	\end{equation*}
\end{LM}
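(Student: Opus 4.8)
\textbf{Plan for the proof of Lemma \ref{lm:Pruit_bound_general_form}.}
The statement is a general concentration inequality for sums of i.i.d.\ random vectors, attributed to \cite[Lemma on page 949]{Pr81}. The natural approach is to adapt Pruitt's one-dimensional argument to $\bbR^d$; the whole proof rests on a single truncation and a second-moment (Chebyshev-type) estimate, carried out at a well-chosen scale. First I would fix $a > 0$, set the truncation level at $x = a$, and split each summand as $X_i = Y_i + Z_i$, where $Y_i = X_i \bbjedan_{\{\aps{X_i} \le a\}}$ and $Z_i = X_i \bbjedan_{\{\aps{X_i} > a\}}$. The ``large-jump'' part is controlled crudely: on the event that $M_n \ge a$ either some $Z_i \neq 0$, which happens with probability at most $n G(a)$ by a union bound, or the truncated walk $\sum Y_i$ itself gets large. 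So it remains to estimate the truncated walk, and the point of the definitions of $K$ and $M$ is exactly that $\bbE \aps{Y_1}^2 = a^2 K(a)$ and $\aps{\bbE Y_1} = a M(a)$.

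For the upper bound $\bbP(M_n \ge a) \le C n h(a)$: on the complement of $\{\exists i: Z_i \neq 0\}$ one has $S_k = \sum_{i \le k} Y_i$ for all $k \le n$, so $M_n \ge a$ forces $\max_{k \le n} \aps{\sum_{i\le k} Y_i} \ge a$. Centering, $\aps{\sum_{i \le k} Y_i} \le \aps{\sum_{i\le k}(Y_i - \bbE Y_i)} + k \aps{\bbE Y_1} \le \aps{\sum_{i\le k}(Y_i - \bbE Y_i)} + n a M(a)$. If $n M(a)$ is bounded (say $\le 1/2$, which one may assume, since otherwise $n h(a) \ge n M(a) > 1/2$ and the bound $\bbP(M_n \ge a) \le Cnh(a)$ is trivial for $C \ge 2$), the centered maximum must exceed $a/2$, and a maximal inequality for martingales (Kolmogorov/Doob applied coordinatewise, or Etemadi's inequality to avoid the martingale formalism) together with $\sum_{i \le n}\bbE\aps{Y_i - \bbE Y_i}^2 \le n \bbE\aps{Y_1}^2 = n a^2 K(a)$ gives $\bbP(\max_{k\le n}\aps{\sum_{i\le k}(Y_i-\bbE Y_i)} \ge a/2) \le C' n K(a)$. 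Collecting terms yields $\bbP(M_n \ge a) \le nG(a) + C' n K(a) \le C n h(a)$.

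For the lower bound $\bbP(M_n \le a) \le C/(n h(a))$: this is the classical Lévy/Paley--Zygmund-type anticoncentration step. Here the key observation is that $\{M_n \le a\} \subseteq \{M_{2n} \le 2a\}$ is not quite what one wants; instead one uses that on $\{M_n \le a\}$ the whole walk stays in $B(0,a)$, so in particular $\aps{S_n} \le a$, and one compares $S_n$ with a symmetrized or a ``last-exit'' decomposition. The cleanest route is: by the Hoffmann--Jørgensen / Ottaviani-type inequality, $\bbP(M_n > a) \ge \tfrac12 \bbP(\aps{S_n} > 2a)$ once $\max_k \bbP(\aps{S_n - S_k} > a) \le 1/2$; and a direct first-and-second-moment computation on the truncated walk (using $\bbE\aps{Y_1}^2 = a^2 K(a)$, $\aps{\bbE Y_1} = aM(a)$ and $G(a)$ to control the discarded part) shows that $\bbP(\aps{S_n} \le a)$ forces $n h(a)$ to be small; quantitatively $\bbP(\aps{S_n} \le a) \le C/(nh(a))$. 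The scaling is dimensional: all constants depend only on $d$ through the equivalence of norms and the coordinatewise maximal inequalities.

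The main obstacle is the lower bound: one has to rule out the walk lingering near the origin, which requires a genuine spread-out estimate rather than a union bound. The delicate point is choosing the truncation so that simultaneously (i) the discarded large jumps contribute at most a constant to the relevant probability, and (ii) the truncated increments have enough variance or drift (captured by $K(a) + M(a)$) to push the walk out of $B(0,a)$ in $n$ steps with probability bounded below by a multiple of $n h(a) \wedge 1$; then inverting gives the claimed $C/(nh(a))$. Since this is precisely the content of Pruitt's lemma, I would in the write-up either reproduce his short argument with the dimensional bookkeeping made explicit, or simply cite \cite{Pr81} and indicate that the passage from $\bbR$ to $\bbR^d$ is routine via comparison of the Euclidean norm with the coordinate norms.
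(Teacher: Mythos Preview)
The paper does not prove this lemma at all: it is stated without proof, introduced only as ``the concentration inequality from \cite[Lemma on page 949]{Pr81}'' which the authors ``formulate as a separate result'' because they use it repeatedly. Your final suggestion---to simply cite \cite{Pr81} and note that the passage from $\bbR$ to $\bbR^d$ is routine---is exactly what the paper does, so in that sense your proposal matches the paper's treatment.

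Your sketch of the actual argument (truncation at level $a$, union bound for the large-jump part, Kolmogorov/Doob or Etemadi for the centered truncated walk, and an Ottaviani-type step for the lower bound) is the standard route and is essentially Pruitt's own proof; it goes well beyond what the paper provides. If you want to include a proof in your write-up, that outline is sound; if you want to match the paper, a bare citation suffices.
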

Since the random walk $S^\phi_n$ is symmetric, the associated function $h$ is of the form
\begin{equation}\label{eq:def_of_h}
	h(x) = \bbP(\aps{S^{\phi}_1} > x)+ x^{-2} \int_{\aps{y} \le x} \aps{y}^2 dF(y),
\end{equation}
where $F$ is the distribution function of the random variable $S^{\phi}_1$. Before we prove Theorem \ref{tm:maximal_inequality} we show that under the scaling condition \eqref{eq:scaling} the function $h$ is dominated by the function $\phi $.

\begin{LM}\label{lm:h_le_phi}
In the above notation, there is a constant $C \ge 1$ such that
	\begin{equation*}
		h(x) \le C \phi(x^{-2}), \quad x > 0.
	\end{equation*}
\end{LM}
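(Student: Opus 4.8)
The plan is to split the bound $h(x)\le C\phi(x^{-2})$ into the two pieces appearing in \eqref{eq:def_of_h}, namely the tail term $\bbP(\aps{S^{\phi}_1}>x)$ and the truncated second moment term $x^{-2}\int_{\aps{y}\le x}\aps{y}^2\,dF(y)$, and to estimate each by a constant multiple of $\phi(x^{-2})$ using the one-step estimate \eqref{eq:one_step_prob_estimates} together with the scaling condition \eqref{eq:scaling}. For the tail term, we have $\bbP(\aps{S^{\phi}_1}>x)=\sum_{y\in B(0,x)^c}p^{\phi}(0,y)\asymp\sum_{y\in B(0,x)^c}j(\aps{y})$ (for $x$ large enough that the set is nonempty; for small $x$ one reduces to $B(0,1)^c$ as in the proof of Lemma \ref{lm:Lemma21_from_CKW}), and then Lemma \ref{lm:Lemma21_from_CKW} gives directly $\bbP(\aps{S^{\phi}_1}>x)\le c_0\phi(x^{-2})$. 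So the tail part is essentially immediate from the already-proved lemma.

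The main work is the truncated second moment term. First I would observe $x^{-2}\int_{\aps{y}\le x}\aps{y}^2\,dF(y)=x^{-2}\sum_{y\in B(0,x)}\aps{y}^2 p^{\phi}(0,y)\le c\,x^{-2}\sum_{y\in B(0,x),\,y\ne 0}\aps{y}^{2-d}\phi(\aps{y}^{-2})$, using \eqref{eq:one_step_prob_estimates}; the $y=0$ term contributes nothing. Then I would dyadically decompose the ball: writing the annuli $2^{-i-1}x\le\aps{y}<2^{-i}x$ for $i\ge 0$ down to radius $1$, each annulus contains $\lesssim (2^{-i}x)^d$ lattice points (by \eqref{eq:volume_growth}), and on it $\aps{y}^{2-d}\phi(\aps{y}^{-2})\asymp (2^{-i}x)^{2-d}\phi((2^{-i}x)^{-2})$ since $\phi$ is increasing. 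Hence the contribution of the $i$-th annulus to $x^{-2}\sum\aps{y}^{2-d}\phi(\aps{y}^{-2})$ is $\lesssim x^{-2}(2^{-i}x)^{2}\phi((2^{-i}x)^{-2})=4^{-i}\phi((2^{-i}x)^{-2})$. Summing over $i$ and invoking the \emph{upper} scaling bound in \eqref{eq:scaling}, $\phi((2^{-i}x)^{-2})/\phi(x^{-2})\le c^*(4^{i})^{\alpha^*}=c^*4^{i\alpha^*}$ (valid when the arguments are $\le 1$, i.e.\ $\aps{y}\ge 1$), we get $\sum_i 4^{-i}\phi((2^{-i}x)^{-2})\le c^*\phi(x^{-2})\sum_i 4^{-i(1-\alpha^*)}$, and the geometric series converges because $\alpha^*<1$. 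This yields the truncated-second-moment part $\le C\phi(x^{-2})$.

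The small-$x$ case (say $0<x<1$ or $x<2$) needs a separate, trivial treatment: for such $x$ the ball $B(0,x)$ contains only the origin (or a bounded number of points), so the second-moment term is bounded, while $\phi(x^{-2})\ge\phi(1)=1$, and one concludes $h(x)\le C$ for a fixed $C$; combined with the large-$x$ estimate this gives the claim for all $x>0$. I expect the main obstacle to be purely bookkeeping: carefully handling the endpoint of the dyadic decomposition (stopping at radius $\sim 1$ rather than $0$, since the scaling \eqref{eq:scaling} is only assumed for arguments in $(0,1]$ and the estimate \eqref{eq:one_step_prob_estimates} only for $x\ne y$) and making sure the innermost annulus and the origin are accounted for, but no genuinely hard analysis is involved beyond what Lemma \ref{lm:Lemma21_from_CKW} already packaged. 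Finally, taking $C\ge 1$ large enough to absorb both the tail constant $c_0$ and the second-moment constant, and to cover the bounded small-$x$ regime, completes the proof.
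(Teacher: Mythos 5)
Your argument is correct and takes essentially the same route as the paper: split $h$ into the tail and the truncated second moment, bound the tail via the one-step estimate \eqref{eq:one_step_prob_estimates} together with the spatial summation of $j$ (which is precisely the content of Lemma \ref{lm:Lemma21_from_CKW}), and bound the truncated second moment by combining \eqref{eq:one_step_prob_estimates} with the upper scaling bound in \eqref{eq:scaling} and summing over annuli, with the case $x<1$ handled separately. The paper compresses the tail step into an explicit integral comparison rather than citing Lemma~\ref{lm:Lemma21_from_CKW} and merely asserts the second-moment bound follows ``similarly,'' so your dyadic version fills in exactly the detail the paper omits; there is no substantive difference.
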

\begin{proof}
First observe that if $x\in (0,1)$ then $h(x) = \bbP(S^{\phi}_1 \neq 0)$ and whence the result follows. Assume next that $x\geq 1$. 
Using \eqref{eq:one_step_prob_estimates} and \eqref{eq:scaling} we get
	\begin{align*}
	\bbP(\aps{S^{\phi}_1} > x) &\le c_1 \sum_{\aps{y} > x} \aps{y}^{-d} \phi(\aps{y}^{-2}) 
		\le \frac{c_1}{c_*} \phi(x^{-2}) \sum_{\aps{y} > x} \aps{y}^{-d} \left(x/\aps{y}\right)^{2 \alpha_*} \\
		&\le c_2 x^{2 \alpha_*} \phi(x^{-2}) \int_x^{\infty} r^{-d - 2\alpha_*} r^{d - 1} \, dr 
		=c_3 \phi(x^{-2}).
	\end{align*}
We can similarly show that 
\begin{align*}
x^{-2} \int_{\aps{y} \le x} \aps{y}^2 dF(y)\le c_4 \phi(x^{-2})
\end{align*}
for some constant $c_4 > 0$ and the proof is finished.
\end{proof}

\begin{proof}[Proof of Theorem \ref{tm:maximal_inequality}]
	We first consider the case $r < 1$. Since $\phi$ is increasing and $\phi(1) = 1$, we have $\gamma / \phi(r^{-2}) < 1$, for any $\gamma \in (0, 1)$. Therefore
	\begin{equation*}
		\max_{k \le \floor{\gamma / \phi(r^{-2})}} \aps{S^{\phi}_k - x} = \aps{S^{\phi}_0 - x}
	\end{equation*}
	and thus for any $r < 1$ it holds
	\begin{equation*}
		\bbP^x \OBL{\max_{k \le \floor{\gamma / \phi(r^{-2})}} \aps{S^{\phi}_k - x} \ge r/2} 
		= 0.
	\end{equation*}
Assume that $r \ge 1$. Applying Lemma \ref{lm:Pruit_bound_general_form} we get
	\begin{equation}\label{Pruitt_bound}
		\bbP^x \OBL{\max_{k \le \floor{\gamma / \phi(r^{-2})}} \aps{S^{\phi}_k - x} \ge r/2} \le c_1 \floor{\gamma / \phi(r^{-2})} h(r/2),
	\end{equation}
	where $c_1$ depends only on the dimension $d$. By Lemma \ref{lm:h_le_phi} and \eqref{eq:phi(v)/phi(u)_le_v/u}, 
	\begin{equation*}
	 	\bbP^x \OBL{\max_{k \le \floor{\gamma / \phi(r^{-2})}} \aps{S^{\phi}_k - x} \ge r/2} \le 4 c_1 C \floor{\gamma / \phi(r^{-2})} \phi(r^{-2}) \le 4c_1 C \gamma.
	 \end{equation*}
	Choosing $\gamma = \frac{1}{2}\wedge \frac{1}{16c_1C}$ we obtain \eqref{eq:maximal_inequality} for all $r > 0$.
\end{proof}

\section{On-diagonal bounds}\label{sec:On-diag}
In this section we establish the on-diagonal bounds. We apply a Fourier analytic method which is extracted from \cite{BCT17}.

\begin{TM}\label{tm:on-diag_bounds}
For all $n \in \bbN$ it holds
	\begin{equation}\label{eq:on-diag_bounds}
		p^{\phi}(n, 0) \asymp \OBL{\phi^{-1}(n^{-1})}^{d/2}.
	\end{equation}
	\end{TM}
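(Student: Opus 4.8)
The plan is to use the Fourier-analytic representation of $p^\phi(n,0)$ and extract matching upper and lower bounds from the behaviour of the characteristic exponent of $S^\phi_1$ near the origin. Recall that by Fourier inversion on the torus $\mathbb{T}^d = [-\pi,\pi]^d$ we have
\begin{equation*}
p^\phi(n,0) = \frac{1}{(2\pi)^d} \int_{\mathbb{T}^d} \big(\widehat{p^\phi}(\theta)\big)^n \, d\theta,
\end{equation*}
where $\widehat{p^\phi}(\theta) = \sum_{x} p^\phi(1,0,x) e^{i x\cdot\theta}$. Since $I - P^\phi = \phi(I-P)$ and the simple random walk has characteristic function $\widehat p(\theta) = \frac{1}{d}\sum_{j=1}^d \cos\theta_j$, the spectral calculus gives the clean identity $\widehat{p^\phi}(\theta) = 1 - \phi\big(1 - \widehat p(\theta)\big)$. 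So everything reduces to understanding $\psi(\theta) := \phi(1-\widehat p(\theta))$, the characteristic exponent of the subordinate walk.

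First I would record the elementary bounds $1 - \widehat p(\theta) \asymp |\theta|^2$ for $\theta \in \mathbb{T}^d$ (with the equivalence constants depending only on $d$), which is standard for the simple random walk. Combined with the scaling condition \eqref{eq:scaling} — in the monotone form and using that $\phi$ is increasing — this yields $\psi(\theta) \asymp \phi(|\theta|^2)$ on the torus, and in particular $0 \le \widehat{p^\phi}(\theta) \le 1 - c\,\phi(|\theta|^2)$ near the origin together with the bound $|\widehat{p^\phi}(\theta)| \le 1 - \delta$ away from the origin (the latter because $\widehat p(\theta)$ is bounded away from $1$ there and $\phi(0)=0$, $\phi$ increasing; one also needs $|\widehat{p^\phi}(\theta)| < 1$, which holds since the walk is aperiodic). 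The contribution of the region $\{|\theta| \ge \varepsilon\}$ to the inversion integral is then exponentially small in $n$, hence negligible against the target $(\phi^{-1}(n^{-1}))^{d/2}$, which decays only polynomially (by \eqref{eq:scaling_for_inverse}).

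For the main region $\{|\theta| < \varepsilon\}$ I would use $\big(1-\phi(|\theta|^2)\big)^n \le e^{-n\phi(|\theta|^2)/C}$ for the upper bound and $\big(1 - \phi(|\theta|^2)\big)^n \ge e^{-c n \phi(|\theta|^2)}$ on a possibly smaller ball (using $1-u \ge e^{-cu}$ for $u$ small) for the lower bound, reducing matters to estimating $\int_{\mathbb{R}^d} e^{-t\, n\,\phi(|\theta|^2)}\, d\theta$ for fixed constants $t$. Passing to polar coordinates and substituting $s = \phi(|\theta|^2)$, i.e. $|\theta| = \sqrt{\phi^{-1}(s)}$, turns this into a one-dimensional integral; the scaling bounds \eqref{eq:scaling_for_inverse} on $\phi^{-1}$ guarantee the integral converges and is comparable to $(\phi^{-1}(1/n))^{d/2}$ — the substitution at the threshold $s \sim 1/n$ is exactly where the factor $(\phi^{-1}(n^{-1}))^{d/2}$ is produced. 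The main obstacle is the careful bookkeeping in this change of variables: one must verify that the Jacobian factor $\frac{d}{ds}(\phi^{-1}(s))^{d/2}$ behaves like $s^{-1}(\phi^{-1}(s))^{d/2}$ up to constants (a consequence of the $O$-regular variation encoded in \eqref{eq:scaling}, not of smoothness), and that the tail $s \gtrsim 1$ and the small-$s$ part both contribute only a constant multiple of the value at $s \sim 1/n$. For small $n$ the claimed asymptotic must also be checked directly, but since $p^\phi(n,0)$ is bounded below by a positive constant for each fixed $n$ (e.g. from Lemma \ref{lm:lower_bound_for_P(X_1=0)} and submultiplicativity) and $(\phi^{-1}(n^{-1}))^{d/2}$ is likewise bounded above and below on any finite range, the equivalence for $n \ge n_0$ extends to all $n$.
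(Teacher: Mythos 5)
Your proof follows the paper's Fourier-analytic strategy closely up to the final integral estimate: Fourier inversion on $[-\pi,\pi)^d$, exponential smallness of the region $\{|\theta|\ge\varepsilon\}$, and the reduction on $\{|\theta|<\varepsilon\}$ to sandwiching $(1-\phi(1-\Psi(\theta)))^n$ between $e^{-Cn\phi(|\theta|^2)}$ and $e^{-cn\phi(|\theta|^2)}$. The divergence, and the gap, lies in how you then evaluate $\int_{|\theta|<\varepsilon}e^{-cn\phi(|\theta|^2)}\,d\theta$. The paper applies the linear change of variables $\theta=a_n\xi$ with $a_n=(\phi^{-1}(n^{-1}))^{1/2}$, whose Jacobian $a_n^d$ is exactly the target $(\phi^{-1}(n^{-1}))^{d/2}$, and then uses \eqref{eq:scaling} once more to sandwich $n\phi(1-\Psi(a_n\xi))$ between powers of $|\xi|$, leaving an integral bounded above and below by constants. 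You instead propose polar coordinates plus the nonlinear substitution $s=\phi(r^2)$, which makes the derivative of $\phi^{-1}$ enter.

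That is where the argument breaks. The Jacobian estimate $\frac{d}{ds}(\phi^{-1}(s))^{d/2}\asymp s^{-1}(\phi^{-1}(s))^{d/2}$ does \emph{not} follow from the $O$-regular variation of $\phi^{-1}$ recorded in \eqref{eq:scaling_for_inverse}: that scaling condition bounds ratios $\phi^{-1}(R)/\phi^{-1}(r)$, not pointwise derivatives, and one can build increasing functions satisfying two-sided scaling whose derivative vanishes on a set of positive measure (a piecewise-linear staircase with $g(2^{-k})=2^{-k}$ and increasingly long flat stretches already gives a counterexample). What actually saves the step is the Bernstein structure together with scaling: since $\phi$ is concave with $\phi(0)=0$, one has $\phi'(\lambda)\le\phi(\lambda)/\lambda$; and because $\phi'$ is decreasing, for any fixed $M$ with $c_*M^{\alpha_*}>1$ and $M\lambda\le 1$ one gets $(M-1)\lambda\phi'(\lambda)\ge\phi(M\lambda)-\phi(\lambda)\ge(c_*M^{\alpha_*}-1)\phi(\lambda)$, hence $\phi'(\lambda)\asymp\phi(\lambda)/\lambda$ and then $(\phi^{-1})'(s)=1/\phi'(\phi^{-1}(s))\asymp\phi^{-1}(s)/s$. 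Simpler still, and truer in spirit to the paper, is to avoid the substitution: split $\int_0^\varepsilon e^{-cn\phi(r^2)}r^{d-1}\,dr$ at $r=a_n$; on $[0,a_n]$ the exponent is $O(1)$, contributing $\asymp a_n^d$, while on $[a_n,\varepsilon]$ the lower bound in \eqref{eq:scaling} gives $n\phi(r^2)\ge c_*(r/a_n)^{2\alpha_*}$, so $u=r/a_n$ yields a convergent integral times $a_n^d$. Your treatment of the complementary region and of small $n$ is fine.
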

\begin{proof}
Let $\Psi$ be the characteristic function of the simple random walk $S$. Then the characteristic function of $S^\phi$ is $\Psi^{\phi}(\theta) = 1 - \phi(1 - \Psi(\theta))$, see \cite{BSC}. Thus, by the Fourier inversion formula,
	\begin{equation}\label{eq:P(Xn=0)-Fourier}
		p^{\phi}(n, 0)  = \frac{1}{(2\pi)^d} \int_{\calD_d} (1 - \phi(1 - \Psi(\theta)))^n d\theta,
	\end{equation}
	where $\calD_d = [-\pi, \pi)^d$.
We fix $\varepsilon >0$	 
and first we estimate the integral in \eqref{eq:P(Xn=0)-Fourier} over the set $\calD_d^{\varepsilon} := \{\theta \in \calD_d : \aps{\theta} \ge \varepsilon\}$. Since $\aps{1 - \phi(1 - \Psi(\theta))} = 1 $ if and only if $\theta \in 2\pi \bbZ^d$, see \cite[Claim 2]{BCT17}, it holds that $\aps{1 - \phi(1 - \Psi(\theta))} < 1 - \eta$ for all $\theta \in \calD_d^{\varepsilon}$ and for some $\eta \in (0, 1)$. 
Hence   
	\begin{equation*}
		\frac{1}{(2\pi)^d} \int_{\calD_d^{\varepsilon}} \aps{1 - \phi(1 - \Psi(\theta))}^n d\theta \le (1 - \eta)^n.
	\end{equation*}

Next, we consider the remaining part of the integral in \eqref{eq:P(Xn=0)-Fourier}, that is over the ball $B_\varepsilon $. We set $a_n = \OBL{\phi^{-1}(n^{-1})}^{1/2}$ and by the change of variable we get
	\begin{equation*}
		a_n^{-d} \int_{\aps{\theta} < \varepsilon} \OBL{1 - \phi(1 - \Psi(\theta))}^n d\theta = \int_{\aps{\xi} < \varepsilon / a_n} \OBL{1 - \phi(1 - \Psi(a_n \xi))}^n d\xi.
	\end{equation*}
To finish the proof we need to show that for some $c_1,c_2>0$
\begin{align}\label{eq:change_of_variable}
c_1\le \int_{\aps{\xi} < \varepsilon / a_n} \OBL{1 - \phi(1 - \Psi(a_n \xi))}^n d\xi \le c_2.
\end{align}
 Notice that it suffices to prove \eqref{eq:change_of_variable} only for $n$ large enough, as the integrand in  \eqref{eq:change_of_variable}  is strictly positive if $\varepsilon$ is small enough, and thus in the end of the proof we can change constants appropriately to estimate the  expression in \eqref{eq:P(Xn=0)-Fourier} for all $n$.
 
We observe that
\begin{align}\label{al:lim=1/2}
\lim_{n \to \infty} \frac{1 - \Psi(a_n \xi)}{\aps{a_n \xi}^2 / d} = \frac{1}{2}.
\end{align}
Indeed, this follows easily from the fact that 
	\begin{equation*}
		\Psi(\theta) = \frac{1}{d} \sum_{m = 1}^d \cos(\theta_m), \quad \theta = (\theta_1, \theta_2, \ldots, \theta_d)
	\end{equation*}
	and, for some $c_3 > 0$ and for all $x \in \bbR$,
		\begin{equation*}
		\aps{1 - \cos(x) - x^2 / 2} \le c_3 x^4.
	\end{equation*}
	
We next prove that for some $c_4, c_5>0$ and for all $n\in \mathbb{N}$
	\begin{equation}\label{eq:n_phi_final}
		c_4 \left( \aps{\xi}^{2\alpha_*} \wedge \aps{\xi}^{2\alpha^*}\right) \le n \phi(1 - \Psi(a_n \xi)) 
		\le c_5 \left( \aps{\xi}^{2\alpha_*}\vee \aps{\xi}^{2\alpha^*}\right).
	\end{equation}
For that we establish the following simple result.
	\begin{Claim}\label{cl:phi_over_phi}
		Let $(a_n)$ and $(b_n)$ be two sequences of positive numbers both tending to zero and such that $\lim_{n \to \infty} (a_n / b_n) = 1$. Then there exists a constant $c_6 > 0$ such that
		\begin{equation}\label{eq:phi_over_phi}
			c_6^{-1} \le \frac{\phi(a_n)}{\phi(b_n)} \le c_6, \quad  n\in \bbN.
		\end{equation}
	\end{Claim}
\noindent
	\textit{Proof of Claim \ref{cl:phi_over_phi}}. Scaling condition \eqref{eq:scaling} implies that, for some $c_7>0$,
	\begin{equation*}
		c_7^{-1} \left( (x/y)^{\alpha_*} \wedge (x/y)^{\alpha^*}\right)
		\le 
		\frac{\phi (x)}{\phi (y)}
		 \le c_7 \left( (x/y)^{\alpha_*} \vee (x/y)^{\alpha^*}\right), \quad  x, y \in (0, 1).
	\end{equation*}
	With this inequality it is straightforward to obtain \eqref{eq:phi_over_phi}.\\
\vspace*{0,1cm}

\noindent By Claim \ref{cl:phi_over_phi} and \eqref{al:lim=1/2},
	\begin{equation*}
		c_8^{-1} \le \frac{\phi\OBL{1 - \Psi(a_n \xi)}}{\phi\OBL{\aps{a_n \xi}^2 / 2d}} \le c_8
	\end{equation*}
and whence
	\begin{equation}\label{eq:n_phi_part-1}
		n \phi(1 - \Psi(a_n \xi)) = \frac{\phi\OBL{1 - \Psi(a_n \xi)}}{\phi\OBL{\aps{a_n \xi}^2 / 2d}} \frac{\phi\OBL{\aps{a_n \xi}^2 / 2d}}{n^{-1}} \asymp \frac{\phi\OBL{a_n^2 \aps{\xi}^2 / 2d}}{\phi(a_n^2)}.
	\end{equation}
Applying scaling condition \eqref{eq:scaling} in \eqref{eq:n_phi_part-1} we get \eqref{eq:n_phi_final} .

Next, we notice that
	\begin{equation*}
 \lim_{n \to \infty} \frac{n \log\OBL{1 - \phi(1 - \Psi(a_n \xi)}}{-n \phi\OBL{1 - \Psi(a_n\xi)}} = 1.
	\end{equation*}
Thus, by \eqref{eq:n_phi_final}, for $n$ large enough,
	\begin{align*}
\int_{\aps{\xi} < \varepsilon/a_n} \!\!\!\! \!\!\!\! e^{-c_9 \left( \aps{\xi}^{2\alpha_*}\vee \aps{\xi}^{2\alpha^*}\right)}d\xi 
 \le 
 \int_{\aps{\xi} < \varepsilon/a_n} \!\!\!\!  \!\! \OBL{1 - \phi(1 - \Psi(a_n \xi))}^n d\xi  
 \le 
		\int_{\aps{\xi} < \varepsilon/a_n} \!\!\!\!  \!\!\!\! e^{-c_{10} \left( \aps{\xi}^{2\alpha_*}\wedge \aps{\xi}^{2\alpha^*}\right)} d\xi.
\end{align*}
Since both of the side integrals converge to positive constants as $n$ goes to infinity, we conclude that \eqref{eq:change_of_variable} is valid for $n$ large enough and the proof is finished.
\end{proof}
\begin{COR}\label{rem:off-diag_bd_from_on-diag}
There is a constant $c > 0$ such that
	\begin{equation*}
		p^{\phi}(n, x, y) \le c \OBL{\phi^{-1}(n^{-1})}^{d/2},\quad \mathrm{for}\ n \in \bbN\ \mathrm{and}\ x, y \in \bbZ^d.
	\end{equation*}
\end{COR}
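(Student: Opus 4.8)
The plan is to obtain this off-diagonal upper bound from the on-diagonal estimate of Theorem \ref{tm:on-diag_bounds} by the classical Cauchy--Schwarz argument, combined with the scaling of $\phi^{-1}$ recorded in \eqref{eq:scaling_for_inverse}.

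First I would record that, since the increments of $S^\phi_n$ are symmetric, the kernel $p^\phi$ is symmetric and translation invariant, so $p^\phi(2n,x,x)=p^\phi(2n,0)$ for every $x\in\bbZ^d$. Then, for even times, the Chapman--Kolmogorov identity together with symmetry and the Cauchy--Schwarz inequality gives
\[
p^\phi(2n,x,y)=\sum_{z\in\bbZ^d}p^\phi(n,x,z)\,p^\phi(n,y,z)\le\Big(\sum_{z}p^\phi(n,x,z)^2\Big)^{1/2}\Big(\sum_{z}p^\phi(n,y,z)^2\Big)^{1/2}=p^\phi(2n,0),
\]
and hence $p^\phi(2n,x,y)\le p^\phi(2n,0)\le c_1\big(\phi^{-1}((2n)^{-1})\big)^{d/2}$ by Theorem \ref{tm:on-diag_bounds}.

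Next I would reduce odd times to even ones. For $n=2m+1$ with $m\ge1$, one more application of Chapman--Kolmogorov yields $p^\phi(2m+1,x,y)=\sum_{z}p^\phi(2m,x,z)\,p^\phi(1,z,y)\le\sup_{z}p^\phi(2m,x,z)\le c_1\big(\phi^{-1}((2m)^{-1})\big)^{d/2}$, where we used $\sum_z p^\phi(1,z,y)=1$ and the even-time bound just established; the case $n=1$ is trivial since $p^\phi(1,x,y)\le1$.

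Finally, to pass from the argument $(2n)^{-1}$ (resp.\ $(2m)^{-1}$) to $n^{-1}$, I would invoke \eqref{eq:scaling_for_inverse}: since $n^{-1}$ and $(2n)^{-1}$ (resp.\ $(2m+1)^{-1}$ and $(2m)^{-1}$) are comparable with an absolute ratio, \eqref{eq:scaling_for_inverse} gives $\phi^{-1}((2n)^{-1})\asymp\phi^{-1}(n^{-1})$ uniformly in $n$; absorbing these comparison constants, together with the value at $n=1$, into a single constant $c$ yields the assertion for all $n\in\bbN$ and $x,y\in\bbZ^d$. There is no genuine obstacle in this corollary; the only steps needing a line of care are the parity reduction and the harmless change of the time argument via \eqref{eq:scaling_for_inverse}.
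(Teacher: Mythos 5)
Your argument is the standard Chapman--Kolmogorov/Cauchy--Schwarz reduction to the on-diagonal bound, which is exactly the route the paper takes (its proof is the one-liner ``Theorem~\ref{tm:on-diag_bounds} combined with the Cauchy--Schwarz inequality''); your write-up simply makes the parity reduction and the harmless time-shift via \eqref{eq:scaling_for_inverse} explicit, and those details are correct. Note the even-time case already lands on $\big(\phi^{-1}((2n)^{-1})\big)^{d/2}$ with the matching time argument, so only the odd-time step genuinely uses \eqref{eq:scaling_for_inverse}.
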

\begin{proof}
This follows by Theorem \ref{tm:on-diag_bounds} combined with the Cauchy-Schwarz inequality. 
\end{proof}

\section{Parabolic Harnack inequality}\label{sec:Parab}
In this section we prove the parabolic Harnack inequality which is  the main tool that we will use to obtain off-diagonal bounds in Sections \ref{sec:lower} and \ref{sec:off-diagonal_bounds}. We follow closely the elegant approach of \cite{BL02} but we emphasize that for the case that we undertake in the paper it requires numerous adjustments and alterations.

 Let $\calP = \bbN_0 \times \bbZ^d$ and consider the $\calP$-valued Markov chain $(V_k, S^{\phi}_k)_{k \ge 0}$, where $V_k = V_0 + k$. We write $\bbP^{(j, x)}$ for the law of $(V_k, S^{\phi}_k)$ when it starts from $(j, x)$ and we set $\calF_j = \sigma\{(V_k, S^{\phi}_k) : k \le j\}$. 
A bounded function $q$ defined on $\calP$ is called \textit{parabolic} on a subset $D \subseteq \calP$ if $q(V_{k \wedge \tau_D}, S^{\phi}_{k \wedge \tau_D})$ is a martingale, where $\tau_D$ denotes the exit time of the Markov chain $(V_k, S^{\phi}_k)$ from the set $D$. 
We now prove the following important observation.
\begin{LM}\label{lm:hk_is_parabolic_function}
	For each $n_0 \in \bbN$ and $x_0 \in \bbZ^d$ the function $q(k, x) = p^{\phi}(n_0 - k, x, x_0)$ is parabolic on the set $\{0, 1, 2, \ldots, n_0\} \times \bbZ^d$.
\end{LM}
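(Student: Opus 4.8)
The plan is to verify directly that $q(V_{k\wedge\tau_D},S^\phi_{k\wedge\tau_D})$ is a martingale, where $D=\{0,1,\dots,n_0\}\times\bbZ^d$. Since $q$ is bounded (as $p^\phi(n_0-k,x,x_0)\le 1$) and the Markov chain is time-homogeneous, it suffices by the Markov property and the optional-stopping structure to check the one-step identity off the boundary: for every $(k,x)$ with $k<n_0$,
\begin{equation*}
\bbE^{(k,x)}\big[q(V_1,S^\phi_1)\big]=q(k,x),
\end{equation*}
i.e. $\sum_{y\in\bbZ^d}p^\phi(1,x,y)\,p^\phi(n_0-(k+1),y,x_0)=p^\phi(n_0-k,x,x_0)$. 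This is nothing but the Chapman–Kolmogorov equation for the transition probabilities $p^\phi$, which holds because $S^\phi_n$ is a (time-homogeneous) random walk with i.i.d.\ increments; concretely, $p^\phi(n_0-k,x,x_0)=\sum_y\bbP^x(S^\phi_1=y)\bbP^y(S^\phi_{n_0-k-1}=x_0)$ by conditioning on the first step. The interchange of sum and expectation is justified by nonnegativity (Tonelli) or by boundedness and $\sum_y p^\phi(1,x,y)=1$.

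Next I would assemble these one-step identities into the full martingale property. Let $M_k:=q(V_{k\wedge\tau_D},S^\phi_{k\wedge\tau_D})$. On the event $\{\tau_D\le k\}$ we have $M_{k+1}=M_k$ trivially, so it remains to treat $\{\tau_D>k\}$, which is $\calF_k$-measurable; on this event $V_k=V_0+k<n_0$ (using $V_0=0$, or more generally that the chain starts inside $D$), so the one-step identity above applies and, by the Markov property at time $k$,
\begin{equation*}
\bbE^{(j,x)}\big[M_{k+1}\mid\calF_k\big]\bbjedan_{\{\tau_D>k\}}=q(V_k,S^\phi_k)\bbjedan_{\{\tau_D>k\}}=M_k\bbjedan_{\{\tau_D>k\}}.
\end{equation*}
Combining the two cases gives $\bbE[M_{k+1}\mid\calF_k]=M_k$, and integrability is immediate from $0\le M_k\le 1$. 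Hence $q$ is parabolic on $D$.

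I do not expect a genuine obstacle here: the statement is essentially a repackaging of Chapman–Kolmogorov together with the definition of "parabolic." The only point requiring a little care is the bookkeeping at the time boundary $k=n_0$ — one must observe that the stopping time $\tau_D$ incorporates the exit through the temporal slice $\{V_k=n_0\}$, so the martingale identity is only ever tested at space-time points with time coordinate strictly less than $n_0$, precisely where $q(k,\cdot)=p^\phi(n_0-k,\cdot,x_0)$ is a bona fide transition probability and Chapman–Kolmogorov is available. Once that is noted, the verification is routine.
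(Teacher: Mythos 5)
Your proof is correct and takes essentially the same route as the paper: the key step is exactly the Markov property combined with Chapman--Kolmogorov (what the paper calls the semigroup relation) to obtain the one-step identity $\bbE^{(k,x)}[q(V_1,S^\phi_1)]=q(k,x)$ for $k<n_0$. The additional bookkeeping you supply about the stopped process $M_k=q(V_{k\wedge\tau_D},S^\phi_{k\wedge\tau_D})$ is a harmless (and slightly more careful) elaboration of what the paper leaves implicit.
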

\begin{proof}
By the Markov property,
\begin{align*}
	\bbE[q(V_{k + 1}, S^{\phi}_{k + 1}) \mid \calF_k]
	& = \bbE^{(V_k, S^{\phi}_k)} [p^{\phi}(n_0 - V_1, S^{\phi}_1, x_0)] \\
	& = \sum_{x \in \bbZ^d} p^{\phi}(1, S^{\phi}_k, x) p^{\phi}(n_0 - V_k - 1, x, x_0) 
	 = q(V_k, S^{\phi}_k),
\end{align*}
where the last equality follows by the semigroup relation.
\end{proof}
We introduce the notation
\begin{equation*}
	Q(k, x, r) = \{k, k +1, \ldots, k + \floor{\gamma / \phi(r^{-2})}\} \times B(x, r),
\end{equation*}
where $\gamma$ is the constant from Theorem \ref{tm:maximal_inequality}. We fix the following two constants
\begin{equation}\label{eq:def_of_B_and_b}
	B =  3\vee (2/c_*)^{1/2\alpha_*}, \quad \quad \quad b = 3\vee \big(\floor{(3/c_*)^{1/\alpha_*}}+1\big).
\end{equation}
The main result of this section is the following theorem.
\begin{TM}\label{tm:PHI}
	There exists a constant $C_{PH} > 0$ such that for every non-negative, bounded function $q$ on $\calP$ which is parabolic on the set $ \{0, 1, 2, \ldots, \floor{\gamma / \phi((\sqrt{b} R)^{-2})}\} \times \bbZ^d$, the following inequality holds
	\begin{equation}\label{eq:PHI}
		\max_{(k, y) \in Q(\floor{\gamma / \phi(R^{-2})}, z, R/B)} q(k, y) \le C_{PH} \min_{w \in B(z, R/B)} q(0, w)
	\end{equation}
	for all $z \in \bbZ^d$ and for $R$ large enough.
\end{TM}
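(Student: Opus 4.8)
The plan is to adapt the Bass--Levin scheme of \cite{BL02}. By homogeneity we may normalise so that $M := \max_{(k,y) \in Q(\floor{\gamma/\phi(R^{-2})}, z, R/B)} q(k, y) = 1$, attained at some point $(k_1, y_1)$ with $y_1 \in B(z, R/B)$; the goal then becomes to prove $q(0, w) \ge c$ for every $w \in B(z, R/B)$, with $c$ an absolute constant. Two quantitative lemmas do the work, and combining them is immediate. The constants $b$ and $B$ in \eqref{eq:def_of_B_and_b} are tuned precisely so that the various nested space-time boxes below fit together, and the hypothesis that $q$ is parabolic on the \emph{longer} time interval $\{0, \ldots, \floor{\gamma/\phi((\sqrt b R)^{-2})}\} \times \bbZ^d$ is what leaves room for the chain to run.

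\emph{A hitting estimate.} Using the sharp one-step bound $p^{\phi}(1, x, y) \asymp j(\aps{x - y})$, the monotonicity of $j$, \eqref{eq:scaling} and \eqref{eq:volume_growth}, one checks that for any $x \in B(z, \sqrt b R)$ and any $A \subseteq B(z, bR) \setminus B(z, R)$,
\begin{equation*}
	\bbP^x(S^{\phi}_1 \in A) \ge c_1 \aps{A}\, j(bR) \ge c_2\, \frac{\aps{A}}{\aps{B(z, R)}}\, \phi(R^{-2}),
\end{equation*}
so one step already lands in $A$ with probability of order (relative size of $A$) $\times$ (reciprocal time scale). Feeding this into a geometric-sum argument over $\asymp 1/\phi((\sqrt b R)^{-2})$ steps, while using Theorem \ref{tm:maximal_inequality} to guarantee that with probability $\ge 3/4$ the chain has not yet left $B(z, \sqrt b R)$ (so the long jump into $A$ remains available) and Lemma \ref{lm:lower_bound_for_P(X_1=0)} to see that nothing is lost by ``waiting'' between jumps, I obtain: for every $w \in B(z, R/B)$ and every space-time set $\widetilde A$ lying in a cylinder of the form $\{j_0, \ldots, j_0 + \floor{\delta/\phi(R^{-2})}\} \times (B(z, bR) \setminus B(z, R))$,
\begin{equation*}
	\bbP^{(0, w)}\big( \sigma_{\widetilde A} < \tau \big) \ge c_3\, \frac{\aps{\widetilde A}}{\floor{\delta/\phi(R^{-2})}\, \aps{B(z, R)}},
\end{equation*}
where $\sigma_{\widetilde A}$ is the hitting time of $\widetilde A$ by $(V_k, S^{\phi}_k)$ and $\tau$ is the exit time of the space-time cylinder $\{0, \ldots, \floor{\gamma/\phi((\sqrt b R)^{-2})}\} \times B(z, \sqrt b R)$, on which $q$ is parabolic. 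Since $q(V_{k \wedge \tau}, S^{\phi}_{k \wedge \tau})$ is then a martingale, optional stopping gives $q(0, w) = \bbE^{(0, w)}[q(V_\tau, S^{\phi}_\tau)] \ge (\min_{\widetilde A} q)\, \bbP^{(0, w)}(\sigma_{\widetilde A} < \tau)$; here it matters that the state space is discrete, so $(V_{\sigma_{\widetilde A}}, S^{\phi}_{\sigma_{\widetilde A}})$ genuinely belongs to $\widetilde A$ and no upper bound on $q$ enters.

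\emph{A level-set estimate.} To produce $\widetilde A$, fix $\delta \in (0, \gamma/2]$ and write, for each integer $m$ with $\floor{\delta/\phi(R^{-2})} \le m \le 2\floor{\delta/\phi(R^{-2})}$,
\begin{equation*}
	1 = q(k_1, y_1) = \sum_{y \in \bbZ^d} p^{\phi}(m, y_1, y)\, q(k_1 - m, y).
\end{equation*}
By Theorem \ref{tm:maximal_inequality} the part of this sum over $y \notin B(y_1, R/2)$ is at most $1/4$, while on $B(y_1, R/2)$ Corollary \ref{rem:off-diag_bd_from_on-diag} together with the scaling identity $\phi^{-1}(\phi(R^{-2})/\delta) \asymp R^{-2}$ (from \eqref{eq:scaling_for_inverse}, valid once $R$ is large) gives $p^{\phi}(m, y_1, y) \le c_4 R^{-d}$. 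Summing over the admissible $m$, splitting the resulting double sum according to whether $q \ge 1/2$ or not, and bounding $q$ on the whole region by $\mathcal M := \sup\{ q(k, y) : 0 \le k \le \floor{\gamma/\phi((\sqrt b R)^{-2})},\ y \in B(z, R) \}$, I arrive at
\begin{equation*}
	\big| \{ (m, y) : \floor{\delta/\phi(R^{-2})} \le m \le 2\floor{\delta/\phi(R^{-2})},\ y \in B(y_1, R/2),\ q(k_1 - m, y) \ge 1/2 \} \big| \ge c_5\, \frac{\floor{\delta/\phi(R^{-2})}\, R^d}{\mathcal M}.
\end{equation*}
After the time change $k = k_1 - m$ this is a space-time set $\widetilde A$ of exactly the kind required above, of relative measure $\gtrsim 1/\mathcal M$, on which $q \ge 1/2$; plugging it into the hitting estimate yields $\min_{w \in B(z, R/B)} q(0, w) \ge c_6/\mathcal M$.

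It remains to replace $\mathcal M$ by an absolute constant, and this is where I expect the real difficulty to lie. The a priori bound on $q$ furnished by the hypothesis is not quantitative, so one must argue as in \cite{BL02} via a Krylov--Safonov type iteration over a geometrically growing family of space-time boxes: if $\mathcal M$ (taken over the largest box) were much bigger than the quantity we are bounding below, one could locate points realising values close to $\mathcal M$ in ever smaller sub-boxes, and the ``point $\to$ level set $\to$ infimum'' mechanism above would then force $q$ to grow unboundedly along such a sequence, contradicting its boundedness; this pins $\mathcal M$ down to an absolute constant and closes the argument. The genuinely delicate points in carrying this out are: the exact matching of time windows through the iteration (the role of the factor $\sqrt b$, and of the ``$R$ large enough'' caveat, which also rules out the degenerate regime where $\floor{\gamma/\phi(r^{-2})}$ vanishes); propagating the constants without circular dependence; and, a feature absent from the diffusive case, controlling at each optional-stopping step the probability that the non-local chain \emph{overshoots} far outside the working box, for which the tail bound of Lemma \ref{lm:Lemma21_from_CKW} is needed to keep these contributions subordinate to the main terms.
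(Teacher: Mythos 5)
Your outline correctly identifies the overall strategy (Bass--Levin / Krylov--Safonov iteration) and your two preliminary lemmas are reasonable: the hitting estimate is essentially the paper's Proposition~\ref{prop:KS_estimate}, and the level-set estimate is a Nash--Moser--type step not made explicit in the paper but in the same spirit. However, there is a genuine and central gap: you reduce the theorem to \emph{``it remains to replace $\mathcal M$ by an absolute constant''} and then only sketch the iteration in prose. That iteration \emph{is} the proof of the theorem; everything before it is setup. In the paper, after normalising $\min_{w} q(0,w)=q(0,v)=1$ (note: the opposite normalisation from yours, which is what makes the contradiction argument run cleanly), one \emph{constructs} a sequence $(k_i,x_i)$ with radii $r_i/R=c_2 K_i^{-1/(d+2)}$ and values $K_i=q(k_i,x_i)$ satisfying $K_{i+1}\ge(1+\rho)K_i$, then shows (via Claims~\ref{Claim_1} and~\ref{Claim_3}, Lemmas~\ref{lm:theta_2},~\ref{lm:theta_3},~\ref{lm:R_0}) that for $K_1$ too large all $(k_i,x_i)$ stay in the fixed box while $r_i\to 0$, contradicting $r_i\ge R_0$. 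None of that bookkeeping — the choice of $\eta,\zeta,a$, the two key inequalities $j(2\sqrt a R)/j(r/R_0)>1/(\theta_2\zeta K)$ and $|Q(0,x,r/B)|j(\sqrt b R)>3/(\theta_1\zeta K)$, the alternative dichotomy at each step, and the containment of the new point — appears in your argument.

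Two smaller issues. First, your displayed identity $q(k_1,y_1)=\sum_y p^{\phi}(m,y_1,y)\,q(k_1-m,y)$ has the time running the wrong way: with $V_k=V_0+k$, parabolicity gives $q(k_1,y_1)=\bbE^{(k_1,y_1)}[q(k_1+m,S^{\phi}_m)]=\sum_y p^{\phi}(m,y_1,y)\,q(k_1+m,y)$, so your level-set argument would need $q$ at \emph{later} times (and then one must check the step $k_1+m$ stays below $\floor{\gamma/\phi((\sqrt b R)^{-2})}$, which is precisely what the constant $b$ is for). Second, bounding $\mathcal M=\sup$ over the larger box is not a reduction but a restatement: a priori $\mathcal M$ is exactly the quantity the Harnack inequality controls, and the argument you would need to bound it is the iteration you omit. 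As written, the proposal is an accurate description of the shape of the proof but does not constitute a proof.
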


Before we prove this theorem we need to establish a series of lemmas.
Let 
\begin{align*}
\tau(k, x, r) := \min\{l \ge 0 : (V_l, S^{\phi}_l) \notin Q(k, x, r)\}
\end{align*}
and put $\tau(x, r) = \tau(0, x, r)$. We observe that $\tau(k, x, r) \le \floor{\gamma / \phi(r^{-2})} + 1$. 
For a non-empty set $ A \subseteq Q(0, x, r)$, we define
\begin{align*}
A(k) = \{y \in \bbZ^d : (k, y) \in A\}\subset \bbZ^d .
\end{align*}
We now fix a non-empty $ A \subseteq Q(0, x, r)$ such that $A(0) = \emptyset$ and we set
\begin{align*}
N(k, x) = \bbP^{(k, x)} (S^{\phi}_1 \in A(k + 1)) \bbjedan_{A^c} (k, x).
\end{align*}
For any $A\subset \calP$ we also define
\begin{align*}
T_A = \min \{n \ge 0 : (V_n, S^{\phi}_n) \in A\},\  \mathrm{and}\  T_{\emptyset} = \infty .
\end{align*}
\begin{LM}\label{lm:J-martingale}
	In the above notation, let
	\begin{equation*}
		J_n = \bbjedan_A(V_n, S^{\phi}_n) - \bbjedan_A(V_0, S^{\phi}_0) - \sum_{k = 0}^{n - 1} N(V_k, S^{\phi}_k).
	\end{equation*}
	The process $J_{n \wedge T_A}$ is a $\calF$-martingale.
\end{LM}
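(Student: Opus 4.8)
The plan is to recognise the claim as the Dynkin (Doob) decomposition of the bounded adapted process $k \mapsto \bbjedan_A(V_k, S^{\phi}_k)$ stopped at $T_A$: the summand $N(V_k, S^{\phi}_k)$ is precisely the one-step $\calF_k$-conditional mean increment of $\bbjedan_A(V_{k + 1}, S^{\phi}_{k + 1})$ on the event that the chain has not yet entered $A$. First I would record integrability and adaptedness. Since $0 \le \bbjedan_A \le 1$ and $0 \le N \le 1$, we have $\aps{J_n} \le 2 + n$, so each $J_{n \wedge T_A}$ is integrable; moreover $\bbjedan_A(V_\cdot, S^{\phi}_\cdot)$ and $N(V_\cdot, S^{\phi}_\cdot)$ are adapted and $T_A$ is an $\calF$-stopping time, so $J_{n \wedge T_A}$ is $\calF_n$-measurable.

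For the martingale property, fix $n \ge 0$ and compute $\bbE[J_{(n + 1) \wedge T_A} - J_{n \wedge T_A} \mid \calF_n]$. On the $\calF_n$-measurable event $\{T_A \le n\}$ one has $(n + 1) \wedge T_A = n \wedge T_A = T_A$, so this increment vanishes identically there. On the complementary, again $\calF_n$-measurable, event $\{T_A > n\}$ we have $n \wedge T_A = n$ and, since $T_A$ is integer-valued, $(n + 1) \wedge T_A = n + 1$; hence on $\{T_A > n\}$
\[
	J_{(n + 1) \wedge T_A} - J_{n \wedge T_A} = \bbjedan_A(V_{n + 1}, S^{\phi}_{n + 1}) - \bbjedan_A(V_n, S^{\phi}_n) - N(V_n, S^{\phi}_n).
\]
Since $\{T_A > n\} \subseteq \{(V_n, S^{\phi}_n) \in A^c\}$, the middle term is $0$ and $N(V_n, S^{\phi}_n) = \bbP^{(V_n, S^{\phi}_n)}(S^{\phi}_1 \in A(V_n + 1))$ there. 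Using $V_{n + 1} = V_n + 1$ together with the Markov property of $(V_k, S^{\phi}_k)$,
\[
	\bbE[\bbjedan_A(V_{n + 1}, S^{\phi}_{n + 1}) \mid \calF_n] = \bbP(S^{\phi}_{n + 1} \in A(V_n + 1) \mid \calF_n) = \bbP^{S^{\phi}_n}(S^{\phi}_1 \in A(V_n + 1)) = N(V_n, S^{\phi}_n)
\]
on $\{T_A > n\}$. Hence the conditional increment vanishes on $\{T_A > n\}$ as well, and combining the two cases gives $\bbE[J_{(n + 1) \wedge T_A} \mid \calF_n] = J_{n \wedge T_A}$.

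The argument is essentially bookkeeping with the stopping time; the only point needing a little care is the identity $(n + 1) \wedge T_A = n + 1$ on $\{T_A > n\}$, which lets the stopped increment equal the unstopped one there, together with checking that the factor $\bbjedan_{A^c}$ in the definition of $N$ is matched by the fact that the chain is still in $A^c$ at time $n$ on that event. I do not anticipate a genuine obstacle. The hypothesis $A(0) = \emptyset$ is used only to ensure $\bbjedan_A(V_0, S^{\phi}_0) = 0$, i.e. $J_0 = 0$; this normalisation is irrelevant to the martingale property but makes $J_{n \wedge T_A}$ the natural compensated hitting indicator exploited in the sequel.
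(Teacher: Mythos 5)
Your proof is correct and proceeds along essentially the same lines as the paper's: split on the $\calF_n$-measurable events $\{T_A \le n\}$ and $\{T_A > n\}$, observe the increment vanishes trivially in the first case, and in the second use $V_{n+1} = V_n + 1$, the Markov property, and the fact that the indicator $\bbjedan_{A^c}$ in the definition of $N$ is automatically $1$ on $\{T_A > n\}$ to match the conditional increment with $N(V_n, S^{\phi}_n)$. The only cosmetic differences are that you spell out the integrability/adaptedness preliminaries and the step $(n+1)\wedge T_A = n+1$ on $\{T_A > n\}$, which the paper takes for granted.
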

\begin{proof}
We have
	\begin{align*}
		\bbE[J_{(k + 1) \wedge T_A} &- J_{k \wedge T_A} \mid \calF_k] \\
		&= \bbE[\bbjedan_A(V_{(k + 1)\wedge T_A}, S^{\phi}_{(k + 1)\wedge T_A}) - \bbjedan_A(V_{k\wedge T_A}, S^{\phi}_{k\wedge T_A}) - N(V_{k\wedge T_A}, S^{\phi}_{k\wedge T_A}) \mid \calF_k].
	\end{align*}
	If $T_A\leq k$ then the right-hand side of the identity above is zero. If $T_A  > k$ then
	\begin{align*}
		\bbE[J_{(k + 1) \wedge T_A} - J_{k \wedge T_A} \mid \calF_k]
		& = \bbE[\bbjedan_A(V_{k + 1}, S^{\phi}_{k + 1}) \mid \calF_k] - N(V_k, S^{\phi}_k) \\
		& = \bbP^{(V_k, S^{\phi}_k)} (S^{\phi}_1 \in A(V_k + 1)) - N(V_k, S^{\phi}_k) = 0,
	\end{align*}
as desired.
\end{proof}

\begin{PROP}\label{prop:KS_estimate}
	There exists a constant $\theta_1 \in (0, 1)$ such that
	\begin{equation}\label{eq:KS_estimate}
		\bbP^{(0, x)} (T_A < \tau(x, r)) \ge \theta_1 \aps{A} j(r).
	\end{equation}
\end{PROP}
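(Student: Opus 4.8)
The plan is to bound below the probability that the space–time chain $(V_l,S^\phi_l)$ enters the set $A$ before leaving the cylinder $Q(0,x,r)$, by using the martingale $J_{n\wedge T_A}$ from Lemma \ref{lm:J-martingale} together with the lower estimate \eqref{eq:one_step_prob_estimates} for the one-step transition probability. First I would apply the optional stopping theorem to $J_{n\wedge T_A}$ at the bounded stopping time $\tau(x,r)\wedge T_A$, noting $\tau(x,r)\le\floor{\gamma/\phi(r^{-2})}+1$ is bounded. Since $A(0)=\emptyset$ we have $\bbjedan_A(V_0,S^\phi_0)=0$, so this yields
\begin{equation*}
\bbP^{(0,x)}(T_A\le\tau(x,r))=\bbE^{(0,x)}\bbjedan_A(V_{\tau(x,r)\wedge T_A},S^\phi_{\tau(x,r)\wedge T_A})=\bbE^{(0,x)}\sum_{k=0}^{\tau(x,r)\wedge T_A-1}N(V_k,S^\phi_k).
\end{equation*}
Because $N$ is supported off $A$, on the event $\{k<T_A\}$ the chain is still in $A^c$, and I would bound the sum below by keeping only the contribution of the excursions inside $Q(0,x,r)$ up to time $\tau(x,r)$.

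The key step is the pointwise lower bound on $N(V_k,S^\phi_k)$ when $(V_k,S^\phi_k)\in Q(0,x,r)\setminus A$. For such a point $(k,w)$ with $w\in B(x,r)$,
\begin{equation*}
N(k,w)=\bbP^{(k,w)}(S^\phi_1\in A(k+1))=\sum_{y\in A(k+1)}p^\phi(1,w,y)\ge c_1\sum_{y\in A(k+1)}j(\aps{w-y}),
\end{equation*}
using \eqref{eq:one_step_prob_estimates}, and since both $w$ and every $y\in A(k+1)$ lie in $B(x,r)$ we have $\aps{w-y}<2r$, so monotonicity of $j$ gives $j(\aps{w-y})\ge j(2r)\ge c_2\, j(r)$ by \eqref{eq:scaling} (this last comparison is where the scaling condition enters, to control $\phi((2r)^{-2})/\phi(r^{-2})$). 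Hence $N(k,w)\ge c_1c_2\aps{A(k+1)}j(r)$. Summing over $k$ and using $\sum_{k}\aps{A(k+1)}$ is comparable to $\aps{A}$ up to the time horizon, I would obtain
\begin{equation*}
\bbP^{(0,x)}(T_A\le\tau(x,r))\ge c_3\, j(r)\,\bbE^{(0,x)}\big[\aps{A}\wedge(\text{number of steps before }\tau(x,r)\wedge T_A)\big].
\end{equation*}

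To finish I must control the expected number of steps the chain spends in the cylinder before exiting. Here Theorem \ref{tm:maximal_inequality} is the crucial input: it says that over the time window $\floor{\gamma/\phi(r^{-2})}$ the walk started at $x$ stays within distance $r/2$ of $x$ with probability at least $3/4$, so with probability at least $3/4$ we have $\tau(x,r)=\floor{\gamma/\phi(r^{-2})}+1$, i.e.\ the chain survives the full time window inside $Q(0,x,r)$. On that event the number of steps before $\tau(x,r)\wedge T_A$ is either $T_A$ (if $A$ is hit) or the full window $\floor{\gamma/\phi(r^{-2})}$; either way, combining the two cases — if $T_A$ is small we are already done via the previous display, and if $T_A$ is large the chain accumulates order $\floor{\gamma/\phi(r^{-2})}\asymp 1/\phi(r^{-2})=r^d/(r^d\phi(r^{-2}))$ steps, so $j(r)\cdot\floor{\gamma/\phi(r^{-2})}\asymp r^{-d}$ and the sum is bounded below by a multiple of $\aps{A}\wedge\aps{B(x,r)}\asymp$ a constant times what we need — yields the bound $\bbP^{(0,x)}(T_A<\tau(x,r))\ge\theta_1\aps{A}j(r)$ for a suitable $\theta_1\in(0,1)$ (the strict inequality $T_A<\tau(x,r)$ versus $\le$ is cosmetic since $\tau$ can be replaced by $\tau+1$ or $A$ lies strictly inside). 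The main obstacle I anticipate is making the bookkeeping for $\sum_k\aps{A(k+1)}\bbjedan_{\{k<T_A\}}$ precise: one must argue that, on the substantial-probability event that the chain does not exit the cylinder early, it genuinely has enough time steps to "see" all of $A$, and this is exactly where the calibration of the time-length $\floor{\gamma/\phi(r^{-2})}$ in the definition of $Q$ against $j(r)=r^{-d}\phi(r^{-2})$ pays off.
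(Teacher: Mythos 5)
Your proposal follows essentially the same route as the paper: optional stopping applied to the martingale $J_{n\wedge T_A}$ from Lemma~\ref{lm:J-martingale}, a pointwise lower bound on $N(k,w)$ via the one-step estimate \eqref{eq:one_step_prob_estimates} and monotonicity of $j$, and then Theorem~\ref{tm:maximal_inequality} to ensure the chain lives long enough in the cylinder. The two ingredients you flag as essential (the martingale identity and the Pruitt-type bound) are exactly the ones the paper uses. However, there are two points where your write-up is not complete.

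First, a minor but genuine issue: when you write
$N(k,w)=\sum_{y\in A(k+1)}p^\phi(1,w,y)\ge c_1\sum_{y\in A(k+1)}j(\aps{w-y})$,
the estimate \eqref{eq:one_step_prob_estimates} is only valid for $y\neq w$, and $j(0)$ is not defined. So if $w\in A(k+1)$ you must treat the diagonal term $p^\phi(w,w)$ separately. The paper does this via Lemma~\ref{lm:lower_bound_for_P(X_1=0)} (the bound $p^\phi(w,w)\ge C_1$), noting that $j(r)\le 1$ for $r\ge 1$, so $C_1\bbjedan_{A(k+1)}(w)\ge c\, j(r)\bbjedan_{A(k+1)}(w)$, and one still gets $N(k,w)\ge c_2\, j(r)\aps{A(k+1)}$. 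Dropping $y=w$ and working with $\aps{A(k+1)\setminus\{w\}}$ alone is not enough, because for a set $A$ whose time-slices are singletons this would leave you with a vanishing lower bound.

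Second, and more substantively: the final passage "combining the two cases" is the crux and you leave it at the level of heuristics, as you admit. The danger is circularity: you want to lower bound $\bbP^{(0,x)}(T_A<\tau(x,r))$, but the expected sum $\bbE\bigl[\sum_{k<M}\aps{A(k+1)}\bigr]$ captures all of $\aps{A}$ only on the event $\{M\ge\floor{\gamma/\phi(r^{-2})}\}$, whose probability depends on $\bbP(T_A\le\tau)$ itself. The paper resolves this with a clean dichotomy: if $\bbP(T_A\le\tau)\ge 1/4$ one is done directly, because $\aps{A}j(r)\le c''(\floor{\gamma/\phi(r^{-2})}+1)\phi(r^{-2})\le 2c''\gamma$ is uniformly bounded, so $1/4\ge(8c''\gamma)^{-1}\aps{A}j(r)$; if $\bbP(T_A\le\tau)<1/4$ one combines this with $\bbP(\tau\le\floor{\gamma/\phi(r^{-2})})\le 1/4$ from Theorem~\ref{tm:maximal_inequality} to get $\bbP(M\ge\floor{\gamma/\phi(r^{-2})})\ge 1/2$, and then the sum equals $\aps{A}$ on that event. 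An alternative (implicit in your sketch) is the self-improving inequality
$\bbP(T_A\le\tau)\ge c_2 j(r)\aps{A}\bigl(3/4-\bbP(T_A\le\tau)\bigr)$,
which can be solved for $\bbP(T_A\le\tau)$ using the uniform bound $j(r)\aps{A}\le 2c''\gamma$; this also closes the argument, but you would need to write it out. As stated, your "combining the two cases" sentence does not establish the lower bound, and the discussion involving $r^{-d}$ and $\aps{A}\wedge\aps{B(x,r)}$ does not seem to lead to the required conclusion.
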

\begin{proof}
We claim that $\floor{\gamma / \phi(r^{-2})} + 1 \le 2\gamma / \phi(r^{-2})$. Indeed,
we have $A(0) = \emptyset$ and $A \neq \emptyset$ so it follows that $A(k) \neq \emptyset$, for some $k \ge 1$. Thus $\gamma / \phi(r^{-2}) \ge 1$, which clearly  yields the claim.

We first assume that $\bbP^{(0, x)}(T_A \le \tau(x, r)) \ge 1/4$. By \eqref{eq:volume_growth} we get
	\begin{align*}
		\aps{A} j(r)
		& \le c''(\floor{\gamma / \phi(r^{-2})} + 1) \phi(r^{-2}) \le 2c'' \gamma.
	\end{align*}
	Hence
	\begin{equation*}
		\bbP^{(0, x)} (T_A \le \tau(x, r)) \ge \frac{1}{4} = \frac{1}{8c'' \gamma} 2c'' \gamma \ge \frac{1}{8c'' \gamma} \aps{A} j(r).
	\end{equation*}
	
Assume that $\bbP^{(0, x)}(T_A \le \tau(x, r)) < 1/4$. Let $M := T_A \wedge \tau(x, r)$. By Lemma \ref{lm:J-martingale} and the Optional Stopping Theorem, $\bbE[J_M] = \bbE[J_0] = 0$. 
This and the fact that $(0, X_0) \notin A$ imply
	\begin{equation*}
		\bbE^{(0, x)} [\bbjedan_A(M, S^{\phi}_M)] = \bbE^{(0, x)} \Big[\sum_{k = 0}^{M - 1} N(k, S^{\phi}_k)\Big].
	\end{equation*}
	By \eqref{eq:one_step_prob_estimates}, Lemma \ref{lm:lower_bound_for_P(X_1=0)} and using monotonicity of the function $j$, we get that for $(k, w) \in Q(0, x, r)\cap A^c$ 
	\begin{align*}
		N(k, w)
		& = \sum_{y \in A(k + 1) \setminus \{w\}} p^{\phi}(w, y) + p^{\phi}(w, w) \bbjedan_{A(k + 1)}(w) \\
	 & \geq  c_1 j(2r) \aps{A(k + 1) \setminus \{w\}} + C_1\, \bbjedan_{A(k + 1)}(w)
	  \geq c_2  j(r) \aps{A(k + 1)}.
	\end{align*}
Observe that if  $M \ge \floor{\gamma / \phi(r^{-2})}$ then $\sum_{k = 0}^{M - 1} \aps{A(k + 1)} = \aps{A}$. Hence, on the set $\{M \ge \floor{\gamma / \phi(r^{-2})}\}$ we have
	\begin{equation*}
		\sum_{k = 0}^{M - 1} N(k, S^{\phi}_k) \ge \sum_{k = 0}^{M - 1} c_2 \aps{A(k + 1)} j(r)  = c_2 \aps{A} j(r).
	\end{equation*}
Since $\bbP^{(0, x)} (T_A \le \tau(x, r)) = \bbE^{(0, x)} [\bbjedan_A(M, S^{\phi}_M)] $, we get
	\begin{align*}
		\bbP^{(0, x)}(T_A \le \tau(x, r)) 
		& \geq c_2 \aps{A} j(r) \bbP^{(0, x)} (M \ge \floor{\gamma / \phi(r^{-2})}) \\
		 & = c_2 \aps{A} j(r) \left(1 - \bbP^{(0, x)} (T_A < \tau(x, r), T_A < \floor{\gamma / \phi(r^{-2})})\right. \\
		 & \quad \qquad \qquad \qquad - \left.\bbP^{(0, x)} (\tau(x, r) < T_A, \tau(x, r) < \floor{\gamma / \phi(r^{-2})})\right) \nonumber \\
		 & \ge c_2 \aps{A} j(r) \left(1 - \bbP^{(0, x)} (T_A \le \tau(x, r)) \right. \\
		 & \quad \qquad \qquad \qquad - \left.\
		 \bbP^{(0, x)} \left( \tau(x, r) \le \floor{\gamma / \phi(r^{-2})}\right) \right).
	\end{align*}
We notice that
$ \max_{k \le \floor{\gamma / \phi(r^{-2})}} \aps{S^{\phi}_k - x} \ge r/2$ if $\tau(x, r) \le \floor{\gamma / \phi(r^{-2})}$.
Thus \eqref{eq:maximal_inequality} implies
	\begin{equation*}
		\bbP^{(0, x)} \OBL{\tau(x, r) \le \floor{\gamma / \phi(r^{-2})}} \le \bbP^{(0, x)} \big(\max_{k \le \floor{\gamma / \phi(r^{-2})}} \aps{S^{\phi}_k - x} \ge r/2\big) \le 1/4.
	\end{equation*}
We conclude the desired result with $\theta_1 = \frac{1}{2}\wedge \frac{1}{8c''\gamma}\wedge \frac{c_2}{2}$.
\end{proof}

\begin{LM}\label{lm:theta_2}
	There exists a constant $\theta_2 > 0$ such that for $(k, x) \in Q(0, z, R/2)$ and for $r > 0$ such that $k \ge \floor{\gamma / \phi(r^{-2})} + 1$ we have
	\begin{equation*}
		\bbP^{(0, x)} \OBL{T_{U(k, x, r)} < \tau(z, R)} \ge \theta_2 \frac{j(R)}{j(r)},
	\end{equation*}
	where $U(k, x, r) = \{k\} \times B(x, r)$.
\end{LM}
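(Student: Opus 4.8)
The plan is to recast the assertion as a near–diagonal lower bound for the transition kernel killed on a large ball, and to obtain that bound by iterating Proposition~\ref{prop:KS_estimate} along a geometric ladder of spatial scales, with Theorem~\ref{tm:maximal_inequality} keeping the walk confined.

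\emph{Reduction.} Put $\rho:=\big(\phi^{-1}(\gamma/k)\big)^{-1/2}$, so that $\floor{\gamma/\phi(\rho^{-2})}=k$. The two hypotheses give $r\le\rho\le R/2$, and hence $k\le\floor{\gamma/\phi(R^{-2})}$: indeed $k\ge\floor{\gamma/\phi(r^{-2})}+1>\gamma/\phi(r^{-2})$ forces $\phi(r^{-2})\ge\gamma/k=\phi(\rho^{-2})$, so $r\le\rho$ ($\phi$ increasing); and $(k,x)\in Q(0,z,R/2)$ forces $\phi\big((R/2)^{-2}\big)\le\gamma/k=\phi(\rho^{-2})$, so $\rho\le R/2$. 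Because $V_l\equiv l$,
\[
\{T_{U(k,x,r)}<\tau(z,R)\}=\{S^\phi_k\in B(x,r)\}\cap\big\{(V_l,S^\phi_l)\in Q(0,z,R),\ l\le k\big\}\ \supseteq\ \{S^\phi_k\in B(x,r)\}\cap\Big\{\max_{l\le k}\aps{S^\phi_l-x}<R/2\Big\},
\]
the inclusion using $x\in B(z,R/2)$ and $k\le\floor{\gamma/\phi(R^{-2})}$. From \eqref{eq:scaling} and $r\le\rho\le R/2$ one gets $j(R)/j(r)\le c_1(r/\rho)^{d+2\alpha_*}$, so it suffices to produce $c_2>0$ with
\begin{equation}\label{eq:ndiag_goal}
\bbP^{(0,x)}\Big(S^\phi_k\in B(x,r),\ \max_{l\le k}\aps{S^\phi_l-x}<R/2\Big)\ \ge\ c_2\,(r/\rho)^{d+2\alpha_*}.
\end{equation}
Two cases are immediate: if $r\ge R/4$ then $B(x,R/4)\subseteq B(x,r)$ and Theorem~\ref{tm:maximal_inequality} with radius $R/2$ bounds the left side below by $3/4$; if $r$ is below a fixed constant (so $B(x,r)=\{x\}$, $\phi(r^{-2})\asymp 1$), Theorem~\ref{tm:on-diag_bounds} and Theorem~\ref{tm:maximal_inequality} give it directly. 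So assume $1\le r<R/4$.

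\emph{The iteration.} Let $B,b$ be as in \eqref{eq:def_of_B_and_b}; $B$ is chosen (via \eqref{eq:scaling}, using $c_*B^{2\alpha_*}\ge 2$) so that $\phi\big((\rho'/B)^{-2}\big)\ge 2\phi\big((\rho')^{-2}\big)$, i.e.\ the natural time-length $\floor{\gamma/\phi((\rho')^{-2})}$ at least doubles under $\rho'\mapsto B\rho'$. Take the ladder $\rho_j=B^j r$ for $0\le j\le N$, where $N\asymp\log_B(\rho/r)$ is chosen so that $\rho_N\asymp\rho$ while $m_N:=\floor{\gamma/\phi(\rho_N^{-2})}\le k/8$; then $m_0\le k-1$ and $m_j\ge 2m_{j-1}$, so $\sum_{j\le N}m_j\le 2m_N\le k/4$. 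First, the walk spreads freely: a single application of Theorem~\ref{tm:maximal_inequality} (radius $\asymp\rho_N$, with $b$ absorbing the discrepancy between time-lengths) puts the walk, with probability $\ge 3/4$, inside $B(x,c_3\rho_N)\subseteq B(x,R/2)$ throughout $[0,t_\star]$ for some $t_\star\le k/2$, hence in $B(x,c_3\rho_N)$ at time $t_\star$. Then the walk funnels down: reserve disjoint time-windows $I_j$ ($j=N,N-1,\dots,1$) of lengths $\asymp m_j$ filling $[t_\star,k]$ (possible since $\sum m_j\le k/4$, $t_\star\le k/2$); at rung $j$, given that at the start of $I_j$ the walk lies in $B(x,c_3\rho_j)$, apply Proposition~\ref{prop:KS_estimate} in the space-time box of radius $\asymp\rho_j$ and time-length $\asymp m_j$ about the current position, to the slab $A_j=(\text{time-band of width }m_{j-1})\times B(x,\rho_{j-1})$, of size $\asymp m_{j-1}\rho_{j-1}^d$; since $m_{j-1}\,\phi(\rho_j^{-2})\asymp 1$ by the calibration of $B$, this hits $A_j$ before leaving the box with probability $\ge c_4 B^{-d}$, after which Theorem~\ref{tm:maximal_inequality} holds the walk inside $B(x,c_3\rho_{j-1})$ to the end of $I_j$ with further probability $\ge 3/4$; the final rung is run into $B(x,r/2)$ and held into $B(x,r)$, so the terminal ball is $B(x,r)$.

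Since $B\ge 3$ gives $B(w,\rho_j)\subseteq B(x,2\rho_j)\subseteq B(x,\rho_{j+1})$, the center-drift stays within a bounded multiple of the current radius and every ball used lies in $B(x,R/2)$ (as $c_3\rho_j\le c_3\rho_N\asymp\rho\le R/2$). Composing the two phases by the strong Markov property, the left side of \eqref{eq:ndiag_goal} is at least $\tfrac34\cdot\prod_{j=1}^{N}\big(c_4 B^{-d}\cdot\tfrac34\big)$; and $B,b$ are calibrated precisely so that $c_4\cdot\tfrac34\cdot B^{-d}\ge B^{-(d+2\alpha_*)}$, whence the product is $\ge c_5\,B^{-(d+2\alpha_*)N}\ge c_2\,(r/\rho)^{d+2\alpha_*}$, which is \eqref{eq:ndiag_goal}.

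\emph{Main obstacle.} The substance is entirely the bookkeeping of the funnelling phase: organising the windows $I_j$ so that a visit to $A_j$ can be \emph{held} into one time-slice at the window's end; checking, with $B,b$ exactly as in \eqref{eq:def_of_B_and_b}, that the per-rung loss is at most $B^{-(d+2\alpha_*)}$ so that the geometric product over $\asymp\log_B(\rho/r)$ rungs reproduces the exponent $d+2\alpha_*$ needed to dominate $j(R)/j(r)$; and tracking the center-drift and the enlargements of $\rho_j$ to $2\rho_j$ so that all balls remain inside $B(x,R/2)$ while the terminal ball is $B(x,r)$ itself — the inequality $\bbP(\,\cdot\in B(x,r))\le\bbP(\,\cdot\in B(x,2r))$ running the wrong way, this is why the last rung must land in $B(x,r/2)$. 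Beyond this, the only inputs are Proposition~\ref{prop:KS_estimate}, Theorem~\ref{tm:maximal_inequality}, the scaling relations \eqref{eq:scaling}--\eqref{eq:scaling_for_inverse}, and the volume growth \eqref{eq:volume_growth}.
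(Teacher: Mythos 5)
Your reduction to \eqref{eq:ndiag_goal} (via $\rho=(\phi^{-1}(\gamma/k))^{-1/2}$, the inclusion $\{S^\phi_k\in B(x,r)\}\cap\{\max_{l\le k}|S^\phi_l-x|<R/2\}\subseteq\{T_{U(k,x,r)}<\tau(z,R)\}$, and the scaling bound $j(R)/j(r)\le c_1(r/\rho)^{d+2\alpha_*}$) is fine, and the two boundary cases ($r\ge R/4$, $r\le$ const.) are handled correctly. The funnelling phase, however, has a genuine gap. The per-rung hitting probability from Proposition~\ref{prop:KS_estimate} applied to the slab $A_j$ of cardinality $\asymp m_{j-1}\rho_{j-1}^d$ inside a box of scale $\rho_j=B\rho_{j-1}$ is
\[
\theta_1\,|A_j|\,j(\rho_j)\ \asymp\ \theta_1\gamma\,\frac{\phi(\rho_j^{-2})}{\phi(\rho_{j-1}^{-2})}\,B^{-d},
\]
and your parenthetical claim that $m_{j-1}\phi(\rho_j^{-2})\asymp1$ ``by the calibration of $B$'' is incorrect: by \eqref{eq:scaling} this quantity is $\asymp\gamma\,\phi(\rho_j^{-2})/\phi(\rho_{j-1}^{-2})$, which lies between $\gamma/(c^*B^{2\alpha^*})$ and $\gamma/(c_*B^{2\alpha_*})$, not near $1$. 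Consequently the guaranteed per-rung success probability is at most of order $\kappa\,B^{-(d+2\alpha^*)}$ with $\kappa=\tfrac34\theta_1\gamma/c^*<1$, and over $N\asymp\log_B(\rho/r)$ rungs the product is $\asymp\kappa^N B^{-N(d+2\alpha^*)}\asymp(r/\rho)^{d+2\alpha^*+\beta}$ with $\beta=\log(1/\kappa)/\log B>0$. Since $\alpha^*\ge\alpha_*$ and $\beta>0$, this is a strictly worse power than the target $(r/\rho)^{d+2\alpha_*}$, so the chaining argument cannot close the gap when $\rho/r$ is large. There is no choice of $B,b$ that fixes this: the multiplicative constant losses from $\theta_1$, $\gamma$, $c^*$, $3/4$ and the scaling ratio compound once per rung, and the number of rungs is unbounded.

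The paper avoids iteration entirely: it applies Proposition~\ref{prop:KS_estimate} \emph{once} to the thick space--time slab $Q'=\{k,k-1,\ldots,k-\floor{\gamma/\phi(r^{-2})}\}\times B(x,r/2)$, whose cardinality is $\asymp r^d/\phi(r^{-2})=1/j(r)$, giving $\bbP^{(0,x)}(T_{Q'}<\tau(z,R))\gtrsim j(R)/j(r)$ in a single shot; then, by the strong Markov property, from any point of $Q'$ the walk need only remain within $r/2$ of its position for the next $\floor{\gamma/\phi(r^{-2})}$ steps to land in $U(k,x,r)$, and Theorem~\ref{tm:maximal_inequality} gives that a probability $\ge3/4$. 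The decisive idea you missed is precisely this choice of $Q'$: by making the target set thick in time (depth $\asymp\gamma/\phi(r^{-2})$) rather than a single time-slice reached by chaining, one captures the full factor $j(R)/j(r)$ without any compounding loss.
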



\begin{proof}
	Let $Q' = \{k, k - 1, \ldots, k - \floor{\gamma / \phi(r^{-2})}\} \times B(x, r/2)$. One easily verifies that $Q'(0) = \emptyset$ and $Q' \subseteq Q(0, z, R)$. By Proposition \ref{prop:KS_estimate}, we get
	\begin{align*}
		\bbP^{(0, x)} \left(T_{Q'}  < \tau(z, R)\right) 
		& \ge \theta_1 \aps{Q'} j(R) 
		 \ge \theta_1 c' (\floor{\gamma / \phi(r^{-2})} + 1) (r/2)^d j(R) \nonumber \\
		 &\ge \frac{\theta_1 c'}{2^d} \frac{\gamma}{\phi(r^{-2})} r^d j(R) = c_1 \frac{j(R)}{j(r)}.
	\end{align*}
	The strong Markov property yields
	\begin{align}\label{al:strong_Markov_property}
		\bbP^{(0, x)}
		 \OBL{T_{U(k, x, r)} < \tau(z, R)} &\ge \bbP^{(0, x)} \OBL{T_{U(k, x, r)} < \tau(z, R), T_{Q'} < \tau(z, R)} \nonumber \\
		& = \bbP^{(T_{Q'}, S^{\phi}_{T_{Q'}})} \OBL{T_{U(k, x, r)} < \tau(z, R)} \bbP^{(0, x)} \OBL{T_{Q'} < \tau(z, R)}.
	\end{align}
	We are left to bound from below the first term in \eqref{al:strong_Markov_property}. Observe that if the process $(V_k, S^{\phi}_k)$ starts from the point $(T_{Q'}, S^{\phi}_{T_{Q'}})$ and $S^{\phi}$-coordinate stays in $B(x, r)$ for at least $\floor{\gamma / \phi(r^{-2})}$ steps, then $(V_k, S^{\phi}_k)$ hits $U(k, x, r)$ before exiting $Q(0, z, R)$. We also notice that $S^\phi$-coordinate stays in $B(x,r)$ for at least $\floor{\gamma / \phi(r^{-2})}$ steps if for all $T_{Q'} \le k \le T_{Q'} + \floor{\gamma / \phi(r^{-2})}$ it holds $\aps{S^{\phi}_k - S^{\phi}_{T_{Q'}}} < \frac{r}{2} $. 
Thus, using Theorem \ref{tm:maximal_inequality}, we get
	\begin{align*}
		\bbP^{(T_{Q'}, S^{\phi}_{T_{Q'}})}
		 \OBL{T_{U(k, x, r)} < \tau(z, R)} 
	\ge 3/4
	\end{align*}
and we conclude that
	\begin{equation*}
		\bbP^{(0, x)} \OBL{T_{U(k, x, r)} < \tau(z, R)} \ge \theta_2  \frac{j(R)}{j(r)} ,
	\end{equation*}
where $\theta_2 = \frac{3c_1}{4} $.
\end{proof}

\begin{LM}\label{lm:theta_3}
	Let $H(k, w)\geq 0$ be a function on $\calP$ such that $H(k,w)\bbjedan_{ B(x, 2r)}(w)=0 $. There exists a constant $\theta_3 > 0$ which does not depend on $x$, $r$ and $H$ and such that
	\begin{equation}\label{eq:Harnack_like_inequality}
		\bbE^{(0, x)} [H(V_{\tau(x, r)}, S^{\phi}_{\tau(x, r)})] \le \theta_3 \bbE^{(0, y)} [H(V_{\tau(x, r)}, S^{\phi}_{\tau(x, r)})],
	\end{equation}
	for all $y \in B(x, r/2)$.
\end{LM}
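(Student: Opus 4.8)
The strategy is to reduce both sides to a common quantity from which the dependence on the starting point has been eliminated. Recall that under $\bbP^{(0,\cdot)}$ we have $V_l=l$, so the cylinder $Q(0,x,r)=\{0,\dots,N\}\times B(x,r)$ with $N:=\floor{\gamma/\phi(r^{-2})}$ is left exactly one step after the last visit to it. Using the hypothesis $H(k,\cdot)\bbjedan_{B(x,2r)}(\cdot)=0$, which forces $H(k+1,S^\phi_{k+1})=0$ whenever $S^\phi_{k+1}\in B(x,r)\subseteq B(x,2r)$, I would first check the pointwise identity
\begin{equation*}
H(V_{\tau(x,r)}, S^\phi_{\tau(x,r)}) = \sum_{k=0}^{N}\bbjedan_{\{\tau(x,r)>k\}}\,H(k+1,S^\phi_{k+1}).
\end{equation*}
Taking $\bbE^{(0,x)}$, conditioning each term on $\calF_k$ (note $\{\tau(x,r)>k\}\in\calF_k$) and keeping only the non-vanishing targets gives
\begin{equation*}
\bbE^{(0,x)}\big[H(V_{\tau(x,r)}, S^\phi_{\tau(x,r)})\big]=\sum_{k=0}^{N}\bbE^{(0,x)}\Big[\bbjedan_{\{\tau(x,r)>k\}}\sum_{w\notin B(x,2r)}p^\phi(1,S^\phi_k,w)\,H(k+1,w)\Big],
\end{equation*}
and the same formula holds with $y$ in place of the leading $x$, since the exit time $\tau(x,r)$ and the ball $B(x,2r)$ remain anchored at $x$.

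The key step is to replace the kernel by a factor that no longer sees the source point. If $B(x,r/2)=\{x\}$, which is the case as soon as $r\le 2$, then $y=x$ and there is nothing to prove; so assume $r>2$. On $\{\tau(x,r)>k\}$ one has $S^\phi_k\in B(x,r)$, hence for $w\notin B(x,2r)$ the triangle inequality gives $\tfrac12\aps{x-w}\le\aps{S^\phi_k-w}\le\tfrac32\aps{x-w}$ with $\aps{x-w}\ge 2r$. Since $j$ is decreasing and $j(\lambda s)\asymp j(s)$ for every fixed $\lambda>0$ and $s\ge 1$ by the scaling condition \eqref{eq:scaling}, estimate \eqref{eq:one_step_prob_estimates} yields constants $0<c\le C$ with $c\,j(\aps{x-w})\le p^\phi(1,S^\phi_k,w)\le C\,j(\aps{x-w})$ uniformly over such $S^\phi_k$ and $w$. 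Setting $G(k):=\sum_{w\notin B(x,2r)}j(\aps{x-w})\,H(k+1,w)$, which is finite by Lemma \ref{lm:Lemma21_from_CKW}, the two formulas above take the common form
\begin{equation*}
c\sum_{k=0}^{N}\bbP^{(0,\cdot)}(\tau(x,r)>k)\,G(k)\ \le\ \bbE^{(0,\cdot)}\big[H(V_{\tau(x,r)}, S^\phi_{\tau(x,r)})\big]\ \le\ C\sum_{k=0}^{N}\bbP^{(0,\cdot)}(\tau(x,r)>k)\,G(k)
\end{equation*}
for the starting points $x$ and $y$ alike.

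Finally I would control the survival probabilities. Trivially $\bbP^{(0,x)}(\tau(x,r)>k)\le 1$. For the lower bound, fix $y\in B(x,r/2)$ and $k\le N$: on the event $\{\max_{l\le N}\aps{S^\phi_l-y}<r/2\}$ one has $S^\phi_l\in B(x,r)$ for all $l\le k$, hence $\tau(x,r)>k$, and Theorem \ref{tm:maximal_inequality} applied at $y$ with radius $r$ gives $\bbP^{(0,y)}(\tau(x,r)>k)\ge 3/4$. Combining the two displays,
\begin{equation*}
\bbE^{(0,x)}[H]\le C\sum_{k=0}^{N}G(k)\le \tfrac{4C}{3}\sum_{k=0}^{N}\bbP^{(0,y)}(\tau(x,r)>k)\,G(k)\le \tfrac{4C}{3c}\,\bbE^{(0,y)}[H],
\end{equation*}
which is \eqref{eq:Harnack_like_inequality} with $\theta_3=1\vee\tfrac{4C}{3c}$, independent of $x$, $r$ and $H$. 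I expect the uniform kernel comparison of the middle paragraph to be the only genuine obstacle: it is exactly what decouples the source point from the jump target and collapses the statement to the elementary survival estimate, and it is the place where the scaling hypothesis on $\phi$ is invoked.
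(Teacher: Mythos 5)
Your proof is correct and follows the same strategy as the paper's: condition on the time just before exit, compare the one-step kernel $p^\phi(S^\phi_k,w)$ to the source-independent quantity $j(|x-w|)$ uniformly via \eqref{eq:one_step_prob_estimates} and the doubling property of $j$, and then use Theorem~\ref{tm:maximal_inequality} to show $\bbP^{(0,y)}(\tau(x,r)>k)\ge 3/4$ for all $k\le\floor{\gamma/\phi(r^{-2})}$. The paper reduces to $H=\bbjedan_{(k,w)}$ and bounds the two expectations directly against $j(|x-w|)$, whereas you keep a general $H$ through a last-exit decomposition, but this is only a cosmetic repackaging of the same argument.
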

\begin{proof}
	It suffices to check validity of \eqref{eq:Harnack_like_inequality} for $H = \bbjedan_{(k, w)}$ if $y \in B(x, r/2)$, $w \notin B(x, 2r)$ and $1 \le k \le \floor{\gamma / \phi(r^{-2})} + 1$. With such a choice we have
	\begin{align}\label{al:hli_help1}
		\bbE^{(0, y)} [\bbjedan_{(k, w)} (V_{\tau(x, r)}, S^{\phi}_{\tau(x, r)})]
		& = \bbE^{(0, y)} [\bbE^{(0, y)} [\bbjedan_{(k, w)} (V_{\tau(x, r)}, S^{\phi}_{\tau(x, r)}) \mid \calF_{k - 1}]] \nonumber \\
		& = \bbE^{(0, y)} [\bbjedan_{\{\tau(x, r) > k - 1\}} p^{\phi}(S^{\phi}_{k - 1}, w)],
	\end{align}
	Since $S^{\phi}_{k - 1} \in B(x, r)$, we have $p^{\phi}(S^{\phi}_{k - 1}, w) \ge \inf_{z \in B(x, r)} p^{\phi}(z ,w)$. For $z \in B(x, r)$ and $w \notin B(x, 2r)$,  $z \neq w$ and whence \eqref{eq:one_step_prob_estimates} implies
	\begin{align*}
		\bbE^{(0, y)} [\bbjedan_{(k, w)} (V_{\tau(x, r)}, S^{\phi}_{\tau(x, r)})]
 \ge c_1 \bbP^{(0, y)} \OBL{\tau(x, r) = \floor{\gamma / \phi(r^{-2})} + 1} 
 \!\! \inf_{z \in B(x, r)} j(\aps{z - w}).
	\end{align*}
	If $(V_k, S^{\phi}_k)$ starts from $(0, y)$ and $S^{\phi}$-coordinate stays in $B(y, r/2)$ for $\floor{\gamma / \phi(r^{-2})}$ steps then at the same time it also stays in $B(x, r)$. Hence
	\begin{equation*}
		\frac{3}{4} \le \bbP^{(0, y)} \OBL{\max_{k \le \floor{\gamma / \phi(r^{-2})}} \aps{S^{\phi}_k - y} < \frac{r}{2}} \le \bbP^{(0, y)} (\tau(x, r) = \floor{\gamma / \phi(r^{-2})} + 1).
	\end{equation*}
	For every $z \in B(x, r)$ we have $\aps{z - w} \le 2\aps{x - w}$. By monotonicity of $j$ and \cite[Lemma 2.4]{MS18}, we get
	\begin{equation*}
		\inf_{z \in B(x, r)} j(\aps{z - w}) \ge j(2 \aps{x - w}) \ge 2^{-d - 2} j(\aps{x - w}),
	\end{equation*}
We obtain
	\begin{equation*}
		\bbE^{(0, y)} [\bbjedan_{(k, w)} (V_{\tau(x, r)}, S^\phi _{\tau(x, r)})] \ge c_2 j(\aps{x - w}).
	\end{equation*}
	Notice that \eqref{al:hli_help1} remains valid if the process starts from $(0, x)$ instead of $(0, y)$. Similarly we prove that
	\begin{align*}
		\bbE^{(0, x)} [\bbjedan_{(k, w)} (V_{\tau(x, r)}, S^{\phi}_{\tau(x, r)})]
		\le  c_3 j(\aps{x - w}).
	\end{align*}
The result follows with $\theta_3 = c_3  / c_2$.
\end{proof}

We can now prove the parabolic Harnack inequality.
\begin{proof}[Proof of Theorem \ref{tm:PHI}]
By multiplying the function $q$ by a constant, we can assume that
	\begin{equation}\label{eq:min=1}
		\min_{w \in B(z, R/B)} q(0, w) = q(0, v) = 1.
	\end{equation}
	Notice that if $q(0, x) = 0$ for some $x \in B(z, R/B)$ then \eqref{eq:PHI} is trivially satisfied, as the parabolicity of $q$ implies that
	\begin{equation*}
		\max_{(k, y) \in Q(\floor{\gamma / \phi(R^{-2})}, z, R/B)} q(k, y) = 0.
	\end{equation*}
	
Let $B$ be the constant defined at \eqref{eq:def_of_B_and_b}. By Lemma \ref{lm:R_0} of the Appendix, there exists a constant $R_0 \ge B$ such that
	\begin{equation}\label{R_0_choice}
		\floor{\gamma / \phi(r^{-2})} \ge \floor{\gamma / \phi((r/B)^{-2})} + 1, \quad  r \ge R_0.
	\end{equation}
Let us fix $r \ge R_0$, $(k, x) \in \calP$ and a set $G \subseteq Q(k + 1, x, r/B)$ for which it holds
	\begin{equation*}
		\frac{\aps{G}}{\aps{Q(k + 1, x, r/B)}} \ge \frac{1}{3}.
	\end{equation*}
We claim that for such a set $G$ there is a constant $c_1\in (0,1)$ such that
	\begin{align}\label{al:P_ge_c1}
		\bbP^{(k, x)}
		& (T_G < \tau(k, x, r)) \ge c_1.
	\end{align}
	Indeed, by our choice $G \subseteq Q(k, x, r)$ and 
$G(k) = \emptyset$. Therefore, Proposition \ref{prop:KS_estimate} and relation \eqref{eq:phi(v)/phi(u)_le_v/u} yield
	\begin{align*}
		\bbP^{(k, x)}
		(T_G < \tau(k, x, r)) 
		 \ge \frac{\theta_1}{3} \frac{\gamma}{\phi((r/B)^{-2})} c' \OBL{\frac{r}{B}}^d r^{-d} \phi(r^{-2})
		\ge \frac{\theta_1 \gamma c'}{3 B^{d + 2}} =c_1,
	\end{align*}
	where we can achieve that $c_1<1$ by decreasing $c'$ in \eqref{eq:volume_growth} if necessary.
	
Let $\theta_1, \theta_2$ and $\theta_3$ be the constants from Proposition \ref{prop:KS_estimate}, Lemma \ref{lm:theta_2} and Lemma \ref{lm:theta_3} respectively. We set
	\begin{equation}\label{eq:def_of_eta_zeta}
		\eta = \frac{c_1}{3}, \quad \zeta = \frac{c_1}{3} \wedge \frac{\eta}{\theta_3}, \quad a = 2\vee \OBL{\frac{2}{c_*}}^{1/\alpha_*},
	\end{equation}
	where $c_1$ is the constant from relation \eqref{al:P_ge_c1} and $c_*, \alpha_* \in(0, 1)$ are the constants from the scaling condition \eqref{eq:scaling}. 
\begin{Claim}\label{Claim_1}
There exists a constant $c_2>0$ such that for all $r, R, K > 0$ which satisfy
	\begin{equation}\label{eq:r_R_K_C2}
		\frac{r}{R} < 1 \quad \textnormal{and} \quad \frac{r}{R} \,K^{1/(d + 2)} \ge c_2,
	\end{equation}
	the following two inequalities hold
	\begin{align}
			\frac{j(2 \sqrt{a} R)}{j(r/R_0)} & > \frac{1}{\theta_2 \zeta K}, \label{eq:key_ineq_1} \\
\aps{Q(0, x, r/B)} j(\sqrt{b} R) &> \frac{3}{\theta_1 \zeta K}. \label{eq:key_ineq_2}
\end{align}	
\end{Claim}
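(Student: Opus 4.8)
The plan is to unwind the function $j(r)=r^{-d}\phi(r^{-2})$ in both desired inequalities and reduce everything to the scaling condition \eqref{eq:scaling} and the elementary estimate \eqref{eq:phi(v)/phi(u)_le_v/u}, choosing $c_2$ large at the very end so that the single constraint $(r/R)K^{1/(d+2)}\ge c_2$ absorbs all the accumulated constants. First I would record, using \eqref{eq:scaling} with $R\le 1$ (note that in applications $r,R$ are large, so $\phi$ is evaluated at small arguments and scaling applies), that for $s\le t$ one has $\phi(s^{-2})/\phi(t^{-2}) \asymp (t/s)^{2\alpha_*}$ up to the constants $c_*,c^*$; concretely $j(s)/j(t) = (t/s)^d \cdot \phi(s^{-2})/\phi(t^{-2}) \ge c_*(t/s)^{d+2\alpha_*}$ when $s\le t$. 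This is the workhorse identity for both bounds.

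For \eqref{eq:key_ineq_1}, I would write $\dfrac{j(2\sqrt a R)}{j(r/R_0)}$ and split into two cases according to whether $2\sqrt a R \ge r/R_0$ or not. In the first (typical) case apply the workhorse estimate to get a lower bound of the form $c_*\big(\frac{r}{2\sqrt a R_0 R}\big)^{d+2\alpha_*}\cdot$(a correction), and then observe that $\frac{r}{R}K^{1/(d+2)}\ge c_2$ gives $\big(\frac{r}{R}\big)^{d+2\alpha_*}K \ge \big(\frac{r}{R}\big)^{d+2\alpha_*}(r/(Rc_2))^{-(d+2)}\cdots$ — more cleanly, since $r/R<1$ we have $(r/R)^{d+2\alpha_*}\ge (r/R)^{d+2}$, so $(r/R)^{d+2\alpha_*}K \ge (r/R)^{d+2}K = \big((r/R)K^{1/(d+2)}\big)^{d+2}\ge c_2^{d+2}$, and choosing $c_2$ large makes this exceed $\frac{1}{\theta_2\zeta}$ times the remaining absolute constants. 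The constants $a$, $R_0$, $\zeta$, $\theta_2$ are all fixed absolute quantities, so this is just bookkeeping. The reverse case $2\sqrt a R < r/R_0$ forces $r/R$ bounded below by an absolute constant, which combined with $r/R<1$ pins $r\asymp R$ and makes $j(2\sqrt a R)/j(r/R_0)$ bounded below by an absolute constant, again handled by taking $c_2$ large.

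For \eqref{eq:key_ineq_2}, I would use $\aps{Q(0,x,r/B)} = \big(\floor{\gamma/\phi((r/B)^{-2})}+1\big)\cdot \aps{B(x,r/B)} \ge \frac{\gamma}{\phi((r/B)^{-2})}\cdot c'(r/B)^d$ by \eqref{eq:volume_growth}, so that $\aps{Q(0,x,r/B)}j(\sqrt b R) \ge \frac{c'\gamma}{B^d}\cdot \frac{r^d}{\phi((r/B)^{-2})}\cdot (\sqrt b R)^{-d}\phi((\sqrt b R)^{-2})$. Then bound $\phi((\sqrt b R)^{-2})/\phi((r/B)^{-2})$ from below via \eqref{eq:phi(v)/phi(u)_le_v/u} (since $\sqrt b R$ versus $r/B$ — here I would use the direction that costs only a linear factor, i.e.\ $\phi(u)/\phi(v)\ge u/v$ for $u\le v$, giving $\ge (r/B)^{-2}/(\sqrt b R)^{-2} = bR^2 B^{-2} r^{-2}$ up to arrangement) to collapse the $\phi$-ratio into a power of $r/R$, leaving a lower bound $\asymp (r/R)^{d+2}\cdot(\text{absolute const})$; then $(r/R)^{d+2}K\ge c_2^{d+2}$ again finishes it once $c_2$ is large enough relative to $\theta_1,\zeta,b,B,c',\gamma$.

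The main obstacle I anticipate is purely organizational: making sure the two inequalities can be satisfied \emph{simultaneously} with one choice of $c_2$, and correctly tracking which direction of the scaling/Bernstein inequalities to use so that every $\phi$-ratio is bounded below (not above) — a wrong direction would leave a factor one cannot control. The cleanest route is to prove each inequality in the form ``LHS $\ge$ (absolute constant)$\cdot (r/R)^{d+2}K$'', deduce ``LHS $\ge$ (absolute constant)$\cdot c_2^{d+2}$'' from \eqref{eq:r_R_K_C2} together with $r/R<1$, and then at the end set $c_2$ to be the maximum of the two thresholds. No genuinely hard analysis is involved; the content is entirely in the scaling condition, already in hand.
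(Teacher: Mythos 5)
Your strategy matches the paper's exactly in outline: reduce each left-hand side to the form $(\text{absolute const})\cdot(r/R)^{d+2}$, then read off $(r/R)^{d+2}K\ge c_2^{d+2}$ from \eqref{eq:r_R_K_C2} and choose $c_2$ at the end. Three points of cleanup. First, the paper estimates the $\phi$-ratios using only the Bernstein inequality \eqref{eq:phi(v)/phi(u)_le_v/u}, which for $s\ge t$ gives $j(s)/j(t)\ge (t/s)^{d+2}$ with no constants and with no restriction on the size of $s,t$; if you instead invoke the scaling condition \eqref{eq:scaling}, its domain is $(0,1]$, so your workhorse bound silently requires $r/R_0\ge 1$ — true in the theorem's application because of \eqref{eq:ri_ge_R0}, but not for the Claim as literally stated for all $r>0$. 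Second, the case split is vacuous: since $r/R<1$, $R_0\ge 3$, and $a\ge 2$, one always has $2\sqrt{a}R>r/R_0$, so only the "typical" case occurs. Third, in that case the pair $(2\sqrt{a}R, r/R_0)$ has $s>t$, while your workhorse is stated for $s\le t$; the correct scaling-based version for $s>t$ is $j(s)/j(t)\ge (1/c^*)(t/s)^{d+2\alpha^*}$ (your $c_*$, $\alpha_*$ are the constants for the opposite direction and would yield an upper bound on the $\phi$-ratio, not a lower bound). None of this changes the outcome — after these corrections you still land on $(t/s)^{d+2}$ up to constants and the argument closes as you describe — but using Bernstein throughout, as the paper does and as you already do for \eqref{eq:key_ineq_2}, makes the whole thing unconditional and avoids tracking which scaling exponent is active.
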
 
\noindent We prove this claim in the end of the proof of the theorem and the value of the constant $c_2$ is specified there, see \eqref{c_2Constant}.
\vspace*{0,2cm}
	
We construct a sequence of points $(k_i, x_i)$ such that $K_1 = q(k_1, x_1)$ is large enough and under this condition the sequence $K_i = q(k_i, x_i)$ is increasing and tends to infinity, cf. \eqref{eq:K_i+1-K_i}. This will finally contradict the fact that $q$ is bounded and whence the result will follow.
Let us choose $(k_1, x_1) \in Q(\floor{\gamma / \phi(R^{-2})}, z, R)$ such that it holds
	\begin{equation*}
		K_1 = q(k_1, x_1) = \max_{(k, y) \in Q(\floor{\gamma / \phi(R^{-2})}, z, R/B)} q(k, y).
	\end{equation*}
Evidently it suffices to study the case $c_2 K_1^{-1/(d + 2)} < 1/B$. 
Suppose that the points $(k_1, x_1), (k_2, x_2), \ldots, (k_i, x_i) \in Q(\floor{\gamma / \phi(R^{-2})}, z, R)$ are already defined.
We describe the procedure how to obtain $(k_{i + 1}, x_{i + 1}) \in Q(\floor{\gamma / \phi(R^{-2})}, z, R)$.
We first define $r_i$ by 
	\begin{equation}\label{eq:def_of_ri}
		\frac{r_i}{R} = c_2 K_i^{-1/ (d + 2)}.
	\end{equation}
With our choice of constants and using \eqref{R_0_choice} one can easily verify that for $v$ defined in \eqref{eq:min=1} it holds
\begin{align}\label{qqq}
(k_i, x_i) \in Q(0, v, \sqrt{a} R) \quad \mathrm{and } \quad k_i \ge 1 + \floor{\gamma / \phi((r_i/R_0)^{-2})}.
\end{align}

Now, suppose that $q \ge \zeta K_i$ on the set $U_i := \{k_i\} \times B(x_i, r_i / R_0)$. 
	Since $q$ is parabolic on $D =  \{0, 1, 2, \ldots, \floor{\gamma / \phi((\sqrt{b} R)^{-2})}\} \times \bbZ^d$, we know that $(q(V_{k \wedge \tau_D}, S^{\phi}_{k \wedge \tau_D}))_{k \ge 0}$ is a martingale. Thus \eqref{eq:key_ineq_1} and Lemma \ref{lm:theta_2} imply
	\begin{align*}
		1
		 = q(0, v) &= \bbE^{(0, v)} [q(V_{T_{U_i} \wedge \tau(v, 2\sqrt{a}R)}, S^{\phi}_{T_{U_i} \wedge \tau(v, 2\sqrt{a}R)})] \\
		& \ge \bbE^{(0, v)} [q(V_{T_{U_i} \wedge \tau(v, 2\sqrt{a}R)}, S^{\phi}_{T_{U_i} \wedge \tau(v, 2\sqrt{a}R)}) \bbjedan_{\{T_{U_i} < \tau(v, 2\sqrt{a}R)\}}] \\
		& = \bbE^{(0, v)} [q(V_{T_{U_i}}, S^{\phi}_{T_{U_i}}) \bbjedan_{\{T_{U_i} < \tau(v, 2\sqrt{a}R)\}}] \ge \zeta K_i \bbP^{(0, v)} (T_{U_i} < \tau(v, 2\sqrt{a}R)) \\
		& \ge \zeta K_i \theta_2 \frac{j(2\sqrt{a}R)}{j(r_i/R_0)} > \zeta K_i \theta_2 \frac{1}{\zeta K_i \theta_2 } = 1,
	\end{align*}
and we mention that we could apply Lemma \ref{lm:theta_2} because of \eqref{qqq}.
	Thus we get a contradiction, so there must exist $y_i \in B(x_i, r_i/R_0)$ such that $q(k_i, y_i) < \zeta K_i$.Observe that 
\begin{align*}
q(k_i, y_i) < \zeta K_i \le (c_1/3) K_i < K_i/3
\end{align*}	
and whence $x_i \neq y_i$. This in turn implies
	\begin{equation}\label{eq:ri_ge_R0}
		r_i \ge R_0.
	\end{equation}	
Suppose next that 
\begin{align*}
\bbE^{(k_i, x_i)} [q(V_{\tau(k_i, x_i, r_i)}, S^{\phi}_{\tau(k_i, x_i, r_i)}) \bbjedan_{\{S^{\phi}_{\tau(k_i, x_i, r_i)} \notin B(x_i, 2r_i)\}}] \ge \eta K_i.
\end{align*}
 By Lemma \ref{lm:theta_3} we have
	\begin{align*}
		\zeta K_i
		& > q(k_i, y_i) = \bbE^{(k_i, y_i)} [q(V_{\tau(k_i, x_i, r_i)}, S^{\phi}_{\tau(k_i, x_i, r_i)})] \\
		& \ge \bbE^{(k_i, y_i)} [q(V_{\tau(k_i, x_i, r_i)}, S^{\phi}_{\tau(k_i, x_i, r_i)}) \bbjedan_{\{S^{\phi}_{\tau(k_i, x_i, r_i)} \notin B(x_i, 2r_i)\}}] \\
		& \ge \theta_3^{-1} \bbE^{(k_i, x_i)} [q(V_{\tau(k_i, x_i, r_i)}, S^{\phi}_{\tau(k_i, x_i, r_i)}) \bbjedan_{\{S^{\phi}_{\tau(k_i, x_i, r_i)} \notin B(x_i, 2r_i)\}}] \\
		& \ge \frac{\eta}{\theta_3} K_i \ge \zeta K_i,
	\end{align*}
	which again gives a contradiction. Therefore
	\begin{equation}\label{eq:E<eta_Ki}
		\bbE^{(k_i, x_i)} [q(V_{\tau(k_i, x_i, r_i)}, S^{\phi}_{\tau(k_i, x_i, r_i)}) \bbjedan_{\{S^{\phi}_{\tau(k_i, x_i, r_i)} \notin B(x_i, 2r_i)\}}] < \eta K_i.
	\end{equation}
Define the set
	\begin{equation*}
		A_i = \{(j, y) \in Q(k_i + 1, x_i, r_i/B) : q(j, y) \ge \zeta K_i \}.
	\end{equation*}
	We want to apply Proposition \ref{prop:KS_estimate} for $A_i$ and $ Q(0, v, \sqrt{b}R)$. Clearly $A_i \subseteq Q(k_i + 1, x_i, r_i/B)$ and $A_i(0) = \emptyset$. Moreover, with the aid of \eqref{R_0_choice}, \eqref{eq:def_of_ri} and \eqref{eq:scaling} one can verify that $Q(k_i + 1, x_i, r_i/B) \subseteq Q(0, v, \sqrt{b}R)$.
Therefore
	\begin{align*}
		1
	 = q(0, v) &= \bbE^{(0, v)} [q(V_{T_{A_i} \wedge \tau(v, \sqrt{b}R)}, X_{T_{A_i} \wedge \tau(v, \sqrt{b}R)})] \\
		& \ge \bbE^{(0, v)} [q(V_{T_{A_i} \wedge \tau(v, \sqrt{b}R)}, X_{T_{A_i} \wedge \tau(v, \sqrt{b}R)}) \bbjedan_{\{T_{A_i} < \tau(v, \sqrt{b}R)\}}] \\
		& = \bbE^{(0, v)} [q(V_{T_{A_i}}, X_{T_{A_i}}) \bbjedan_{\{T_{A_i} < \tau(v, \sqrt{b}R)\}}] \ge \zeta K_i \bbP^{(0, v)}  (T_{A_i} < \tau(v, \sqrt{b}R)) \\
		& \ge \zeta K_i \theta_1 \aps{A_i} j(\sqrt{b}R)  
		 \ge \zeta K_i \theta_1 \frac{\aps{A_i}}{\aps{Q(k_i + 1, x_i, r_i/B)}} \frac{3}{\zeta K_i \theta_1},
	\end{align*}
	where we used \eqref{eq:key_ineq_2} in the last line. 
We conclude that
\begin{align*}
\frac{\aps{A_i}}{\aps{Q(k_i + 1, x_i, r_i/B)}} \le \frac{1}{3}.
\end{align*}	
Define next
	\begin{equation*}
		D_i = Q(k_i + 1, x_i, r_i/B) \setminus A_i\quad \mathrm{and}\quad
		M_i = \max_{Q(k_i + 1, x_i, 2r_i)} q.
	\end{equation*}
	By \eqref{eq:E<eta_Ki} combined with \eqref{al:P_ge_c1}, we obtain
	\begin{align*}
		K_i
		&  =\bbE^{(k_i, x_i)} [q(V_{T_{D_i}}, X_{T_{D_i}}) \bbjedan_{\{T_{D_i} < \tau(k_i, x_i, r_i)\}}] \\
		& \quad + \bbE^{(k_i, x_i)} [q(V_{\tau(k_i, x_i, r_i)}, X_{\tau(k_i, x_i, r_i)}) \bbjedan_{\{\tau(k_i, x_i, r_i) < T_{D_i}\}} \bbjedan_{\{X_{\tau(k_i, x_i, r_i)} \notin B(x_i, 2r_i)\}}] \\
		& \quad \quad + \bbE^{(k_i, x_i)} [q(V_{\tau(k_i, x_i, r_i)}, X_{\tau(k_i, x_i, r_i)}) \bbjedan_{\{\tau(k_i, x_i, r_i) < T_{D_i}\}} \bbjedan_{\{X_{\tau(k_i, x_i, r_i)} \in B(x_i, 2r_i)\}}] \\
		&\le   \zeta	K_i + \eta K_i + M_i (1 - \bbP^{(k_i, x_i)} (T_{D_i} < \tau(k_i, x_i, r_i))) \\
		&\le   \frac{c_1}{3} K_i + \frac{c_1}{3} K_i + M_i(1 - c_1) = \frac{2c_1}{3} K_i + M_i(1 - c_1).
	\end{align*}
	Hence $M_i / K_i \ge 1 + \rho$, where $\rho = c_1 / (3(1 - c_1)) > 0$. Finally, the point $(k_{i + 1}, x_{i + 1}) \in Q(k_i + 1, x_i, 2r_i)$ is chosen such that
	\begin{equation*}
		K_{i + 1} = q(k_{i + 1}, x_{i + 1}) = M_i .
	\end{equation*}
	This implies
	\begin{equation}\label{eq:K_i+1-K_i}
		K_{i + 1} \ge (1 + \rho) K_i.
	\end{equation}	
	which together with \eqref{eq:def_of_ri} gives
	\begin{equation}\label{eq:r_i+1-r_i}
		r_{i + 1} \le r_i (1 + \rho)^{-1/(d + 2)}.
	\end{equation}

	We want finally to show that if $K_1$ is chosen to be sufficiently large then the new point $(k_{i + 1}, x_{i + 1})$ will lie in $Q(\floor{\gamma / \phi(R^{-2})}, z, R)$.
Indeed, by iterating \eqref{eq:r_i+1-r_i}, we get
	\begin{equation}\label{eq:r_i+1-r_1}
		r_{i + 1} \le r_i (1 + \rho)^{-1/(d + 2)} \le r_{i - 1} (1 + \rho)^{-2/(d + 2)} \le \ldots \le r_1(1 + \rho)^{-i/(d + 2)}.
	\end{equation}
Using \eqref{eq:r_i+1-r_1} and scaling condition \eqref{eq:scaling} one easily shows that
	\begin{align}\label{eq:upper_bound_for_ki}
		k_{i + 1}
	 \le \floor{\gamma / \phi(R^{-2})} + \floor{\gamma / \phi((R/B)^{-2})} + \frac{5c_*^{-1}}{1 - \kappa^{2 \alpha_*}} \frac{1}{\phi(r_1^{-2})},
	\end{align}
with $\kappa = (1 + \rho)^{-1/(d + 2)}$. 
In a similar fashion we get
	\begin{align}\label{eq:upper_bound_for_|xi-z|}
		\aps{x_{i + 1} - z}
		& \le \frac{R}{B} + 2r_1\sum_{j = 0}^{\infty} ((1 + \rho)^{-1 / (d + 2)})^j = \frac{R}{B} + \frac{2r_1}{1 - \kappa}.
	\end{align}	
We next need the following easy technical result which we prove later.	
	\begin{Claim}\label{Claim_3}
	There is a constant $c_3 > 0$ such that the following two relation hold for all $R$ sufficiently large
	\begin{equation}\label{eq:k_i+1_is_OK}
	 \floor{\gamma / \phi((R/B)^{-2})} + \frac{5c_*^{-1}}{1 - \kappa^{2\alpha_*}} \frac{1}{\phi((c_3 R)^{-2})} 
		\le \floor{\gamma / \phi(R^{-2})} 
	\end{equation}
	and
	\begin{equation}\label{eq:x_i+1_is_OK}
		\frac{R}{B} + \frac{2 c_3 R}{1 - \kappa} < R.
	\end{equation}
	\end{Claim}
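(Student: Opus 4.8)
The plan is to pick $c_3>0$ small — depending only on $\gamma$, $c_*$, $\alpha_*$, $B$ and $\kappa$, never on $R$ — and to verify the two inequalities of Claim \ref{Claim_3} separately, letting the scaling condition \eqref{eq:scaling} do all the work. First I would dispose of \eqref{eq:x_i+1_is_OK}: dividing by $R$ it reads $\frac1B+\frac{2c_3}{1-\kappa}<1$, and since $B\ge 3$ by \eqref{eq:def_of_B_and_b} and $\kappa=(1+\rho)^{-1/(d+2)}\in(0,1)$ (because $\rho>0$), this holds for every $R$ as soon as $c_3<\frac{1-\kappa}{2}\OBL{1-\frac1B}$, a strictly positive bound independent of $R$.

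For \eqref{eq:k_i+1_is_OK} I would first control the two floor terms. By the very choice $B=3\vee(2/c_*)^{1/2\alpha_*}$ in \eqref{eq:def_of_B_and_b} and scaling \eqref{eq:scaling} applied to the pair $R^{-2}\le B^2R^{-2}\le 1$ (legitimate once $R\ge B$), one gets $\phi((R/B)^{-2})\ge c_*B^{2\alpha_*}\phi(R^{-2})\ge 2\phi(R^{-2})$, hence
\[
	\floor{\gamma/\phi((R/B)^{-2})}\le \frac{\gamma}{\phi((R/B)^{-2})}\le \frac{\gamma}{2\phi(R^{-2})}.
\]
On the other hand, \eqref{eq:scaling} also gives $\phi(r)\le c_*^{-1}r^{\alpha_*}$ for $r\in(0,1]$, so $\phi(R^{-2})\to 0$, and therefore, once $R$ is so large that $\phi(R^{-2})\le\gamma/4$, we have $\floor{\gamma/\phi(R^{-2})}\ge \gamma/\phi(R^{-2})-1\ge \frac{3\gamma}{4\phi(R^{-2})}$. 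Combining these two estimates, \eqref{eq:k_i+1_is_OK} reduces to $\frac{5c_*^{-1}}{1-\kappa^{2\alpha_*}}\cdot\frac{1}{\phi((c_3R)^{-2})}\le \frac{\gamma}{4\phi(R^{-2})}$, i.e.\ to
\[
	\frac{\phi((c_3R)^{-2})}{\phi(R^{-2})}\ \ge\ \frac{20\,c_*^{-1}}{\gamma\,(1-\kappa^{2\alpha_*})}.
\]

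To close this I would apply scaling once more, now in the other direction: if $c_3\le 1$ then $R^{-2}\le(c_3R)^{-2}\le 1$ for $R\ge 1/c_3$, so \eqref{eq:scaling} yields $\phi((c_3R)^{-2})/\phi(R^{-2})\ge c_*\,c_3^{-2\alpha_*}$, and the last display holds provided $c_3\le\OBL{\frac{c_*^{2}\gamma\,(1-\kappa^{2\alpha_*})}{20}}^{1/2\alpha_*}$; this bound is strictly positive because $\kappa\in(0,1)$ forces $1-\kappa^{2\alpha_*}>0$. I would then set $c_3$ to be the minimum of $1$ and the two positive upper bounds just obtained, and take $R$ large enough that simultaneously $R\ge B$, $R\ge 1/c_3$ and $\phi(R^{-2})\le\gamma/4$; with these choices both \eqref{eq:k_i+1_is_OK} and \eqref{eq:x_i+1_is_OK} hold. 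Together with \eqref{eq:upper_bound_for_ki}, \eqref{eq:upper_bound_for_|xi-z|} and the inequality $r_1\le c_3R$ (secured by choosing $K_1$ large in the outer argument), this is exactly what is needed to place $(k_{i+1},x_{i+1})$ in $Q(\floor{\gamma/\phi(R^{-2})},z,R)$.

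I do not expect any genuine obstacle here: the argument is a bookkeeping computation. The only points demanding care are applying \eqref{eq:scaling} in the correct direction at each step (enlarging the argument makes $\phi$ larger, and it is the combination of this monotonicity with the lower scaling bound $c_*$ that yields the gains $\phi((R/B)^{-2})\ge 2\phi(R^{-2})$ and the bound $c_*c_3^{-2\alpha_*}$), and checking that the three lower thresholds on $R$ can be met at once, which is immediate because $\phi(R^{-2})\to 0$ as $R\to\infty$.
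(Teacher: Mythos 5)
Your proposal is correct and follows essentially the same route as the paper: both reduce \eqref{eq:k_i+1_is_OK} to a lower bound on the difference of floors obtained from the scaling condition together with the specific choice of $B$ in \eqref{eq:def_of_B_and_b}, then reduce further to a bound on the ratio $\phi((c_3R)^{-2})/\phi(R^{-2})$ that is closed by a second application of lower scaling, and finally incorporate the elementary constraint coming from \eqref{eq:x_i+1_is_OK} into the choice of $c_3$. The only cosmetic difference is that the paper cites the intermediate inequalities \eqref{al:helpful_result} and \eqref{eq:R0_big_enough} from the Appendix (Lemma \ref{lm:R_0}), leading to a floor-difference lower bound of the form $\gamma/(2\phi(B^2R^{-2}))$, whereas you rederive a comparable bound $\gamma/(4\phi(R^{-2}))$ directly — the two differ only by a scaling constant and yield the same family of admissible $c_3$'s up to numerical factors.
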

	
At last, let $c_3$ be a constant as in Claim \ref{Claim_3} and suppose that $K_1 \ge (c_2 / c_3)^{d + 2}$. This would mean that $r_1 \le c_3 R$. By \eqref{eq:upper_bound_for_ki}, \eqref{eq:upper_bound_for_|xi-z|} and Claim \ref{Claim_3}, $(k_{i + 1}, x_{i + 1}) \in Q(\floor{\gamma / \phi(R^{-2})}, z, R)$. However, by \eqref{eq:ri_ge_R0} $r_{i } \ge 3$ for all $i$. On the other hand, if we let $i$ tend to infinity in \eqref{eq:r_i+1-r_1}, we would obtain that $r_i$ approaches zero. This is a contradiction and whence $K_1 \le (c_2 / c_3)^{d + 2}$, which means that \eqref{eq:PHI} holds with $C_{PH} = (c_2 / c_3)^{d + 2}$ and for all $R$ large enough.
To finish the prove we are left to establish Claims \ref{Claim_1} and \ref{Claim_3}. 
\vspace*{0,2cm}

\noindent \textit{Proof of Claim \ref{Claim_1}}.	
We set
	\begin{equation}\label{c_2Constant}
		c_2 = 2 R_0 \sqrt{a} \OBL{\frac{1}{\theta_2 \zeta}}^{1/(d + 2)} \vee B\sqrt{b} \OBL{\frac{3}{\theta_1 \zeta	\gamma c'}}^{1/(d + 2)},
	\end{equation}
where $\gamma$ is the constant from Theorem \ref{tm:maximal_inequality}, $c'$ is the constant from \eqref{eq:volume_growth} and $b$ is defined in \eqref{eq:def_of_B_and_b}. 
We show that the claim is true with such a constant. We start by showing \eqref{eq:key_ineq_1}. 
Combining \eqref{eq:phi(v)/phi(u)_le_v/u} and \eqref{eq:r_R_K_C2} we get
	\begin{align*}
		\frac{j(2 \sqrt{a} R)}{j(r/R_0)}
		&  = (2 R_0 \sqrt{a})^{-d} \OBL{\frac{R}{r}}^{-d} \frac{\phi((2\sqrt{a}R)^{-2})}{\phi((r/R_0)^{-2})} 
		\ge  \frac{1}{(2 R_0 \sqrt{a})^{d + 2}} \OBL{\frac{r}{R}}^{d + 2} \\
		& > \frac{1}{(2 R_0 \sqrt{a})^{d + 2}} \frac{(2 R_0 \sqrt{a})^{d + 2}}{\theta_2 \zeta} K^{-1} = \frac{1}{\theta_2 \zeta K}.
	\end{align*}
Similarly, to prove \eqref{eq:key_ineq_2} we apply \eqref{eq:volume_growth} and \eqref{eq:phi(v)/phi(u)_le_v/u} and obtain
	\begin{align*}
		\aps{Q(0, x, r/B)} j(\sqrt{b} R)  
		& \geq  \frac{\gamma c' b^{-d/2}}{B^d} \OBL{\frac{r}{R}}^{d} \frac{\phi((\sqrt{b} R)^{-2})}{\phi((r/B)^{-2})} \\
		& \ge \frac{\gamma c'}{(B \sqrt{b})^{d + 2}} c_2^{d + 2} K^{-1} 
		> \frac{\gamma c'}{(B \sqrt{b})^{d + 2}} \frac{3 (B \sqrt{b})^{d + 2}}{\theta_1 \zeta \gamma c'} K^{-1} = \frac{3}{\theta_1 \zeta K}.
	\end{align*}

\noindent \textit{Proof of Claim \ref{Claim_3}}.	
Notice that \eqref{eq:k_i+1_is_OK} is equivalent to
	\begin{equation*}
		\frac{5c_*^{-1}}{1 - \kappa^{2\alpha_*}} \frac{1}{\phi((c_3 R)^{-2})} \le \floor{\gamma / \phi(R^{-2})} - \floor{\gamma / \phi((R/B)^{-2})}.
	\end{equation*}
	Using \eqref{al:helpful_result} and \eqref{eq:R0_big_enough} we get
	\begin{align*}
		\floor{\gamma / \phi(R^{-2})}
		 - \floor{\gamma / \phi((R/B)^{-2})} 
		\ge  \frac{\gamma}{2 \phi(B^2 R^{-2})}.
	\end{align*}
	Hence, it is enough to define $c_3$ for which
	\begin{equation}\label{eq:k_i+1_is_OK-pom}
\frac{\phi(B^2 R^{-2})}{\phi(c_3^{-2} R^{-2})} \le \frac{\gamma c_* (1 - \kappa^{2\alpha_*})}{10}.
	\end{equation}
This can be achieved by setting
	\begin{equation*}
		c_3 := B^{-1}  \left( 1\wedge  \OBL{\gamma c_*^2 (1 - \kappa^{2\alpha_*})/10}^{1/2\alpha_*} \wedge (B - 1) (1 - \kappa)/3\right).
	\end{equation*}
Indeed, with such a choice, for $R$ sufficiently large we apply the scaling condition and get
	\begin{equation*}
		\frac{\phi(B^2 R^{-2})}{\phi(c_3^{-2} R^{-2})} \le \frac{1}{c_*} (c_3 B)^{2\alpha_*}.
	\end{equation*}
	Clearly \eqref{eq:k_i+1_is_OK-pom} follows. With such $c_3$ the validity of \eqref{eq:x_i+1_is_OK} is obvious.
\end{proof}

\section{Lower bound}\label{sec:lower}
The aim of this section is to prove the global lower estimate. We use a probabilistic method based on the parabolic Harnack inequality.

\begin{TM}\label{thm:lower_bound}
Under our assumptions, for some constant $C> 0$
\begin{equation}\label{eq:lower_bound}
	p^{\phi}(n, x, y) \ge C \Big( \big(\phi^{-1}(n^{-1})\big)^{d/2} \wedge \frac{n}{\aps{x - y}^d} \phi(\aps{x - y}^{-2})\Big),
\end{equation}
for all $x, y \in \bbZ^d$, for all $n \in \bbN$.
\end{TM}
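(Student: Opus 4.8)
The plan is to split into a \emph{near-diagonal} regime and a \emph{``one big jump''} regime. Write $r_n := \OBL{\phi^{-1}(n^{-1})}^{-1/2}$, so that the first term of \eqref{eq:lower_bound} is $r_n^{-d}$; recall $r_n \ge 1$ and, by \eqref{eq:scaling_for_inverse}, $r_m \asymp r_n$ whenever $n/2 \le m \le n$. Since for $x = y$ the inequality is exactly Theorem \ref{tm:on-diag_bounds}, I assume $x \neq y$, and also $n \ge n_1$ for a fixed threshold $n_1$ (the finitely many smaller values of $n$ are treated at the end). I would then prove:
\begin{enumerate}[(i)]
\item $p^{\phi}(n, x, y) \ge c_1 r_n^{-d}$ whenever $\aps{x - y} < C_1 r_n$;
\item $p^{\phi}(n, x, y) \ge c_2\, n\, j(\aps{x - y})$ whenever $\aps{x - y} \ge C_1 r_n$.
\end{enumerate}
Since both $r_n^{-d}$ and $n\,j(\aps{x-y})$ are $\ge r_n^{-d} \wedge n\,j(\aps{x-y})$, (i) and (ii) together give \eqref{eq:lower_bound} for $n \ge n_1$.

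For (i) I would feed the parabolic Harnack inequality with the function $q(k, w) = p^{\phi}(n - k, w, y)$, which by Lemma \ref{lm:hk_is_parabolic_function} is non-negative, bounded, and parabolic on $\{0, 1, \ldots, n\} \times \bbZ^d$. Applying \eqref{eq:PHI} with $z = x$ and bounding its right-hand maximum below by the value at the single point $\big(\floor{\gamma / \phi(R^{-2})}, y\big)$ of the cylinder $Q(\floor{\gamma / \phi(R^{-2})}, x, R/B)$ gives, as long as $y \in B(x, R/B)$ and $\floor{\gamma / \phi((\sqrt{b} R)^{-2})} \le n$,
\begin{equation*}
	p^{\phi}(n, x, y) = q(0, x) \ge \min_{w \in B(x, R/B)} q(0, w) \ge C_{PH}^{-1}\, q\big(\floor{\gamma / \phi(R^{-2})}, y\big) = C_{PH}^{-1}\, p^{\phi}\big(n - \floor{\gamma / \phi(R^{-2})},\, 0\big).
\end{equation*}
Using \eqref{eq:scaling_for_inverse} one checks that there is a constant $C' > 0$ such that, for $n$ large, every admissible radius $R$ in a range $[R_*, C' r_n]$ (with $R_*$ large enough for Theorem \ref{tm:PHI}) satisfies both $\floor{\gamma / \phi((\sqrt{b} R)^{-2})} \le n$ and $n - \floor{\gamma / \phi(R^{-2})} \ge n/2$; then by Theorem \ref{tm:on-diag_bounds} and $r_m \asymp r_n$ for $m \ge n/2$ the last term above is $\asymp r_n^{-d}$. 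Choosing such an $R$ that moreover satisfies $R > B\aps{x - y}$ — possible precisely when $\aps{x - y} < C_1 r_n$ for a suitable $C_1$ of order $C'/B$ — proves (i).

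For (ii), fix $\aps{x - y} \ge C_1 r_n$, put $\rho_1 = (C_1/4)\, r_n$, pick a small $\delta > 0$ and set $\rho_2 = \delta\, r_n$ so small that $\rho_1 + \rho_2 < \aps{x - y}$ (hence $B(x, \rho_1) \cap B(y, \rho_2) = \emptyset$ and $\aps{x' - y'} \asymp \aps{x - y}$ for $x' \in B(x, \rho_1)$, $y' \in B(y, \rho_2)$) and $\rho_2 < C_1 r_m$ for all $m \ge n/2$; let $m_0 = \floor{\gamma / \phi((2\rho_1)^{-2})}$ and $\tau = \min\{k \ge 0 : S^{\phi}_k \notin B(x, \rho_1)\}$. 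Disjointness of the balls makes the events $\{\tau > i - 1,\ S^{\phi}_i \in B(y, \rho_2),\ S^{\phi}_n = y\}$, $1 \le i \le \floor{m_0/2}$, pairwise disjoint, so the Markov property at time $i$ yields
\begin{equation*}
	p^{\phi}(n, x, y) \ge \sum_{i = 1}^{\floor{m_0/2}}\ \sum_{\substack{x' \in B(x, \rho_1) \\ y' \in B(y, \rho_2)}} \bbP^x\big(\tau > i - 1,\ S^{\phi}_{i - 1} = x'\big)\, p^{\phi}(1, x', y')\, p^{\phi}(n - i, y', y).
\end{equation*}
Here $p^{\phi}(1, x', y') \asymp j(\aps{x - y})$ by \eqref{eq:one_step_prob_estimates} and \cite[Lemma 2.4]{MS18}; $p^{\phi}(n - i, y', y) \gtrsim r_n^{-d}$ by (i), since $\aps{y' - y} < \rho_2 < C_1 r_{n - i}$ and $r_{n-i} \asymp r_n$; $\sum_{x'} \bbP^x(\tau > i - 1, S^{\phi}_{i - 1} = x') = \bbP^x(\tau > i - 1) \ge 3/4$ for $i - 1 \le m_0$ by Theorem \ref{tm:maximal_inequality}; and $\aps{B(y, \rho_2)} \asymp r_n^d$. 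Finally $\phi((2\rho_1)^{-2}) = \phi\big((C_1/2)^{-2} \phi^{-1}(n^{-1})\big) \le (C_1/2)^{-2}\, n^{-1}$ by $\phi(\lambda t) \le \lambda \phi(t)$ ($\lambda \ge 1$), so $\floor{m_0/2} \ge c\, n$ for $n$ large; multiplying the four bounds gives $p^{\phi}(n, x, y) \gtrsim n\, j(\aps{x - y})$, which is (ii).

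It remains to treat $1 \le n < n_1$: here $p^{\phi}(n, x, y) \ge p^{\phi}(1, x, y)\, p^{\phi}(n - 1, y, y) \ge C_1^{\,n - 1}\, p^{\phi}(1, x, y) \asymp j(\aps{x - y})$ by the semigroup property, \eqref{eq:one_step_prob_estimates} and Lemma \ref{lm:lower_bound_for_P(X_1=0)} iterated, and since $n < n_1$ this is $\ge n_1^{-1}\, n\, j(\aps{x - y}) \ge n_1^{-1}\big(r_n^{-d} \wedge n\, j(\aps{x-y})\big)$. I expect the delicate point to be step (i): one has to check that the competing demands on $R$ (large enough for Theorem \ref{tm:PHI}; of order $r_n$, so that both the parabolicity window and the leftover time $n - \floor{\gamma/\phi(R^{-2})}$ stay under control; and larger than $B\aps{x-y}$) can be met simultaneously, and — most importantly — that the near-diagonal radius $C_1 r_n$ they produce is still large enough for the localisation time $m_0$ in step (ii) to be of order $n$. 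The latter is exactly what the Bernstein inequality $\phi(\lambda t) \le \lambda\phi(t)$ secures, and it is what prevents a gap between the two regimes.
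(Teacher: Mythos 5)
Your proposal is correct, and it follows a genuinely different route from the paper's. The paper splits the proof into \emph{three} regimes: a near--diagonal estimate for $\aps{x-y}\le d_1 r_n$ with $d_1 = 1/B\le 1/3$ (via the parabolic Harnack inequality and the on--diagonal bound), a far estimate for $\aps{x-y}\ge d_2 r_n$ with $d_2 = 3c_3$ where $c_3\ge 1$ is the constant forced by the Pruitt concentration bound (via time--reversal of the walk: stay in $B(x,c_3 r_n)$, one jump, then stay in $B(y,c_3 r_n)$ and land on $y$), and an intermediate estimate for $d_1 r_n < \aps{x-y} < d_2 r_n$, which is a separate Chapman--Kolmogorov argument needed because necessarily $d_1 < d_2$. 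Your plan closes that gap and has only \emph{two} regimes. The reason it works is that instead of time--reversal you feed the already--proved near--diagonal bound (i) to the post--jump part of the path: you only need the walk to enter the tiny ball $B(y,\delta r_n)$ at a single time $i$, not to remain in a $c_3 r_n$--ball from time $i$ to $n$; the remaining $n-i$ steps are then handled by $p^{\phi}(n-i,y',y)\gtrsim r_n^{-d}$. Since (i) is valid for $\aps{x-y}< C_1 r_n$ and (ii) is engineered to work for $\aps{x-y}\ge C_1 r_n$ (by choosing $\rho_1=(C_1/4)r_n$ and $\delta<3C_1/4$), the two regimes are contiguous. The price is that your (ii) is \emph{not} independent of (i) the way the paper's far estimate is of its near--diagonal one, and you must verify, as you note, that $m_0=\lfloor\gamma/\phi((2\rho_1)^{-2})\rfloor\asymp n$ even though $\rho_1$ may be a small multiple of $r_n$; this is indeed rescued by $\phi(\lambda t)\le\lambda\phi(t)$ for $\lambda\ge 1$, and is the hinge of the whole scheme. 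Two small points: in (i) your parabolic function $q(k,w)=p^\phi(n-k,w,y)$ with a free radius $R\in[R_*,C'r_n]$ gives the bound at time $n$ directly, while the paper fixes $R$ by $n=\gamma/\phi(R^{-2})$, uses $q(k,w)=p^\phi(bn-k,x,w)$, and then fills in the integers between multiples of $b$ by one--step iterations --- these are roughly equally involved, and your version needs the admissibility check of the $R$--range that you correctly flag. Finally, in the small--$n$ case the constant you write $C_1$ (from Lemma~\ref{lm:lower_bound_for_P(X_1=0)}) clashes notationally with the radius constant $C_1$ you fixed at the start; it is worth relabelling one of them.
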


\begin{proof}
Let us set 
\begin{align*}
r_n = \frac{1}{\sqrt{\phi^{-1}(n^{-1})}},\quad n\geq 1.
\end{align*}
\textit{Near-diagonal bound}: We start by proving that there exists a constant $C > 0$ such that
	\begin{equation}\label{range1}
		p^{\phi}(n, x, y) \ge C \OBL{\phi^{-1}(n^{-1})}^{d/2},
	\end{equation}
for $n \in \bbN$ and $ \aps{x - y} \le d_1 r_n$, where $d_1>0$ is a constant to be specified.
We take $n\in \mathbb{N}$ and choose $R$ to satisfy $n = \gamma / \phi(R^{-2})$, where $\gamma$ is the constant from Theorem \ref{tm:maximal_inequality}. Let $q(k, w) = p^{\phi}(bn - k, x, w)$, where $b$ is the constant from \eqref{eq:def_of_B_and_b}.  By Lemma \ref{lm:hk_is_parabolic_function}, $q$ is parabolic on $\{0, 1, 2, \ldots, bn\} \times \bbZ^d$. Since by our choice 
$bn \ge  \gamma / \phi((\sqrt{b}R)^{-2})$, 
%
$q$ is also parabolic on $\{0, 1, 2, \ldots, \floor{\gamma / \phi((\sqrt{b}R)^{-2})}\} \times \bbZ^d$. We now choose $d_1 = 1/B$ which implies that $B(y, d_1 r_n) \subseteq B(y, R/B)$ and whence $(n, x) \in Q(\floor{\gamma / \phi(R^{-2})}, y, R/B)$. 
%
By choosing $n$ big enough we can make $R$ large enough and this allows us to  apply Theorem \ref{tm:PHI}. Thus, there is $n_0\geq 1$ such that for all $n\geq n_0$, 
	\begin{align*}
		\min_{z \in B(y, d_1 r_n)} p^{\phi}(bn, x, z)
		& \ge \min_{z \in B(y, R/B)} p^{\phi}(bn, x, z) = \min_{z \in B(y, R/B)} q(0, z) \nonumber \\
		& \ge C_{PH}^{-1} \max_{(k, z) \in Q(\floor{\gamma / \phi(R^{-2})}, y, R/B)} q(k, z)\\
		&\ge  C_{PH}^{-1} q(n, x).
	\end{align*}
Hence, by Theorem \ref{tm:on-diag_bounds},
	\begin{align*}
		\min_{z \in B(y, d_1 r_n)} p^{\phi}(bn, x, z)
		& \ge C_{PH}^{-1} q(n, x) = C_{PH}^{-1} p^{\phi}((b - 1)n, x, x) \\
		& \ge C_{PH}^{-1} c_1 \OBL{\phi^{-1}(((b - 1)n)^{-1})}^{d/2} \\
		& \ge C_{PH}^{-1} c_1 \OBL{\phi^{-1}((bn)^{-1})}^{d/2},
	\end{align*}
for all $x\in \mathbb{Z}^d$ and $n\geq n_0$. Hence, we have proved \eqref{range1} for all integers of the form $bn$ with $n\geq n_0$.
For the remaining values of $n$ 
between $bn_0$ and $b(n_0+1)$ (and so forth)	
	we use Lemma \ref{lm:lower_bound_for_P(X_1=0)} to get
	\begin{align*}
		p^{\phi}(bn + 1, x, y)
		& = \sum_{z \in \bbZ^d} p^{\phi}(bn, x, z) p^{\phi}(z, y) \ge p^{\phi}(bn, x, y) p^{\phi}(y, y) \ge C_1 p^{\phi}(bn, x, y) \\
		& \ge C_1 c_2 \OBL{\phi^{-1}((bn)^{-1})}^{d/2} \ge C_1 c_2 \OBL{\phi^{-1}((bn + 1)^{-1})}^{d/2}.
	\end{align*}
For $n < bn_0$ we apply the above procedure together with \eqref{eq:one_step_prob_estimates}, and this gives \eqref{range1} for all $n$.
\vskip 0.1in
\noindent \textit{Estimate away from the diagonal}: 
Let $j(r)$ be the function defined at \eqref{j_function}.
We now show that there is $C > 0$ such that
	\begin{equation}\label{range2}
	p^{\phi}(n, x, y) \ge C n j(\aps{x - y}),
\end{equation}
for all $n\in \mathbb{N}$ and $|x-y|\geq d_2r_n$, where a constant $d_2>0$ will be specified.
We first claim that there is a constant
$c_3>0$ such that for all $x \in \bbZ^d$ and for all $k, n \in \bbN$
\begin{equation}\label{eq:p_2n_ge_pom1}
		\bbP^x\big(\max_{j \le k} \aps{S^{\phi}_j - x} \ge c_3 r_n\big) \le \frac{1}{2} \frac{k}{n}.
	\end{equation}
	By Lemma \ref{lm:Pruit_bound_general_form} and Lemma \ref{lm:h_le_phi} we get
	\begin{align*}
	 \bbP^x(\max_{j \le k} \aps{S^{\phi}_j - x} \ge c_3 r_n) \le c_4 k \phi(c_3^{-2} r_n^{-2}).
	\end{align*}
	This is true for all constants $c_3 > 0$. We define specific constant $c_3$ as
	\begin{equation*}
		c_3 = 1\vee (2c_4 / c_*)^{1/2\alpha_*}.
	\end{equation*}
	Since $c_3 \ge 1$ we can use lower scaling to obtain \eqref{eq:p_2n_ge_pom1}.


We now set $d_2 = 3c_3$ and we notice that  $d_1 < d_2$, as $d_1 = 1/B\le 1/3$. 
Let
\begin{align*}
\tau(x, r) = \inf\{k: S^{\phi}_k \notin B(x, r)\}
\end{align*}
and consider a family of sets
	\begin{equation}\label{eq:def_of_Ak}
		A_k = \{\tau(x, c_3 r_n) = k, S^{\phi}_k, S^{\phi}_{k + 1}, \ldots, S^{\phi}_{n - 1} \in B(y, c_3 r_n), S^{\phi}_n = y\},
	\end{equation}
	for $k = 1, 2, \ldots, n$. Observe that
	\begin{equation*}
		p^{\phi}(n, x, y) = \bbP^x(S^{\phi}_n = y) \ge \sum_{k = 1}^{n} \bbP^x(A_k)
	\end{equation*}
and our task is to estimate the last sum from below. By the time reversal of the random walk we get
\begin{align}\label{al:calculation_of_P(Ak)-begin}
		\bbP^x(A_k)
		&=\sum_{\substack{x_{k - 1} \in B(x, c_3 r_n) \\ x_k \in B(y, c_3 r_n)}}  \left( \bbP^x(\tau(x, c_3 r_n) > k - 1, S^{\phi}_{k - 1} = x_{k - 1}) p^{\phi}(x_{k - 1}, x_k)\right. \nonumber \\
	 & \left. \qquad \qquad \qquad  \times \bbP^y(\tau(y, c_3 r_n) > n - k, S^{\phi}_{n - k} = x_k)\right).
\end{align}
For $x_{k - 1} \in B(x, c_3 r_n)$, $x_k \in B(y, c_3 r_n)$ and $\aps{x - y} \ge d_2 r_n = 3c_3 r_n$, we have
	\begin{align*}
		\aps{x_{k - 1} - x_k}
\le 3 c_3 r_n + \aps{x - y} \le 2\aps{x - y},
	\end{align*}
	and whence, for $\aps{x - y} \ge d_2 r_n$, by using \eqref{eq:one_step_prob_estimates}
	\begin{equation}\label{eq:p(x_k-1,x_k)}
		p^{\phi}(x_{k - 1}, x_k) \ge c_5 j(\aps{x - y}).
	\end{equation}
Thus 
	\begin{align}\label{al:calculation_of_P(Ak)-middle}
		\bbP^x(A_k) 
		\ge 
		c_5 j(\aps{x - y}) & \bbP^x(\tau(x, c_3 r_n) > k - 1) \bbP^y(\tau(y, c_3 r_n) > n - k) .
	\end{align}
Using \eqref{eq:p_2n_ge_pom1} we get
	\begin{equation*}
		\bbP^x(A_k) \ge c_5 \OBL{1 - \frac{1}{2} \frac{k - 1}{n}} \OBL{1 - \frac{1}{2} \frac{n - k}{n}} j(\aps{x - y})
		\ge 
		\frac{c_5}{4} j(\aps{x - y})
	\end{equation*}
and \eqref{range2} follows
	for all $n \in \bbN$ and $\aps{x - y} \ge d_2 r_n$.
\vskip 0.1in
\noindent \textit{Intermediate estimate}: We finally show that 
	\begin{equation}\label{eq:lower_bound_in_3rd_region}
		p^{\phi}(n, x, y) \ge C \OBL{\phi^{-1}(n^{-1})}^{d/2},
	\end{equation}
for all $n \in \mathbb{N}$ and for $d_1r_n < \aps{x - y} < d_2r_n$.
For any $1\le K\le n$ we can write
	\begin{align*}
		p^{\phi}(n, x, y)
		& \ge \sum_{z\in B(y, d_1 r_n/2)} p^{\phi}(\floor{n/K}, x, z) p^{\phi}(n - \floor{n/K}, z, y).
	\end{align*}	
We now state the claim which we prove later.
\begin{Claim}\label{Claim_4}
Let us set
	\begin{equation}\label{eq:def_of_K}
		K = 2\vee c^* \OBL{\frac{2d_2}{d_1}}^{2\alpha^*}\vee \OBL{1 - \frac{4^{-\alpha_*}}{c_*}}^{-1}.
	\end{equation}
Then for all $n\geq K$ the following inequalities hold
	\begin{align*}
		\frac{d_1 r_n }{2}  \ge d_2 r_{\floor{n/K}},\quad 
		r_{n - \floor{n/K}} \ge \frac{r_n}{2} .
	\end{align*}
\end{Claim}
\noindent Thus, if $\aps{x - y} > d_1 r_n$ and $z \in B(y, d_1 r_n/2)$ then
	\begin{align*}
		\aps{x - z}  \ge d_2 r_{\floor{n/K}} \quad \mathrm{and}\quad 
		\aps{y - z}  \le d_1 r_{n - \floor{n/K}}.
	\end{align*}
Combining this with \eqref{range1} and \eqref{range2} we get
	\begin{equation*}
		p^{\phi}(n, x, y) \ge c_6 \sum_{z \in B(y, d_1 r_n / 2)}  \floor{n/K} j(\aps{x - z}) \OBL{\phi^{-1}((n - \floor{n/K})^{-1})}^{d/2}.
	\end{equation*}	
Since $\aps{x - y} < d_2 r_n$, for every $z \in B(y, d_1 r_n / 2)$ we get $\aps{x - z} \le c_7 r_n$, where $c_7 = d_1/2 + d_2 \ge 1$. By monotonicity of $j$ and  \eqref{eq:phi(v)/phi(u)_le_v/u} we get
\begin{align*}
		j(\aps{x - z})
\geq 	
	c_7^{-d - 2} \OBL{\phi^{-1}(n^{-1})}^{d/2} n^{-1}
\end{align*}
and whence
	\begin{align}\label{final1}
		p^{\phi}(n, x, y)
		& \ge 
	c_8 \FLOOR{n/K }n^{-1} \OBL{\phi^{-1}(n^{-1})}^{d/2} \OBL{\phi^{-1}\OBL{(n - \floor{n/K})^{-1}}}^{d/2} \APS{B(y, d_1 r_n/2)} \nonumber \\
		& \geq c_9 \FLOOR{n/K}
		n^{-1} \OBL{\frac{\phi^{-1}\OBL{(n - \floor{n/K})^{-1}}}{\phi^{-1}(n^{-1})}}^{d/2} 
		\OBL{\phi^{-1}(n^{-1})}^{d/2} .
	\end{align}
Clearly $ \FLOOR{n/K} n^{-1} \ge \frac{1}{2K}$ and, by
 \eqref{eq:scaling_for_inverse},
	\begin{align*}
		\frac{\phi^{-1}\OBL{(n - \floor{n/K})^{-1}}}{\phi^{-1}(n^{-1})}
		& \ge \OBL{\frac{1}{c^*}}^{1/\alpha^*} \OBL{\frac{n - \floor{n/K}}{n}}^{-1/\alpha^*}  \\
		& \ge  \OBL{\frac{1}{c^* - c^* / (2K)}}^{1/\alpha^*}.
	\end{align*}
Combining these two bounds with \eqref{final1} we obtain \eqref{eq:lower_bound_in_3rd_region} for all $n \ge K$ and for $d_1 r_n < \aps{x - y} < d_2 r_n$.	For $n<K$ we proceed as in the end of the proof of near-diagonal bound.
\vskip 0.1in
\noindent \textit{Proof of Claim \ref{Claim_4}}.	
	Since $r_{n/K} \ge r_{\floor{n/K}}$, it is enough to find $K$ such that
	\begin{equation*}
		\frac{d_1}{2}r_n \ge d_2 r_{n/K}\Longleftrightarrow  \frac{\phi^{-1}((n/K)^{-1})}{\phi^{-1}(n^{-1})} \ge \OBL{\frac{2d_2}{d_1}}^2.
	\end{equation*}
By \eqref{eq:scaling_for_inverse}, for $n \ge K$,
	\begin{equation*}
		\frac{\phi^{-1}((n/K)^{-1})}{\phi^{-1}(n^{-1})} \ge \OBL{\frac{1}{c^*}}^{1/\alpha^*} \OBL{\frac{(n/K)^{-1}}{n^{-1}}}^{1/\alpha^*} = \OBL{\frac{K}{c^*}}^{1/\alpha^*},
	\end{equation*}
and whence $K$ has to satisfy $K \ge c^* \OBL{\frac{2d_2}{d_1}}^{2 \alpha^*}\!\!$.
Similarly, as $r_{n - \floor{n/K}} \ge r_{n - n/K}$, it is enough to have $K$ such that
	\begin{equation*}
		r_{n - n/K} \ge \frac{1}{2}r_n 
\Longleftrightarrow
		 \frac{\phi^{-1}((n - n/K)^{-1})}{\phi^{-1}(n^{-1})} \le 4.
	\end{equation*}
	We assume that $K\ge 2$ and thus \eqref{eq:scaling_for_inverse} implies
	\begin{equation*}
		\frac{\phi^{-1}((n - n/K)^{-1})}{\phi^{-1}(n^{-1})} \le \OBL{\frac{1}{c_*}}^{1/\alpha_*} \OBL{\frac{(n - n/K)^{-1}}{n^{-1}}}^{1/\alpha_*} = c_*^{-1/\alpha_*} (1 - 1/K)^{-1/\alpha_*}.
	\end{equation*}
We conclude that $K$ has to be such that $K \ge \OBL{1 - \frac{4^{-\alpha_*}}{c_*}}^{-1}$.\\
\vspace*{0,1cm}

\noindent Finally, combining inequalities \eqref{range1}, \eqref{range2} and \eqref{eq:lower_bound_in_3rd_region} we obtain \eqref{eq:lower_bound} and the proof is finished.
\end{proof}

\section{Upper bound}\label{sec:off-diagonal_bounds}

In this section we aim to prove the global upper estimates for the transition probabilities of the random walk $S^\phi _n$. Our strategy is to study the corresponding continuous time random walk and to estimate its transition kernel and hitting time of a ball, and then to use these results to get similar identities in the discrete time. The main reason why we switch to the continuous time random walk is to prove Proposition \ref{prop:MSC-3} which is a key result to establish the off-diagonal upper estimates which are our goal. Another possible approach would be to obtain the estimate for the hitting time of a ball from Proposition \ref{prop:MSC-3} directly in the discrete setting. This, however, seems to be a hard task and we do not address this problem in the present paper.
%
%

\subsection{Estimates for the continuous time random walk}
We study the continuous time version of the random walk $S^{\phi}_n$ which is constructed in the standard way, that is we take $(U_i)_{i \in \bbN}$ to be a sequence of independent, identically distributed exponential random variables with parameter $1$ which are independent of $S^{\phi}$. Let $T_0 = 0$ and $T_k = \sum_{i = 1}^k U_i$. Then we define $Y_t = S^{\phi}_n$ if $T_n \le t < T_{n + 1}$. 
Equivalently, we can take $(N_t)_{t \ge 0}$ to be a homogeneous Poisson process with intensity $1$ independent of the random walk $S^{\phi}$ and then $Y_t = S^{\phi}_{N_t}$. 
The transition probability of the process $Y$ is denoted by $q(t,x,y)=\bbP^x(Y_t = y)$.
We want to find the upper bound for $q(t,x,y)$. 
\begin{PROP}\label{prop:UHK_for_Y}
	There is a constant $c>0$ such that
	\begin{equation}\label{eq:UHK_for_Y}
		q(t, x, y) \le c \Big(\big(\phi^{-1}(t^{-1})\big)^{d/2} \wedge \frac{t}{\aps{x - y}^d} \phi(\aps{x - y}^{-2})\Big),
	\end{equation}
for all $x, y \in \bbZ^d$ and for all $t \ge 1$.
\end{PROP}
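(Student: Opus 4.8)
The plan is to view $Y$ as a symmetric pure-jump Markov process on $\bbZ^d$ equipped with the counting measure and the jump kernel $J(x,y)=p^{\phi}(x,y)$, and to verify the hypotheses from which the stability theory of \cite{CKW16} produces the upper heat kernel bound; the relevant space--time scale function there is $\Phi(r)=1/\phi(r^{-2})$, which is increasing with $\Phi^{-1}$ defined on $[1,\infty)$. By \eqref{eq:volume_growth} the space has $d$-dimensional volume growth, and \eqref{eq:scaling} turns into the two-sided power bounds $c_*(R/r)^{2\alpha_*}\le \Phi(R)/\Phi(r)\le c^*(R/r)^{2\alpha^*}$ for $1\le r\le R$, so $\Phi$ has scaling indices in $(0,2)$. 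By \eqref{eq:one_step_prob_estimates}, $J(x,y)\asymp |x-y|^{-d}\phi(|x-y|^{-2})=1/\big(V(|x-y|)\Phi(|x-y|)\big)$, and Lemma \ref{lm:Lemma21_from_CKW} (with the analogous lower bound, obtained the same way) gives $\sum_{z\notin B(x,r)}J(x,z)\asymp 1/\Phi(r)$. Since $1/V(\Phi^{-1}(t))=(\phi^{-1}(t^{-1}))^{d/2}$ and $t/\big(V(|x-y|)\Phi(|x-y|)\big)=t\,|x-y|^{-d}\phi(|x-y|^{-2})$, the claim \eqref{eq:UHK_for_Y} is exactly the bound $\mathrm{UHK}(\Phi)$ in that framework.

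\textbf{On-diagonal bound.} First I would transfer Theorem \ref{tm:on-diag_bounds} through the Poisson clock. From $q(t,x,x)=e^{-t}\sum_{n\ge0}\tfrac{t^n}{n!}p^{\phi}(n,x,x)$ and $p^{\phi}(n,x,x)\le c(\phi^{-1}(n^{-1}))^{d/2}\le c$ for $n\ge1$ (the $n=0$ term being $e^{-t}$), I split the sum at $n\asymp t$: for $n\le t/2$ the Poisson tail is exponentially small and dominates the bounded factor, whereas for $n>t/2$ the doubling of $\phi^{-1}$ from \eqref{eq:scaling_for_inverse} gives $(\phi^{-1}(n^{-1}))^{d/2}\le C(\phi^{-1}(t^{-1}))^{d/2}$; since $(\phi^{-1}(t^{-1}))^{d/2}$ decays only polynomially in $t$ by \eqref{eq:scaling_for_inverse}, the exponentially small part is absorbed. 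Hence $q(t,x,x)\le c(\phi^{-1}(t^{-1}))^{d/2}$ for $t\ge1$, and then $q(t,x,y)\le q(t/2,x,x)^{1/2}q(t/2,y,y)^{1/2}$ (or Corollary \ref{rem:off-diag_bd_from_on-diag}) yields $q(t,x,y)\le c(\phi^{-1}(t^{-1}))^{d/2}$ for all $x,y$ and $t\ge1$, i.e.\ the first term of the minimum, which is also the Faber--Krahn input $\mathrm{FK}(\Phi)$ of \cite{CKW16}.

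\textbf{Exit times and the off-diagonal bound.} Transferring Theorem \ref{tm:maximal_inequality} through the Poisson clock, using $\tau^{Y}_{B(x,r)}=T_{\sigma}$ with $\sigma$ the discrete exit time of $S^{\phi}$ from the ball and $T_k$ the $k$-th jump time, one gets $\bbP^x(\tau^{Y}_{B(x,r)}\le t)\le 1/2$ whenever $t\le c\Phi(r)$; Wald's identity gives $\bbE^x[\tau^{Y}_{B(x,r)}]=\bbE^x[\sigma_{B(x,r)}]$, which by Theorem \ref{tm:maximal_inequality} is $\ge c'\Phi(r)$, while the reverse bound $\bbE^x[\tau^{Y}_{B(x,r)}]\le C\Phi(r)$ follows by dominating $\tau^{Y}_{B(x,r)}$ by the first $Y$-jump of size $\ge2r$, whose rate is $\asymp\phi(r^{-2})=1/\Phi(r)$ uniformly (by Lemma \ref{lm:Lemma21_from_CKW} and the lower bound for $J$). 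Together with the volume growth, the scaling of $\Phi$ and the jump bound these are the ingredients $\mathrm{J}_{\Phi,\le}$, $\mathrm{E}_{\Phi}$ and the cutoff Sobolev inequality $\mathrm{CSJ}(\Phi)$ (which, the upper scaling index of $\Phi$ being below $2$, follows from $\mathrm{J}_{\Phi,\le}$ and the exit-time bounds as in \cite{CKW16}) that, combined with $\mathrm{FK}(\Phi)$, give $\mathrm{UHK}(\Phi)$. Concretely, the off-diagonal half is obtained, following \cite{CKW16}, from a Meyer-type decomposition of $J$ at the scale $R=|x-y|$ together with an iteration of the near-diagonal estimate, which produces $q(t,x,y)\le c\,t/\big(V(R)\Phi(R)\big)=c\,t\,R^{-d}\phi(R^{-2})$, and \eqref{eq:UHK_for_Y} follows.

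\textbf{Main difficulty.} The routine parts are verifying the volume growth, the scaling of $\Phi$ and the jump bound, which are immediate from \eqref{eq:volume_growth}, \eqref{eq:scaling} and \eqref{eq:one_step_prob_estimates}, and the Poisson-clock bookkeeping in the on-diagonal and exit-time transfers. The genuine obstacle is the off-diagonal bound: one must adapt the Meyer-decomposition/iteration scheme of \cite{CKW16} to this discrete-space setting and, crucially, run it with $\phi$ only $O$-regularly varying at $0$ --- so $\Phi$ is not a regularly varying scale function and every step has to go through the two-sided power bounds \eqref{eq:scaling}, \eqref{eq:scaling_for_inverse} rather than through asymptotics --- and to check the cutoff Sobolev inequality for the kernel $J\asymp j$ on $\bbZ^d$ instead of quoting it from a reference.
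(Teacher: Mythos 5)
Your set‑up, on‑diagonal transfer and exit‑time transfer all match the paper's own proof (Lemma \ref{lm:UB_pom7} and Lemma \ref{lm:exit_time_estimates_for_Y} are exactly what you describe), and you correctly identify the CKW16 machinery as the engine. The substantive gap is in the off‑diagonal step, and it is larger than your ``Main difficulty'' paragraph suggests.

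First, a Meyer decomposition ``at the scale $R=|x-y|$'' does not by itself give the bound. The error term in \eqref{eq:estimate_from_Meyer} (Proposition \ref{prop:UB_pom6}) is fine: $C_{10}t\rho^{-d}\phi(\rho^{-2})$. But the truncated bound \eqref{eq:UB_pom8} carries the prefactor $(\phi^{-1}(t^{-1}))^{d/2}$, and with $\rho\asymp|x-y|$ the exponential is merely $O(1)$; the ratio
$(\phi^{-1}(t^{-1}))^{d/2}/\bigl(t|x-y|^{-d}\phi(|x-y|^{-2})\bigr)$
grows like $(|x-y|/r_t)^{\,d+2\alpha^*}$ when $|x-y|\gg r_t$, so the on‑diagonal prefactor cannot be absorbed. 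What is actually needed is the bootstrap that the paper carries out: first the polynomial tail bound $\sum_{y\notin B(x,r)}q(t,x,y)\le c\,(r/r_t)^{-\theta}$ for a suitably chosen $\theta=2\alpha_*-(2\alpha_*+d)/N$ (this is the unnumbered lemma after Lemma \ref{lm:UB_pom8}, and it already requires a carefully tuned sequence of truncation levels $\rho_n=2^{n\alpha}r^{1-1/N}r_t^{1/N}$, not a single $\rho$); then Lemma \ref{lm:pom_result_for_UHK_for_Y}; then the $(1+\rho/r_t)^{-(k-1)\theta}$ improvement via an adaptation of \cite[Lemma 7.11]{CKW16}; and only then the final combination with $k=1+(d+2\alpha^*)/\theta$ and $\rho=|x-y|/(8k)$. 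Calling this ``an iteration of the near‑diagonal estimate'' conceals precisely the part of the argument that is not routine.

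Second, you cannot quote the stability theorem of \cite{CKW16} as a black box: their hypotheses include a global two‑sided scaling of $\Phi$, whereas \eqref{eq:scaling} is only a scaling near zero (i.e.\ for $r\ge1$ in the $\Phi$‑variable). The paper is explicit that it must reprove the needed CKW16 lemmas (7.2, 7.8, 7.11) with Lemma \ref{lm:Lemma21_from_CKW} as the replacement for \cite[Lemma 2.1]{CKW16}, rather than invoke the stability theorem; in particular, asserting ``$\mathrm{CSJ}(\Phi)$ follows'' is not a step one can skip in this setting. So the plan is sound and coincides in spirit with the paper's, but the off‑diagonal bound is genuinely not yet proved in your proposal: the single‑scale Meyer argument fails, and the needed $\theta$‑decay bootstrap (with its choice of $N$, $\alpha$, the sequence $\rho_n$, and the $k$‑fold improvement) has to be supplied.
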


We first handle the on-diagonal part.
\begin{LM}\label{lm:UB_pom7}
	There exists a constant $C_{2} > 0$ such that for $t > 0$ and $x, y \in \bbZ^d$
	\begin{equation}\label{eq:UB_pom7}
		q(t, x, y) \le C_{2} \OBL{\phi^{-1}(t^{-1})}^{d/2}.
	\end{equation}
\end{LM}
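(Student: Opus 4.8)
The plan is to pass to the series representation of the continuous-time kernel and then exploit the on-diagonal bound for $p^{\phi}(n,x,y)$ together with the concentration of the Poisson distribution. Since $Y_t = S^{\phi}_{N_t}$ with $N_t$ a rate-one Poisson process independent of $S^{\phi}$,
\[
q(t,x,y) = e^{-t}\sum_{n = 0}^{\infty}\frac{t^n}{n!}\,p^{\phi}(n,x,y).
\]
The case $0 < t \le 1$ is trivial: $q(t,x,y)\le 1$ while $\phi^{-1}(t^{-1})\ge \phi^{-1}(1)=1$ since $\phi^{-1}$ is increasing, so the bound holds for any $C_{2}\ge 1$. Assume henceforth $t \ge 1$. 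The $n=0$ term is at most $e^{-t}$, and from \eqref{eq:scaling_for_inverse} applied with $R=1$, $r=t^{-1}$ one gets $(\phi^{-1}(t^{-1}))^{d/2}\ge c_{*}^{d/(2\alpha_{*})}t^{-d/(2\alpha_{*})}$, whence $e^{-t}\le c_{1}(\phi^{-1}(t^{-1}))^{d/2}$ because $t^{d/(2\alpha_{*})}e^{-t}$ is bounded on $[1,\infty)$. For $n \ge 1$ I would invoke Corollary \ref{rem:off-diag_bd_from_on-diag}, which gives $p^{\phi}(n,x,y)\le c(\phi^{-1}(n^{-1}))^{d/2}$, so it remains to show that $e^{-t}\sum_{n\ge 1}\frac{t^n}{n!}(\phi^{-1}(n^{-1}))^{d/2}\le C(\phi^{-1}(t^{-1}))^{d/2}$ for $t\ge 1$.

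Next I would compare $\phi^{-1}(n^{-1})$ with $\phi^{-1}(t^{-1})$. For $n\ge t$, monotonicity of $\phi^{-1}$ gives $\phi^{-1}(n^{-1})\le\phi^{-1}(t^{-1})$, hence
\[
e^{-t}\sum_{n\ge t}\frac{t^n}{n!}(\phi^{-1}(n^{-1}))^{d/2}\le(\phi^{-1}(t^{-1}))^{d/2}\,e^{-t}\sum_{n\ge 0}\frac{t^n}{n!}=(\phi^{-1}(t^{-1}))^{d/2}.
\]
For $1\le n<t$, applying \eqref{eq:scaling_for_inverse} with $R=n^{-1}\le 1$ and $r=t^{-1}$ yields $\phi^{-1}(n^{-1})\le(1/c_{*})^{1/\alpha_{*}}(t/n)^{1/\alpha_{*}}\phi^{-1}(t^{-1})$, so with $\beta:=d/(2\alpha_{*})$,
\[
e^{-t}\sum_{1\le n<t}\frac{t^n}{n!}(\phi^{-1}(n^{-1}))^{d/2}\le(1/c_{*})^{\beta}(\phi^{-1}(t^{-1}))^{d/2}\,e^{-t}\sum_{1\le n<t}\frac{t^n}{n!}\Big(\frac{t}{n}\Big)^{\beta}.
\]
Thus the lemma reduces to the purely numerical claim $e^{-t}\sum_{1\le n<t}\frac{t^n}{n!}(t/n)^{\beta}\le C$ uniformly in $t\ge 1$.

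This last estimate is the only real obstacle, since it requires controlling the left tail of the Poisson($t$) law, where the weight $(t/n)^{\beta}$ grows but the Poisson mass is exponentially small. I would prove it by splitting the sum: on $\{t/2\le n<t\}$ we have $(t/n)^{\beta}\le 2^{\beta}$ and $e^{-t}\sum_{n}\frac{t^n}{n!}=1$, so that part is at most $2^{\beta}$; on $\{1\le n<t/2\}$ we use $(t/n)^{\beta}\le t^{\beta}$ together with the Chernoff bound for the Poisson distribution, $e^{-t}\sum_{n<t/2}\frac{t^n}{n!}=\bbP(N_t<t/2)\le e^{-c t}$ for a suitable $c>0$, so that part is at most $t^{\beta}e^{-c t}$, which is bounded on $[1,\infty)$. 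Adding the two ranges gives the constant $C$, and combining this with the estimates of the previous paragraphs establishes \eqref{eq:UB_pom7}.
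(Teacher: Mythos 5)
Your proof is correct, and the overall strategy mirrors the paper's: write $q(t,x,y)=e^{-t}\sum_{n\ge 0}\frac{t^n}{n!}p^\phi(n,x,y)$, split the sum at $n\approx t$, use monotonicity of $\phi^{-1}$ for $n\ge t$, and use the scaling condition \eqref{eq:scaling_for_inverse} for $n<t$. There are two harmless departures from the paper. First, the paper bounds only $q(t,x,x)$ and then passes to general $(x,y)$ by Cauchy--Schwarz on the semigroup; you instead invoke Corollary \ref{rem:off-diag_bd_from_on-diag} directly, which is the same Cauchy--Schwarz argument pushed one level back, so this is cosmetic. Second, and more substantively, the paper disposes of the sum over $1\le n<t$ by citing \cite[Cor.~3]{Zn09} to get $\sum_{1\le k\le t}\frac{t^k}{k!}k^{-\beta}\le c\,t^{-\beta}e^t$, whereas you replace that external reference with an elementary two-range split plus the Chernoff bound $\bbP(N_t<t/2)\le e^{-ct}$. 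Your version is self-contained and perfectly rigorous; the constant $c=(1-\log 2)/2>0$ makes the Chernoff step explicit, and $t^\beta e^{-ct}$ is indeed bounded on $[1,\infty)$. The only thing I would add for completeness is that in the paper's Cauchy--Schwarz step one needs $\phi^{-1}((t/2)^{-1})\le c\,\phi^{-1}(t^{-1})$ (easy from \eqref{eq:scaling_for_inverse}), a detail your route sidesteps entirely.
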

\begin{proof}
By independence and Theorem \ref{tm:on-diag_bounds} we get
	\begin{align*}
		q(t, x, x)
		& 
		 \le e^{-t} + c_1 e^{-t} \sum_{k = 1}^{\infty} \frac{t^k}{k!} \OBL{\phi^{-1}(k^{-1})}^{d/2} \nonumber \\
		& = e^{-t} + c_1 e^{-t} \OBL{\phi^{-1}(t^{-1})}^{d/2} 
		\Big(\sum_{k > t} + \sum_{1 \le k \le t} \Big) \frac{t^k}{k!} \frac{\OBL{\phi^{-1}(k^{-1})}^{d/2}}{\OBL{\phi^{-1}(t^{-1})}^{d/2}} \nonumber \\
		& = e^{-t} + c_1 e^{-t} \OBL{\phi^{-1}(t^{-1})}^{d/2} (\Sigma_1 + \Sigma_2).
 	\end{align*}
By monotonicity, $\Sigma_1\le e^t$.
	We next find a bound for $\Sigma_2$ and after that, we will show that $e^{-t} \le c_4 \OBL{\phi^{-1}(t^{-1})}^{d/2}$ for all $t > 0$ and for some constant $c_4 > 0$. Observe that $\Sigma_2=0$ for $t<1$.
By \eqref{eq:scaling_for_inverse} we get
	\begin{equation*}
		\Sigma_2  \le c_2 t^{d/2\alpha_*} \sum_{1 \le k \le t} \frac{t^k}{k!} \frac{1}{k^{d/2\alpha_*}} \le c_3 e^t,
	\end{equation*}
where in the last inequality we applied \cite[Cor. 3]{Zn09}.
It suffices to show that 
\begin{align*}
e^{-t} \le c_4 \OBL{\phi^{-1}(t^{-1})}^{d/2},\quad t>0.
\end{align*}
For $t \ge 1$ this follows easily from \eqref{eq:scaling_for_inverse} whereas for $t \in (0, 1)$ we observe that  $e^{-t} < 1$ and $\phi^{-1}(t^{-1}) > 1$.
Finally, by the Cauchy-Schwarz inequality we obtain
	\begin{align*}
		q(t, x, y)
		& = \sum_{z \in \bbZ^d} q(t/2, x, z) q(t/2, y, z) \\
		 &\le \Big(\sum_{z \in {\bbZ^d}} q(t/2, x, z)^2\Big)^{1/2} \Big( \sum_{z \in {\bbZ^d}}  q(t/2, y, z)^2\Big)^{1/2}
		 \le 
		 C_2 \OBL{\phi^{-1}(t^{-1})}^{d/2}
	\end{align*}
and the proof of \eqref{eq:UB_pom7} is finished.
\end{proof}

Before we prove the off-diagonal estimate in \eqref{eq:UHK_for_Y} we establish a series of auxiliary results. We follow here the elaborate approach of \cite{CKW16}. 
We use the notation 
\begin{align*}
\tau^Y(x, r) = \inf\{t \ge 0 : Y_t \notin B(x, r)\}.
\end{align*}

\begin{LM}\label{lm:exit_time_estimates_for_Y}
For all $r \ge 1$ it holds
	\begin{equation*}
		\bbE^x[\tau^Y(x, r)] \asymp \frac{1}{\phi(r^{-2})}.
	\end{equation*}
\end{LM}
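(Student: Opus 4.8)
The plan is to reduce the continuous-time exit time to the discrete one via the Poisson clock and then to estimate the discrete exit time $\sigma := \inf\{k \ge 0 : S^{\phi}_k \notin B(x, r)\}$. Since $Y$ leaves $B(x, r)$ exactly at the $\sigma$-th jump of $S^{\phi}$, one has $\tau^Y(x, r) = T_\sigma = \sum_{i = 1}^{\sigma} U_i$; as $\sigma$ is measurable with respect to $\sigma\{S^{\phi}\}$, which is independent of $(U_i)_{i \ge 1}$, conditioning on $\sigma\{S^{\phi}\}$ and using $\bbE[U_1] = 1$ yields $\bbE^x[\tau^Y(x, r)] = \bbE^x[\sigma]$ (Wald's identity; the finiteness required for it is supplied by the upper bound proved next). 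Hence it suffices to show $\bbE^x[\sigma] \asymp 1/\phi(r^{-2})$ for $r \ge 1$.

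For the upper bound the key point is a uniform one-step escape estimate: there is a constant $c_1 > 0$ such that $\bbP^z(S^{\phi}_1 \notin B(x, r)) \ge \bbP(\aps{S^{\phi}_1} \ge 2r) \ge c_1 \phi(r^{-2})$ for every $x \in \bbZ^d$, every $r \ge 1$, and every $z \in B(x, r)$. The first inequality uses $B(x, r) \subseteq B(z, 2r)$; the second follows by restricting the sum $\bbP(\aps{S^{\phi}_1} \ge 2r) = \sum_{\aps{y} \ge 2r} p^{\phi}(0, y)$ to the shell $2r \le \aps{y} < 4r$ and then invoking \eqref{eq:one_step_prob_estimates}, the monotonicity of $j$, the volume growth \eqref{eq:volume_growth}, and the upper scaling in \eqref{eq:scaling} to pass from $\phi((4r)^{-2})$ to $\phi(r^{-2})$. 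By the Markov property, a walk still inside $B(x, r)$ after $n$ steps leaves it at step $n + 1$ with probability at least $c_1 \phi(r^{-2})$, so $\bbP^x(\sigma > n) \le (1 - c_1 \phi(r^{-2}))^n$ and therefore $\bbE^x[\sigma] = \sum_{n \ge 0} \bbP^x(\sigma > n) \le 1/(c_1 \phi(r^{-2}))$.

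For the lower bound we use the maximal inequality of Theorem \ref{tm:maximal_inequality}. Suppose first that $r$ is large enough that $n_0 := \floor{\gamma / \phi(r^{-2})} \ge 1$. Remaining inside $B(x, r/2)$ during the first $n_0$ steps forces $\sigma > n_0$, so \eqref{eq:maximal_inequality} gives $\bbP^x(\sigma > n_0) \ge \bbP^x\big(\max_{k \le n_0} \aps{S^{\phi}_k - x} < r/2\big) \ge 3/4$, whence $\bbE^x[\sigma] \ge \tfrac{3}{4} n_0 \ge \tfrac{3\gamma}{8 \phi(r^{-2})}$, using $\floor{a} \ge a/2$ for $a \ge 1$. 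For the bounded range of $r$ left over, where $\phi(r^{-2})$ is bounded above and below by constants, the trivial bound $\bbE^x[\sigma] \ge 1$, valid because $S^{\phi}_0 = x \in B(x, r)$, gives the claimed lower bound after adjusting the constant. Combining the two estimates with the reduction above proves the lemma.

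The only step that requires genuine care is the uniform one-step escape estimate, specifically extracting $\bbP(\aps{S^{\phi}_1} \ge 2r) \ge c_1 \phi(r^{-2})$ from the one-step bound \eqref{eq:one_step_prob_estimates}; everything else is routine bookkeeping. It is also worth keeping in mind that Theorem \ref{tm:maximal_inequality} only becomes informative once $\gamma / \phi(r^{-2}) \ge 1$, which is precisely why the small-$r$ part of the lower bound must be handled separately.
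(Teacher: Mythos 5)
Your proof is correct, and the opening move — reducing $\tau^Y(x,r)$ to the discrete exit time $\sigma$ via Wald's identity using the independence of the Poisson clock from $S^\phi$ — is exactly the paper's. The difference is what happens next: the paper simply quotes the two-sided estimate $\bbE^x[\tau^{S^\phi}(x,n)] \asymp 1/\phi(n^{-2})$ from \cite[Prop.~5.4 and Lem.~5.5]{MS18} and then extends from integers $n$ to reals $r\ge 1$ by monotonicity of $\phi$ and \eqref{eq:phi(v)/phi(u)_le_v/u}, whereas you re-derive the discrete estimate from scratch. Your upper bound (uniform one-step escape probability $\bbP^z(S^\phi_1\notin B(x,r)) \ge c_1\phi(r^{-2})$ for $z\in B(x,r)$, hence geometric decay of $\bbP^x(\sigma>n)$) and your lower bound (the maximal inequality of Theorem~\ref{tm:maximal_inequality} forces $\bbP^x(\sigma>\floor{\gamma/\phi(r^{-2})})\ge 3/4$) together give a self-contained proof that replaces the citation; this is the standard machinery and is presumably close to what \cite{MS18} itself does, but it keeps the argument within the present paper's toolkit. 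The only small bookkeeping point worth tightening in your one-step escape estimate is the shell count: the constants $c',c''$ in \eqref{eq:volume_growth} do not by themselves guarantee that $\{y: 2r\le|y|<4r\}$ contains $\gtrsim r^d$ points, so one should either choose the outer radius $Mr$ with $M$ large enough that $c'M^d - c''2^d>0$, or invoke the direct lattice-geometry fact that annuli of this kind have $\asymp r^d$ points for $r\ge 1$; either fix is immediate and does not affect the argument.
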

\begin{proof}
Let
\begin{align*}
\tau^{S^{\phi}}(x, r) =  \inf\{k \ge 0 : S^{\phi}_k \notin B(x, r)\}.
\end{align*}
By \cite[Prop. 5.4 and Lem. 5.5]{MS18},
	\begin{equation*}
		 \bbE^x[\tau^{S^{\phi}}(x, n)] \asymp \frac{1}{\phi(n^{-2})}, \quad n \in \bbN.
	\end{equation*}
Then, by Wald's identity,
	\begin{equation*}
		\bbE^x[\tau^Y(x, n)] = \bbE^x \left( U_1+\ldots +U_{\tau^{S^{\phi}}(x, n)} \right)
		 = \bbE^x[\tau^{S^{\phi}}(x, n)].
	\end{equation*}
	Hence, for every $n \in \bbN$ we have
	\begin{equation*}
		\frac{c_1}{\phi(n^{-2})} \le \bbE^x[\tau^Y(x, n)] \le \frac{c_2}{\phi(n^{-2})}.
	\end{equation*}
Finally, by monotonicity of $\phi$ and by \eqref{eq:phi(v)/phi(u)_le_v/u} we easily conclude the desired estimate.
\end{proof}

\begin{LM}\label{lm:UB_pom1}
	There exist constants $C_3, C_4 > 0$ such that 
	\begin{equation}\label{toShow}
		\bbP^x(\tau^Y(x, r) \le t) \le 1 - \frac{C_3 \phi((2r)^{-2})}{\phi(r^{-2})} + C_4 t \phi((2r)^{-2}),
	\end{equation}
for all $x \in \bbZ^d$ and for all $r, t > 0$	
\end{LM}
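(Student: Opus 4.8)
The goal is to bound $\bbP^x(\tau^Y(x,r) \le t)$ from above, and the natural tool is a Lévy-type (Meyer-type) estimate combined with the exit-time expectation from Lemma \ref{lm:exit_time_estimates_for_Y}. First I would dispose of the degenerate cases: if $r < 1$ then $B(x,r) = \{x\}$, so $\tau^Y(x,r)$ is the first jump time of the underlying Poisson clock composed with $S^\phi$, which is exponential, and the bound can be checked directly (in fact for $r<1$ the claimed inequality is easy because $\phi((2r)^{-2})/\phi(r^{-2})$ is bounded and $\phi((2r)^{-2})$ may be replaced by $\phi(1)=1$ up to constants via \eqref{eq:scaling} — actually one should be slightly careful since $2r$ may still be $<1$; in that case $B(x,2r)=\{x\}$ too and one argues with the holding time directly). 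So the substance is $r \ge 1$.

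For $r \ge 1$, the plan is the following. Decompose the event $\{\tau^Y(x,r) \le t\}$ according to whether the process exits $B(x,r)$ by a ``big jump'' (landing outside $B(x,2r)$) or by a sequence of ``small jumps'' (landing inside $B(x,2r)$).
Write
\begin{align*}
\bbP^x(\tau^Y(x,r) \le t)
&= \bbP^x\big(\tau^Y(x,r) \le t,\ Y_{\tau^Y(x,r)} \notin B(x,2r)\big) \\
&\quad + \bbP^x\big(\tau^Y(x,r) \le t,\ Y_{\tau^Y(x,r)} \in B(x,2r)\big).
\end{align*}
The first term is controlled by the rate at which $Y$ makes a jump of size exceeding $r$: by the Lévy-system formula for $Y$ (whose jump kernel is comparable to $j(\aps{x-y})$ by \eqref{eq:one_step_prob_estimates}), this probability is at most $\bbE^x[\tau^Y(x,r) \wedge t]$ times $\sup_{w \in B(x,r)} \sum_{z \in B(w,r)^c} j(\aps{w-z})$, and Lemma \ref{lm:Lemma21_from_CKW} bounds the inner sum by $c\,\phi(r^{-2})$ (with $B(x,r)\subseteq B(w,2r)$ absorbed into a constant via \eqref{eq:scaling}). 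Hence the first term is $\le C_4\, t\, \phi(r^{-2}) \asymp C_4 t\phi((2r)^{-2})$ by \eqref{eq:scaling}, which is exactly the last term in \eqref{toShow}.

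For the second term, the key observation is that on $\{Y_{\tau^Y(x,r)} \in B(x,2r)\}$ the exit point lies in $B(x,2r)$, so this probability is at most $\bbP^x(\tau^Y(x,2r) \le t)$ minus a contribution from exits of $B(x,2r)$ by a big jump; more directly, I would bound the second term by $\bbP^x(\tau^Y(x,2r) \le t)$ and then use Markov's inequality together with Lemma \ref{lm:exit_time_estimates_for_Y}:
\begin{equation*}
\bbP^x(\tau^Y(x,2r) \le t) = 1 - \bbP^x(\tau^Y(x,2r) > t) \le 1 - \frac{\bbE^x[\tau^Y(x,2r)] - t}{\text{(something)}},
\end{equation*}
which is not quite the right shape. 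The cleaner route: use the standard bound $\bbP^x(\tau^Y(x,2r) > t) \ge 1 - t/\bbE^x[\tau^Y(x,2r)]$ only when $t < \bbE^x[\tau^Y(x,2r)]$; instead I would use that $\bbP^x(\tau^Y(x,2r) \le t) \le 1 - c$ for $t$ up to a fixed fraction of $\bbE^x[\tau^Y(x,2r)] \asymp \phi((2r)^{-2})^{-1}$, via a Paley–Zygmund / second-moment type argument or via the submartingale property, giving $\bbP^x(\tau^Y(x,r) \le t,\ Y_{\tau^Y(x,r)} \in B(x,2r)) \le 1 - C_3\phi((2r)^{-2})/\phi(r^{-2})$ after inserting the ratio $\phi((2r)^{-2})/\phi(r^{-2}) \asymp 1$ (this ratio is there to make the inequality scale-robust and to make the $r<1$ case work; by \eqref{eq:scaling} it is bounded above and below). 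Combining the two bounds yields \eqref{toShow}.

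The main obstacle I anticipate is making the ``small-jump'' term genuinely give $1 - C_3 \phi((2r)^{-2})/\phi(r^{-2})$ rather than just $1 - o(1)$: one needs a quantitative lower bound on $\bbP^x(\tau^Y(x,2r) > t)$ that is linear in $t\phi((2r)^{-2})$ near $t=0$ but saturates at a strictly positive constant, and the natural way to get the strictly-positive-constant part is to notice that for $t$ smaller than a fixed multiple of $\bbE^x[\tau^Y(x,2r)]$, the exit probability is bounded away from $1$ by a universal constant (this is where Lemma \ref{lm:exit_time_estimates_for_Y} enters, controlling $\bbE^x[\tau^Y(x,2r)]$ both above and below, together with, say, Chebyshev applied to $\tau^Y(x,2r)$ or a direct first-moment argument $\bbP^x(\tau^Y \le t) \le \bbE^x[\tau^Y \wedge (2t)]/t \cdot$const is the wrong direction — rather $\bbP^x(\tau^Y > t) \ge (\bbE^x\tau^Y - t)_+/\|\tau^Y\|_\infty$ fails since $\tau^Y$ is unbounded, so one really does need the large-jump decomposition plus Lemma \ref{lm:Lemma21_from_CKW} to control the tail of $\tau^Y$, closing the circle). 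In short: the large-jump term is handled by the Lévy system and Lemma \ref{lm:Lemma21_from_CKW}, the small-jump term by Lemma \ref{lm:exit_time_estimates_for_Y} and a Markov-inequality argument, and the slightly delicate bookkeeping is ensuring the constants $C_3, C_4$ can be chosen uniformly in $r$ and $t$, which follows because every appearance of $\phi$ at scales $r$ and $2r$ is comparable by \eqref{eq:scaling}.
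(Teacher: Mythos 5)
Your proposal has a genuine gap for the main case $r \ge 1$, and you half-acknowledge it yourself. The big/small-jump decomposition is not what the paper does, and the part where it matters is exactly the part you never close. For the ``small-jump'' term you want to show $\bbP^x\big(\tau^Y(x,r) \le t,\ Y_{\tau^Y(x,r)} \in B(x,2r)\big) \le 1 - C_3 \phi((2r)^{-2})/\phi(r^{-2})$, but you bound it by $\bbP^x(\tau^Y(x,2r) \le t)$ and then try to argue this is bounded away from $1$ for small $t$ --- which is a statement of the same type as the lemma itself at radius $2r$, hence circular. Your proposed fix via Paley--Zygmund requires a second-moment bound $\bbE^x[\tau^2] \lesssim (\bbE^x\tau)^2$ that is not in the paper and would be an additional result to establish; your alternative $\bbP^x(\tau > t) \ge (\bbE^x\tau - t)_+/\|\tau\|_\infty$ you correctly reject since $\tau$ is unbounded. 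So as written the argument does not go through.

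The maneuver you are missing is a first-moment-only trick. Start from the pointwise inequality $\tau^Y(x,r) \le t + (\tau^Y(x,r) - t)\,\bbjedan_{\{\tau^Y(x,r) > t\}}$, take $\bbE^x$, and apply the Markov property at the deterministic time $t$ (not at the exit time): on $\{\tau^Y(x,r) > t\}$ one has $Y_t \in B(x,r)$ and hence $B(x,r) \subseteq B(Y_t, 2r)$, so
\begin{equation*}
\bbE^x[\tau^Y(x,r)] \le t + \sup_{z \in B(x,r)} \bbE^z[\tau^Y(z, 2r)] \cdot \bbP^x(\tau^Y(x,r) > t).
\end{equation*}
Combining the lower bound $\bbE^x[\tau^Y(x,r)] \ge c_1/\phi(r^{-2})$ and the upper bound $\bbE^z[\tau^Y(z,2r)] \le c_2/\phi((2r)^{-2})$ from Lemma \ref{lm:exit_time_estimates_for_Y} and rearranging gives exactly
\begin{equation*}
\bbP^x(\tau^Y(x,r) \le t) \le 1 - \frac{c_1\,\phi((2r)^{-2})}{c_2\,\phi(r^{-2})} + \frac{t\,\phi((2r)^{-2})}{c_2},
\end{equation*}
with no need for the L\'evy-system decomposition, Lemma \ref{lm:Lemma21_from_CKW}, or any second-moment control. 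This also explains why the ratio $\phi((2r)^{-2})/\phi(r^{-2})$ appears in the statement: it is a byproduct of comparing exit times at scales $r$ and $2r$, not a cosmetic factor inserted for scale-robustness as you suggest. Your handling of $r<1$ (holding-time argument, then absorbing constants via \eqref{eq:scaling}) is essentially the same as the paper's, and the L\'evy-system bound on the big-jump term is correct but turns out to be unnecessary here --- that circle of ideas is used elsewhere in Section \ref{sec:off-diagonal_bounds}, but not in this lemma.
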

\begin{proof}
We first consider the case $r\in (0, 1)$. Then $Y$ exits from the ball $B(x, r)$ as soon as it jumps to some point other than $x$. Observe that 
	\begin{equation*}
		\{\tau^Y(x, r) \le t\} = \bigcup_{n = 1}^{\infty} \{T_n \le t, S^{\phi}_1 = S^{\phi}_2 = \cdots = S^{\phi}_{n - 1} = x, S^{\phi}_n \neq x\}.
	\end{equation*}
Hence
	\begin{align*}
		\bbP^x(\tau^Y(x, r) \le t)
		 = \sum_{n = 1}^{\infty} \bbP(T_n \le t) \OBL{\bbP(S^{\phi}_1 = 0)}^{n - 1} \bbP(S^{\phi}_1 \neq 0) 
		 \le t,
	\end{align*}
	where we used Lemma \ref{lm:bound_on_gamma_dist}. Choosing $C_3' =1/2 $ we have
	\begin{equation*}
1 - \frac{C_3' \phi((2r)^{-2})}{\phi(r^{-2})} \ge \frac{1}{2}.
	\end{equation*}
If we set $C_4' = 1/\phi(1/4)$ we have $t \le C_4' t \phi((2r)^{-2})$,
	Hence, for $r < 1$ we have
	\begin{equation*}
		\bbP^x(\tau^Y(x, r) \le t) \le 1 - \frac{C_3' \phi((2r)^{-2})}{\phi(r^{-2})} + C_4' t \phi((2r)^{-2}),
	\end{equation*}
	and this is precisely \eqref{toShow} with $C_3'$ and $C_4'$.

Next, assume that $r \ge 1$. Since for any $t > 0$
	\begin{equation*}
		\tau^Y(x, r) \le t + (\tau^Y(x, r) - t)\bbjedan_{\{\tau^Y(x, r) > t\}},
	\end{equation*}
by Markov property and Lemma \ref{lm:exit_time_estimates_for_Y} we get
	\begin{align*}
		\bbE^x[\tau^Y(x, r)]
		& \le  t + \bbE^x \UGL{\bbjedan_{\{\tau^Y(x, r) > t\}} \bbE^{Y_t} [\tau^Y(x, r) - t]} \\
		& \le  t + \sup_{z \in B(x, r)} \bbE^z [\tau^Y(x, r)] \bbP^x(\tau^Y(x, r) > t) \\
		& \le t + \sup_{z \in B(x, r)} \bbE^z [\tau^Y(z, 2r)] \bbP^x(\tau^Y(x, r) > t) \\
		& \le t + \frac{c_2}{\phi((2r)^{-2})} \bbP^x(\tau^Y(x, r) > t).
	\end{align*}
Using again Lemma \ref{lm:exit_time_estimates_for_Y} we have
	\begin{align*}
 \frac{c_1}{\phi(r^{-2})} \le \bbE^x[\tau^Y(x, r)] \le t + \frac{c_2}{\phi((2r)^{-2})} \bbP^x(\tau^Y(x, r) > t) 
	\end{align*}
and whence
\begin{align*}
1 - \bbP^x(\tau^Y(x, r) \le t) \ge \frac{c_1 \phi((2r)^{-2})}{c_2 \phi(r^{-2})} - \frac{t \phi((2r)^{-2})}{c_2}.
\end{align*}
If we set $C_3 = \min\{C_3', c_1/c_2\} \le 1/2$ and $C_4= \max\{C_4', 1/c_2\}$ we obtain \eqref{toShow} and the proof is finished.
\end{proof}
We now study the truncated process which is built upon the process $Y$. 
For any $\rho > 0$ we denote by $Y^{(\rho)}$ the process obtained by removing from $Y$ the jumps of size larger than $\rho$. More precisely, the process $Y^{(\rho)}$ is associated with the following Dirichlet form
	\begin{equation*}
		\calE^{(\rho)} (u, v) = \sum_{\aps{x - y} \le \rho} (u(x) - u(y)) (v(x) - v(y))p^{\phi}(x, y),
	\end{equation*}
which is defined for functions $u,v$ from the domain of the Dirichlet form of the random walk $S^\phi$, cf. \cite[Sec. 5]{Barlow_book}. We write $q^{(\rho )} (t,x,y) $ for
the transition probability of $Y^{(\rho)}$ and $Q_t^{(\rho)}$ for its  semigroup.
We will also work with killed processes. For any non-empty $D \subseteq \bbZ^d$ we denote by $(Q_t^D)_{t\ge 0}$  the semigroup of the killed process $Y^D$. Similarly we write $(Q_t^{(\rho), D})_{t\ge 0}$ for the semigroups of $Y^{(\rho), D}$.
 Let
\begin{align*}
\tau^{(\rho)}(x, r) = \inf\{t \ge 0 : Y_t^{(\rho)} \notin B(x, r)\}.
\end{align*}

\begin{LM}\label{lm:UB_pom2}
	There exist constants $C_5 \in (0, 1)$ and $C_6 > 0$ such that for any $r, t, \rho > 0$
	\begin{equation*}
		\bbP^x(\tau^{(\rho)}(x, r) \le t) \le 1 - C_5 + C_6 t \OBL{\phi((2r)^{-2}) \vee \phi(\rho^{-2})}.
	\end{equation*}
\end{LM}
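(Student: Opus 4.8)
The plan is to compare the truncated process $Y^{(\rho)}$ with the full process $Y$ by means of Meyer's construction, and then to reduce everything to Lemma \ref{lm:UB_pom1}. Write the generator of $Y$ as $L = L^{(\rho)} + J$, where $L^{(\rho)}$ generates $Y^{(\rho)}$ and $Jf(x) = \sum_{\aps{y-x} > \rho} p^{\phi}(x,y)\OBL{f(y)-f(x)}$. Since $p^{\phi}$ is translation invariant, the total rate of the removed (large) jumps, $p := \sum_{\aps{y-x}>\rho} p^{\phi}(x,y) = \bbP(\aps{S^{\phi}_1} > \rho)$, does not depend on $x$. Consequently $Y$ can be realised from $Y^{(\rho)}$ by inserting, at the arrival times of an independent Poisson process of rate $p$, a jump from the current position $x$ to $y$ with probability $p^{-1} p^{\phi}(x,y)\bbjedan_{\{\aps{y-x}>\rho\}}$; see \cite{CKW16}. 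Let $\Lambda$ be the first arrival time of this clock. Then $\Lambda$ is exponentially distributed with parameter $p$, and under this coupling $Y_s = Y^{(\rho)}_s$ for every $s < \Lambda$.

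First I would record two elementary estimates. On the one hand, $\bbP^x(\Lambda \le t) = 1 - e^{-pt} \le pt$, and by Lemma \ref{lm:h_le_phi} we have $p = \bbP(\aps{S^{\phi}_1} > \rho) \le h(\rho) \le C\phi(\rho^{-2})$ for an absolute constant $C$, so $\bbP^x(\Lambda \le t) \le Ct\,\phi(\rho^{-2})$. On the other hand, on the event $\{\Lambda > t\}$ the paths of $Y^{(\rho)}$ and $Y$ coincide on $[0,t]$, hence $\{\tau^{(\rho)}(x,r) \le t\}\cap\{\Lambda > t\} \subseteq \{\tau^{Y}(x,r) \le t\}$, and therefore
\[
	\bbP^x(\tau^{(\rho)}(x,r) \le t) \le \bbP^x(\tau^{Y}(x,r) \le t) + \bbP^x(\Lambda \le t).
\]

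To conclude I would apply Lemma \ref{lm:UB_pom1} to the first summand and use \eqref{eq:phi(v)/phi(u)_le_v/u} with $u = (2r)^{-2} \le v = r^{-2}$, which gives $\phi(r^{-2})/\phi((2r)^{-2}) \le 4$, i.e.\ $C_3\phi((2r)^{-2})/\phi(r^{-2}) \ge C_3/4$ for every $r>0$. Combining the bounds,
\[
	\bbP^x(\tau^{(\rho)}(x,r) \le t) \le 1 - \tfrac{C_3}{4} + C_4 t\,\phi((2r)^{-2}) + Ct\,\phi(\rho^{-2}) \le 1 - \tfrac{C_3}{4} + (C_4 + C)\,t\OBL{\phi((2r)^{-2}) \vee \phi(\rho^{-2})},
\]
so the statement holds with $C_5 = C_3/4 \in (0,1)$ (recall $C_3 \le 1/2$) and $C_6 = C_4 + C$.

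The step I expect to be the main obstacle is the coupling in the first paragraph: one must verify carefully that Meyer's construction produces a genuine version of $Y$ and that the insertion clock is an honest Poisson process of \emph{constant} rate $p$, independent of $Y^{(\rho)}$ up to time $\Lambda$. It is precisely the translation invariance of $p^{\phi}$ that makes this rate constant; everything after it is the short union bound above together with the elementary inequality $\phi((2r)^{-2}) \ge \tfrac14\phi(r^{-2})$.
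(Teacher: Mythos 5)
Your proof is correct, and it takes a genuinely different route to the key comparison between the truncated and full processes. The paper derives the inequality
\[
\bbP^x(\tau^{(\rho)}(x,r) \le t) \le \bbP^x(\tau^{Y}(x,r) \le t) + c\, t\,\phi(\rho^{-2})
\]
analytically, by invoking \cite[Lemma 7.8]{CKW16} to compare the killed semigroups $Q_t^{B(x,r)}$ and $Q_t^{(\rho),B(x,r)}$ evaluated at $\bbjedan_{B(x,r)}$, and then translating $Q_t^{B}\bbjedan_B(x) = \bbP^x(\tau^Y > t)$ and $Q_t^{(\rho),B}\bbjedan_B(x) = \bbP^x(\tau^{(\rho)} > t)$. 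This requires a side remark in the paper, since \cite[Lemma 7.8]{CKW16} is stated under a global scaling assumption that has to be traced back to the tail estimate of Lemma \ref{lm:Lemma21_from_CKW}. You instead realize the same inequality probabilistically via Meyer's piecing-out: since $p^{\phi}$ is translation invariant, the removed jump rate $p = \bbP(\aps{S^{\phi}_1}>\rho)$ is constant, the first extra jump time $\Lambda$ is $\mathrm{Exp}(p)$, and the coupled paths agree on $[0,\Lambda)$, giving $\bbP^x(\tau^{(\rho)}\le t) \le \bbP^x(\tau^{Y}\le t) + \bbP(\Lambda\le t) \le \bbP^x(\tau^{Y}\le t) + pt$; the bound $p \le h(\rho) \le C\phi(\rho^{-2})$ then comes from Lemma \ref{lm:h_le_phi} alone. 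The remaining reduction — applying Lemma \ref{lm:UB_pom1} and the elementary estimate $\phi((2r)^{-2}) \ge \tfrac14\phi(r^{-2})$ from \eqref{eq:phi(v)/phi(u)_le_v/u} — is identical in both proofs. Your approach is more elementary and self-contained in that it sidesteps the appeal to \cite[Lemma 7.8]{CKW16} and its scaling caveat, at the cost of having to justify the Meyer coupling carefully (which you rightly flag as the delicate step). One can argue the paper's route is shorter given that it already cites and adapts several results from \cite{CKW16} in this section, but your argument requires strictly fewer external ingredients.
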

\begin{proof}
	By Lemma \ref{lm:UB_pom1} and \eqref{eq:phi(v)/phi(u)_le_v/u} we get that for all $x \in \bbZ^d$ and $r, t > 0$
	\begin{equation*}
		\bbP^x(\tau^Y(x, r) \le t) \le 1 - C_3/4 + C_4 t \phi((2r)^{-2}).
	\end{equation*}
	According to \cite[Lemma 7.8]{CKW16}, for all $t > 0$
	\begin{equation}\label{Rho_comparison}
		Q_t^{B(x, r)} \bbjedan_{B(x, r)}(x) \le Q_t^{(\rho), B(x, r)} \bbjedan_{B(x, r)}(x) + c_3 t \phi(\rho^{-2}).
	\end{equation}
\noindent \textit{Remark.} 
In \cite[Lemma 7.8]{CKW16} the authors assume more restrictive assumption on the function $\phi$ then our condition \eqref{eq:scaling}, namely they require the  global scaling. The key tool to prove \eqref{Rho_comparison} is, however,  \cite[Lemma 2.1]{CKW16} which in our case is covered by Lemma \ref{lm:Lemma21_from_CKW}. \\
\vspace*{0,1cm} 

\noindent We notice that
	\begin{align*}
		Q_t^{B(x, r)} \bbjedan_{B(x, r)}(x) & = \bbE^x\UGL{\bbjedan_{B(x, r)}(Y_t) \bbjedan_{\{\tau^Y(x, r) > t\}}} = \bbP^x(\tau^Y(x, r) > t), \\
		Q_t^{(\rho), B(x, r)} \bbjedan_{B(x, r)}(x) & = \bbE^x\UGL{\bbjedan_{B(x, r)}(Y_t^{(\rho)}) \bbjedan_{\{\tau^{(\rho)}(x, r) > t\}}} = \bbP^x(\tau^{(\rho)}(x, r) > t)
	\end{align*}
and whence
	\begin{align*}
	 \bbP^x(\tau^Y(x, r) > t) \le \bbP^x(\tau^{(\rho)}(x, r) > t) + c_1t \phi(\rho^{-2}) .
	\end{align*}
This and Lemma \ref{lm:UB_pom1} imply
\begin{align*}
\bbP^x(\tau^{(\rho)}(x, r) \le t) \le 1 - \frac{C_3}{4} + C_4 t \phi((2r)^{-2}) + c_1 t \phi(\rho^{-2})
\end{align*}
and the result follows if we choose $C_5 = C_3/4 <1$ and $C_6 = C_4 + c_1$.
\end{proof}

\begin{LM}\label{lm:UB_pom3}
	There exist constants $\varepsilon \in (0, 1)$ and $C_7 > 0$ such that for $x\in \mathbb{Z}^d$ and all $r, \lambda, \rho > 0$ with 
	$\lambda \ge C_7 \phi((r \wedge \rho)^{-2})$ it holds
	\begin{equation}\label{eq:UB_pom3}
		\bbE^x\UGL{e^{-\lambda \tau^{(\rho)}(x, r)}} \le 1 - \varepsilon .
	\end{equation}
\end{LM}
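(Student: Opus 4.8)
The plan is to deduce the exponential-moment bound from the tail estimate of Lemma~\ref{lm:UB_pom2} via the elementary splitting
\[
	\bbE^x\UGL{e^{-\lambda \tau^{(\rho)}(x, r)}} \le \bbP^x\OBL{\tau^{(\rho)}(x, r) \le t} + e^{-\lambda t},
\]
valid for every $t > 0$ (on the event $\{\tau^{(\rho)}(x,r) \le t\}$ the integrand is at most $1$, and on its complement it is at most $e^{-\lambda t}$), followed by a suitable choice of $t$.

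First I would reduce the right-hand side of Lemma~\ref{lm:UB_pom2} to a single quantity. Since $\phi$ is increasing and $r \wedge \rho \le r$, $r \wedge \rho \le \rho$, one has $\phi((2r)^{-2}) \vee \phi(\rho^{-2}) \le \phi(r^{-2}) \vee \phi(\rho^{-2}) \le \phi((r \wedge \rho)^{-2})$, so Lemma~\ref{lm:UB_pom2} gives, for all $t > 0$,
\[
	\bbP^x\OBL{\tau^{(\rho)}(x, r) \le t} \le 1 - C_5 + C_6\, t\, \phi\OBL{(r \wedge \rho)^{-2}}.
\]
Next I would choose $t := C_5 / \OBL{2 C_6\, \phi((r \wedge \rho)^{-2})}$, which is finite and strictly positive because $\phi$ is strictly positive on $(0,\infty)$; with this choice the linear term equals $C_5/2$, and combining the two displays yields
\[
	\bbE^x\UGL{e^{-\lambda \tau^{(\rho)}(x, r)}} \le 1 - \frac{C_5}{2} + \exp\OBL{-\frac{\lambda\, C_5}{2 C_6\, \phi((r \wedge \rho)^{-2})}}.
\]
Under the hypothesis $\lambda \ge C_7\, \phi((r \wedge \rho)^{-2})$ the exponent is at most $-C_7 C_5/(2 C_6)$, so
\[
	\bbE^x\UGL{e^{-\lambda \tau^{(\rho)}(x, r)}} \le 1 - \frac{C_5}{2} + e^{-C_7 C_5/(2 C_6)}.
\]

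Finally I would fix $C_7$ large enough that $e^{-C_7 C_5/(2 C_6)} \le C_5/4$ — for instance $C_7 := (2 C_6/C_5)\log(4/C_5)$ — and conclude \eqref{eq:UB_pom3} with $\varepsilon := C_5/4$, which lies in $(0,1)$ since $C_5 \in (0,1)$. There is no genuine difficulty here: the only points requiring a line of justification are the monotonicity reduction to $\phi((r\wedge\rho)^{-2})$ and the fact that the chosen $t$ is a legitimate positive real, both immediate from the properties of $\phi$. The one mild subtlety worth flagging is that $\varepsilon$ and $C_7$ must be uniform in $x$, $r$, $\rho$, which is automatic because Lemma~\ref{lm:UB_pom2} is itself uniform in these parameters.
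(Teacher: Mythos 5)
Your proof is correct and follows essentially the same route as the paper's: the same splitting $\bbE^x[e^{-\lambda \tau^{(\rho)}(x,r)}] \le \bbP^x(\tau^{(\rho)}(x,r)\le t) + e^{-\lambda t}$, the same reduction via monotonicity of $\phi$ to a bound in $\phi((r\wedge\rho)^{-2})$, and the same choice of $t$ proportional to $1/\phi((r\wedge\rho)^{-2})$ followed by choosing $C_7$ large. The only differences are in the bookkeeping of constants (both arrive at $\varepsilon = C_5/4$).
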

\begin{proof}
By Lemma \ref{lm:UB_pom2}, for any $t > 0$ and $x \in \bbZ^d$,
	\begin{align*}
		\bbE^x \UGL{e^{-\lambda \tau^{(\rho)}(x, r)}}
		& = \bbE^x \UGL{e^{-\lambda \tau^{(\rho)}(x, r)} \bbjedan_{\{\tau^{(\rho)}(x. r) \le t\}}} + \bbE^x \UGL{e^{-\lambda \tau^{(\rho)}(x, r)} \bbjedan_{\{\tau^{(\rho)}(x, r) > t\}}} \\
		& \le \bbP^x(\tau^{(\rho)}(x, r) \le t) + e^{-\lambda t} \\
		& \le 1 - C_5 + C_6 t \OBL{\phi((2r)^{-2}) \vee \phi(\rho^{-2})} + e^{-\lambda t}.
	\end{align*}
We now choose $\varepsilon =C_5/4 \in (0,1)$. We next take $t = c_1 / \phi((r \wedge \rho)^{-2})$, for some $c_1 > 0$, in such a way that $C_6 t \phi((2r)^{-2}) + C_6 t \phi(\rho^{-2}) \le 2 \varepsilon$. Hence, we need to choose $c_1 > 0$ such that
	\begin{equation*}
		\frac{C_6 c_1 \phi((2r)^{-2})}{\phi((r \wedge \rho)^{-2})} + \frac{C_6 c_1 \phi(\rho^{-2})}{\phi((r \wedge \rho)^{-2})}  \le 2 \varepsilon.
	\end{equation*}
	Since $\phi$ is increasing,
	\begin{align*}
\frac{\phi((2r)^{-2})}{\phi((r \wedge \rho)^{-2})} \le 1\quad \mathrm{and}\quad
		\frac{\phi(\rho^{-2})}{\phi((r \wedge \rho)^{-2})} \le 1
	\end{align*}
and thus it suffices to choose $c_1 \le \varepsilon / C_6$. At last, 
we claim that there is $C_7>0$ such that for $\lambda \ge C_7 \phi((r \wedge \rho)^{-2})$ we will have $e^{-\lambda t} \le \varepsilon$. 
Indeed, with such a choice we get that $\lambda t \geq C_7 c_1$ and thus we can choose $C_7$ so big that $e^{-\lambda t} \le C_5/4 =\varepsilon$.
We finally  obtain 
	\begin{equation*}
		\bbE^x \UGL{e^{-\lambda \tau^{(\rho)}(x, r)}} \le 1 - C_5 + C_6 t (\phi((2r)^{-2}) + \phi(\rho^{-2})) + e^{-\lambda t} \le 1 - \varepsilon,
	\end{equation*}
as desired.
\end{proof}

\begin{LM}\label{lm:UB_pom4}
	There exist constants $C_8, C_9 > 0$ such that for $x \in \bbZ^d$ and $R, \rho > 0$
	\begin{equation*}
		\bbE^x\UGL{e^{-C_7 \phi(\rho^{-2}) \tau^{(\rho)}(x, R)}} \le C_8 e^{-C_9 R/\rho },
	\end{equation*}
	where $C_7 > 0$ is the constant from Lemma \ref{lm:UB_pom3}.
\end{LM}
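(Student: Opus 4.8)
The plan is to run a chaining argument across concentric annuli of width $\rho$, using crucially that the truncated process $Y^{(\rho)}$ has jumps of size at most $\rho$. Set $\lambda = C_7 \phi(\rho^{-2})$, where $C_7$ is the constant from Lemma \ref{lm:UB_pom3}, and introduce the stopping times $\xi_0 = 0$ and $\xi_{j+1} = \inf\{t > \xi_j : \aps{Y_t^{(\rho)} - Y_{\xi_j}^{(\rho)}} \ge \rho\}$, i.e.\ the successive exit times of $Y^{(\rho)}$ from balls of radius $\rho$ centred at its current location.

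The first step is the purely geometric observation that $\tau^{(\rho)}(x, R) \ge \xi_n$ with $n = \lfloor R/(2\rho)\rfloor$. By induction on $j$, using that the jump of $Y^{(\rho)}$ at $\xi_j$ has size at most $\rho$, one checks that $\aps{Y_t^{(\rho)} - x} < (2j + 1)\rho$ whenever $\xi_j \le t < \xi_{j+1}$; hence for every $t < \xi_n$ one has $\aps{Y_t^{(\rho)} - x} < (2n - 1)\rho < R$, so the process has not yet exited $B(x, R)$. Consequently $\bbE^x[e^{-\lambda \tau^{(\rho)}(x, R)}] \le \bbE^x[e^{-\lambda \xi_n}]$.

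The second step estimates $\bbE^x[e^{-\lambda \xi_n}]$ by repeated conditioning. Writing $\xi_n = \xi_{n-1} + (\xi_n - \xi_{n-1})$ and applying the strong Markov property at $\xi_{n-1}$, the increment $\xi_n - \xi_{n-1}$ has, conditionally on $\calF_{\xi_{n-1}}$, the law of $\tau^{(\rho)}(\cdot, \rho)$ started from $Y_{\xi_{n-1}}^{(\rho)}$; by translation invariance of $Y^{(\rho)}$ together with Lemma \ref{lm:UB_pom3} applied with $r = \rho$ — so that $\lambda = C_7 \phi((\rho \wedge \rho)^{-2})$ precisely meets its hypothesis — one has $\bbE^z[e^{-\lambda \tau^{(\rho)}(z, \rho)}] \le 1 - \varepsilon$ uniformly in $z$. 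This yields $\bbE^x[e^{-\lambda \xi_n}] \le (1 - \varepsilon)\bbE^x[e^{-\lambda \xi_{n-1}}]$, and iterating down to $\xi_0 = 0$ gives $\bbE^x[e^{-\lambda \xi_n}] \le (1 - \varepsilon)^n$; the event $\{\xi_{n-1} = \infty\}$ is harmless since the integrand vanishes there. Finally, since $n \ge R/(2\rho) - 1$ and $0 < 1 - \varepsilon < 1$, one gets $(1 - \varepsilon)^n \le (1 - \varepsilon)^{R/(2\rho) - 1} = (1 - \varepsilon)^{-1} \exp\big(-\tfrac12 R\rho^{-1} \log \tfrac{1}{1 - \varepsilon}\big)$, which is the claimed bound with $C_8 = (1 - \varepsilon)^{-1}$ and $C_9 = \tfrac12 \log \tfrac{1}{1 - \varepsilon}$, valid for all $R, \rho > 0$ (including the degenerate range $R < 2\rho$, where $n = 0$ and the bound is just $\bbE^x[e^{-\lambda \tau^{(\rho)}(x,R)}] \le 1$). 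The main point requiring care is the geometric step $\tau^{(\rho)}(x, R) \ge \xi_n$: one must leave enough slack so that the size-$\le \rho$ jump occurring at each $\xi_j$ does not eject the process from $B(x, R)$ before time $\xi_n$; once this is in place, the remainder is just Lemma \ref{lm:UB_pom3} combined with the strong Markov property.
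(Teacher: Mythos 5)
Your proof is correct, and it takes a genuinely different route from the paper while reaching the same conclusion with the same ingredients. The paper runs a deterministic $L^\infty$-iteration: with $u(x)=\bbE^x[e^{-\lambda\tau}]$ it sets $m_k=\norm{u}_{L^\infty(B(z,kr))}$, picks near-maximizers $x_k\in B(z,kr)$, and uses the strong Markov property at the exit time $\tau_k=\tau^{(\rho)}(x_k,r)$ plus the jump bound $\aps{Y^{(\rho)}_{\tau_k}-x_k}<r+\rho$ to show $m_k\le\frac{1-\varepsilon}{1-\varepsilon'}m_{k+2}$, iterating over odd indices. This needs the auxiliary $\varepsilon'<\varepsilon$ because the supremum defining $m_k$ need not be attained. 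Your chaining argument replaces this by a sequence of stopping times $\xi_j$, exploits the same two ingredients (Lemma \ref{lm:UB_pom3} for the uniform decrease $\bbE^z[e^{-\lambda\tau^{(\rho)}(z,\rho)}]\le 1-\varepsilon$, and jump size at most $\rho$ for the geometric control $\aps{Y^{(\rho)}_t-x}<(2j+1)\rho$ on $[\xi_j,\xi_{j+1})$), and iterates via the strong Markov property directly on the expectation $\bbE^x[e^{-\lambda\xi_n}]\le(1-\varepsilon)^n$. This avoids the $\varepsilon'$ fudge factor entirely and handles the endpoints (including $R<2\rho$ and $\xi_{n-1}=\infty$) transparently. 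Both proofs are correct; yours is arguably the cleaner implementation of the common idea. One point worth stating explicitly if you write it up fully: the induction showing $\aps{Y^{(\rho)}_t-x}<(2j+1)\rho$ goes through because $\aps{Y^{(\rho)}_{\xi_{j+1}}-Y^{(\rho)}_{\xi_{j+1}^-}}\le\rho$ and $\aps{Y^{(\rho)}_{\xi_{j+1}^-}-x}\le(2j+1)\rho$, giving $\aps{Y^{(\rho)}_{\xi_{j+1}}-x}\le(2j+2)\rho$ and then the strict $<(2j+3)\rho$ on the next interval; the left-limit inequality is non-strict but the final displacement bound $(2n-1)\rho\le R-\rho<R$ still yields $\tau^{(\rho)}(x,R)\ge\xi_n$.
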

\begin{proof}
	We first observe that if $\rho \ge R/2$ then
	we can choose $C_8$ and $C_9$ such that $C_8 \exp(-2C_9) \ge 1$ and result follows. 
Thus we study the case $\rho \in (0, R/2)$. Let $z \in \bbZ^d$, $R > 0$ be fixed. We write for simplicity $\tau = \tau^{(\rho)}(z, R)$. For any fixed $0 < r < R/2$ we set $n = \floor{R/2r}$. Let 
\begin{align*}
u(x) = \bbE^x[e^{-\lambda \tau}]\quad \mathrm{and} \quad m_k = \norm{u}_{L^{\infty}(B(z, kr))},\ \ k \in \{1, 2, \ldots, n\}.
\end{align*}
We fix $\varepsilon$ from Lemma \ref{lm:UB_pom3} and for any $0 < \varepsilon' < \varepsilon$ we choose $x_k \in B(z, kr)$ such that
	\begin{equation*}
		(1 - \varepsilon') m_k < u(x_k) = m_k.
	\end{equation*}
	Since $x_k \in B(z, kr)$ and $n = \floor{R/2r}$ it is easy to see that for any $k \le n - 1$
	\begin{equation*}
		B(x_k, r) \subseteq B(z, (k + 1)r) \subseteq B(z, R).
	\end{equation*}
	Next we consider the following function 
	\begin{equation*}
		v_k(x) = \bbE^x[e^{-\lambda \tau_k}],\quad x\in B(x_k, r),
	\end{equation*}
	where we write $\tau_k = \tau^{(\rho)}(x_k, r)$. By the strong Markov property, for any $x \in B(x_k, r)$,
	\begin{equation*}
		u(x) = \bbE^x[e^{-\lambda \tau_k} e^{-\lambda(\tau - \tau_k)}] = \bbE^x\UGL{e^{-\lambda \tau_k} \bbE^{Y_{\tau_k}^{(\rho)}} (e^{-\lambda \tau} ) } = \bbE^x\UGL{e^{-\lambda \tau_k} u(Y_{\tau_k}^{(\rho)})}.
	\end{equation*}
	Since 
$Y_{\tau_k}^{(\rho)} \in B(x_k, r + \rho)$, we get that for every $x \in B(x_k, r)$ 
	\begin{equation*}
		u(x) \le v_k(x) \norm{u}_{L^{\infty}(B(x_k, r + \rho))}.
	\end{equation*}
	It follows that for any $0 < \rho \le r$
	\begin{equation*}
		u(x_k) \le v_k(x_k) \norm{u}_{L^{\infty}(B(x_k, r + \rho))} \le v_k(x_k) m_{k + 2}.
	\end{equation*}
Since $u(x_k) \ge (1 - \varepsilon')m_k$, we have
	\begin{equation*}
		(1 - \varepsilon')m_k \le v_k(x_k) m_{k + 2}.
	\end{equation*}
In view of Lemma \ref{lm:UB_pom3}, if $\lambda \ge C_7\phi(\rho^{-2})$ and $0 < \rho \le r$ then $v_k(x_k) \le 1 - \varepsilon$. Hence
	\begin{equation*}
		m_k \le \OBL{\frac{1 - \varepsilon}{1 - \varepsilon'}}m_{k + 2}
	\end{equation*}
and iterating yields
	\begin{equation*}
		u(z) \le m_1 \le \OBL{\frac{1 - \varepsilon}{1 - \varepsilon'}}m_3 \le \OBL{\frac{1 - \varepsilon}{1 - \varepsilon'}}^2 m_5 \le \ldots \le \OBL{\frac{1 - \varepsilon}{1 - \varepsilon'}}^{n - 1} m_{2n - 1}.
	\end{equation*}
	Since $u(x) \le 1$, we have $m_{2n - 1} \le 1$. Thus
	\begin{equation*}
		u(z) \le \OBL{\frac{1 - \varepsilon}{1 - \varepsilon'}}^{n - 1}.
	\end{equation*}
Setting $2C_9 = \log \OBL{(1 - \varepsilon')/(1 - \varepsilon)}$ we get
	\begin{align*}
 \OBL{\frac{1 - \varepsilon}{1 - \varepsilon'}}^{n - 1} \le \OBL{\frac{1 - \varepsilon}{1 - \varepsilon'}}^{R/2r - 2} 
	\end{align*}
which gives
\begin{align*}
u(z) \le  C_8 \exp\OBL{-C_9 \frac{R}{r}},
\end{align*}
with $C_8 = e^{4C_9}$. 
If we set $\lambda = C_7 \phi(\rho^{-2})$ and $\rho = r$ we conclude the result.
\end{proof}
\begin{COR}\label{cor:UB_pom5}
	For any $R, \rho, t > 0$ and all $x \in \bbZ^d$
	\begin{equation*}
		\bbP^x(\tau^{(\rho)}(x, R) \le t) \le C_8 e^{-C_9 \frac{R}{\rho} + C_7 t \phi(\rho^{-2})},
	\end{equation*}
	where $C_7 > 0$ is the constant from Lemma \ref{lm:UB_pom3} and $C_8, C_9 > 0$ from Lemma \ref{lm:UB_pom4}.
\end{COR}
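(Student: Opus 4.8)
The plan is to obtain this directly from Lemma \ref{lm:UB_pom4} by the exponential Markov (Chebyshev) inequality. Fix $R, \rho, t > 0$ and $x \in \bbZ^d$, and set $\lambda = C_7 \phi(\rho^{-2})$, where $C_7$ is the constant from Lemma \ref{lm:UB_pom3}. Since $s \mapsto e^{-\lambda s}$ is decreasing, on the event $\{\tau^{(\rho)}(x, R) \le t\}$ we have $e^{-\lambda \tau^{(\rho)}(x, R)} \ge e^{-\lambda t}$. Hence
\[
	\bbP^x\big(\tau^{(\rho)}(x, R) \le t\big) \le \bbP^x\big(e^{-\lambda \tau^{(\rho)}(x, R)} \ge e^{-\lambda t}\big) \le e^{\lambda t}\, \bbE^x\big[e^{-\lambda \tau^{(\rho)}(x, R)}\big],
\]
where the last inequality is Markov's inequality applied to the non-negative random variable $e^{-\lambda \tau^{(\rho)}(x, R)}$.

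It then remains to insert the Laplace-transform bound from Lemma \ref{lm:UB_pom4}, namely $\bbE^x[e^{-C_7 \phi(\rho^{-2}) \tau^{(\rho)}(x, R)}] \le C_8 e^{-C_9 R/\rho}$, which is applicable precisely because $\lambda = C_7 \phi(\rho^{-2})$. This yields
\[
	\bbP^x\big(\tau^{(\rho)}(x, R) \le t\big) \le e^{C_7 t \phi(\rho^{-2})}\, C_8\, e^{-C_9 R/\rho} = C_8\, e^{-C_9 \frac{R}{\rho} + C_7 t \phi(\rho^{-2})},
\]
which is the asserted estimate, with the same constants $C_7, C_8, C_9$ coming from Lemmas \ref{lm:UB_pom3} and \ref{lm:UB_pom4}. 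There is no genuine obstacle here: the corollary is a routine consequence of the Laplace-transform estimate, and the only point requiring care is to choose the parameter $\lambda$ in the Markov inequality to equal the specific exponent $C_7 \phi(\rho^{-2})$ for which Lemma \ref{lm:UB_pom4} was established.
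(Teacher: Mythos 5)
Your proof is correct and is essentially identical to the paper's: both apply the exponential Markov inequality to $e^{-\lambda \tau^{(\rho)}(x,R)}$ with $\lambda = C_7\phi(\rho^{-2})$ and then invoke Lemma \ref{lm:UB_pom4}. The only cosmetic difference is that the paper writes the first step as an equality of events (since $s\mapsto e^{-\lambda s}$ is strictly decreasing), whereas you write it as an inequality, which is also fine.
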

\begin{proof}
By Lemma \ref{lm:UB_pom4}, 
	\begin{align*}
		\bbP^x(\tau^{(\rho)}(x, R) \le t)
		& = \bbP^x \OBL{e^{-C_7 \phi(\rho^{-2}) \tau^{(\rho)}(x, R)} \ge e^{-C_7 \phi(\rho^{-2}) t}} \\
		& \le e^{C_7 \phi(\rho^{-2}) t} \bbE^x \UGL{e^{-C_7 \phi(\rho^{-2}) \tau^{(\rho)}(x, R)}} 
		 \le C_8 e^{-C_9 \frac{R}{\rho} + C_7 t \phi(\rho^{-2})},
	\end{align*}
as desired.
\end{proof}

For any $\rho > 0$ and $x, y \in \bbZ^d$, we define
\begin{equation*}
	J_{\rho}(x, y) = p^{\phi}(x, y) \bbjedan_{\{\aps{x - y} > \rho\}}.
\end{equation*}
By Meyer's decomposition and \cite[Lemma 7.2(1)]{CKW16}, the following estimate holds
\begin{equation}\label{eq:estimate_from_Meyer}
	q(t, x, y) \le q^{(\rho)}(t, x, y) + \bbE^x\Big[\int_0^t \sum_{z \in \bbZ^d} J_{\rho}(Y_s^{(\rho)}, z) q(t - s, z, y) ds\Big], \quad  x, y \in \bbZ^d.
\end{equation}

\begin{PROP}\label{prop:UB_pom6}
	There exists $C_{10} > 0$ such that for all $t, \rho > 0$ and $x \in \bbZ^d$
	\begin{equation*}
		\bbE^x\Big[\int_0^t \sum_{z \in \bbZ^d} J_{\rho}(Y_s^{(\rho)}, z) q(t - s, z, y) ds\Big] \le C_{10} t \rho^{-d} \phi(\rho^{-2}).
	\end{equation*}
\end{PROP}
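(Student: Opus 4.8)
The plan is to estimate the integrand pointwise by a deterministic constant, uniformly in the current position of the truncated process, so that the time integration and the expectation become trivial. The crucial point is that the truncated jump kernel $J_\rho$ is bounded by the value $j(\rho)$ of the radial profile from \eqref{j_function}. Indeed, if $\aps{w - z} > \rho > 0$ then in particular $w \neq z$, so by the one-step estimate \eqref{eq:one_step_prob_estimates} we have $J_\rho(w, z) = p^{\phi}(w, z) \le c_1 j(\aps{w - z})$, and since the function $j$ is decreasing and $\aps{w - z} > \rho$,
\[
	J_{\rho}(w, z) \le c_1\, j(\rho) = c_1\, \rho^{-d} \phi(\rho^{-2}), \qquad w, z \in \bbZ^d,
\]
the inequality being trivially true also when $\aps{w - z} \le \rho$, as then $J_\rho(w,z) = 0$.

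Next I would combine this with the observation that, after summation in the space variable, $q(t - s, \cdot, y)$ carries unit mass. Since the random walk $S^{\phi}$ is symmetric, so is $Y$, hence $q(t - s, z, y) = q(t - s, y, z)$ and therefore $\sum_{z \in \bbZ^d} q(t - s, z, y) = \sum_{z \in \bbZ^d} \bbP^y(Y_{t-s} = z) = 1$. Consequently, for every $w \in \bbZ^d$ and every $0 < s < t$,
\[
	\sum_{z \in \bbZ^d} J_{\rho}(w, z)\, q(t - s, z, y) \le c_1\, \rho^{-d} \phi(\rho^{-2}) \sum_{z \in \bbZ^d} q(t - s, z, y) = c_1\, \rho^{-d} \phi(\rho^{-2}).
\]
This bound does not depend on $w$, so substituting $w = Y_s^{(\rho)}$, taking expectations and integrating over $s \in (0, t)$ gives
\[
	\bbE^x\Big[\int_0^t \sum_{z \in \bbZ^d} J_{\rho}(Y_s^{(\rho)}, z)\, q(t - s, z, y)\, ds\Big] \le c_1\, t\, \rho^{-d} \phi(\rho^{-2}),
\]
which is the assertion with $C_{10} = c_1$.

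I do not expect any genuine obstacle here; the only point worth a word of care is the normalization $\sum_z q(t - s, z, y) = 1$, which relies on the symmetry of $S^{\phi}$ (equivalently, on reversibility of $Y$ with respect to the counting measure) rather than on the cruder estimate $q \le 1$. Indeed, bounding $q(t - s, z, y) \le 1$ and summing $J_\rho(w, z)$ over $z$ via Lemma \ref{lm:Lemma21_from_CKW} would only yield $c_0\, t\, \phi(\rho^{-2})$, which misses the gain $\rho^{-d}$ for $\rho \ge 1$; it is precisely the monotonicity of $j$, used to pull the factor $j(\rho)$ out of the sum while keeping the full mass of $q$, that produces the sharp power of $\rho$.
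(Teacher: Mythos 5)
Your proof is correct and follows essentially the same route as the paper's (very terse) argument: bound $J_\rho$ pointwise by $j(\rho)=\rho^{-d}\phi(\rho^{-2})$ via the one-step estimate \eqref{eq:one_step_prob_estimates} and monotonicity of $j$, then use the symmetry of $q$ to sum out the $z$-variable to total mass $1$, which is exactly what the paper's phrase ``This and symmetry imply the result'' refers to. Your closing remark correctly identifies why the cruder bound $q\le 1$ combined with Lemma \ref{lm:Lemma21_from_CKW} would lose the factor $\rho^{-d}$.
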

\begin{proof}
By monotonicity and  \eqref{eq:one_step_prob_estimates} 
we get $J_{\rho}(x, y) \le C_{10} \rho^{-d} \phi(\rho^{-2})$, for some $C_{10}>0$.
This and symmetry imply the result.
\end{proof}

In the next Lemma we prove the upper bound for the transition kernel of the truncated process.
\begin{LM}\label{lm:UB_pom8}
	For all $t \ge 1$ and $x, y \in \bbZ^d$
	\begin{equation}\label{eq:UB_pom8}
		q^{(\rho)}(t, x, y) \le C_{11} \OBL{\phi^{-1}(t^{-1})}^{d/2} \exp \OBL{C_{12} t \phi(\rho^{-2}) - C_{13} \frac{\aps{x - y}}{\rho}},
	\end{equation}
	where $C_{11}, C_{12}, C_{13} > 0$ are constants independent of $\rho$.
\end{LM}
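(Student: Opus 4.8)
The plan is to deduce the off-diagonal estimate \eqref{eq:UB_pom8} from the on-diagonal bound of Lemma~\ref{lm:UB_pom7} by one Chapman--Kolmogorov splitting at time $t/2$, together with the exit-time estimate of Corollary~\ref{cor:UB_pom5}. First I would record a \emph{crude global bound}: for some $c_1,c_2>0$ one has $q^{(\rho)}(t,x,y)\le c_1 e^{c_2 t\phi(\rho^{-2})}\OBL{\phi^{-1}(t^{-1})}^{d/2}$ for all $t\ge 1$, $x,y\in\bbZ^d$ and $\rho>0$. The generator of $Y^{(\rho)}$ equals that of $Y$ minus the large-jump operator, which is a positivity-preserving operator minus multiplication by $k_\rho(x):=\sum_{\aps{x-y}>\rho}p^{\phi}(x,y)$; the standard Duhamel domination argument for positivity-preserving semigroups then gives $q^{(\rho)}(t,x,y)\le e^{t\norm{k_\rho}_{\infty}}q(t,x,y)$, while $\norm{k_\rho}_{\infty}\le c\,\phi(\rho^{-2})$ by \eqref{eq:one_step_prob_estimates} and Lemma~\ref{lm:Lemma21_from_CKW}. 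Combining this with Lemma~\ref{lm:UB_pom7} proves the claim. (Alternatively, following \cite{CKW16}, the same bound follows from a Faber--Krahn/Nash inequality for $\calE^{(\rho)}$ carrying a defect of order $\phi(\rho^{-2})$, which is exactly what produces the factor $e^{c_2 t\phi(\rho^{-2})}$.) In particular \eqref{eq:UB_pom8} is already immediate when $\aps{x-y}\lesssim\rho$, and the remaining case $1\le t<2$ can be disposed of directly from Corollary~\ref{cor:UB_pom5}.

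For the \emph{off-diagonal decay}, assume $t\ge 2$ and put $R=\aps{x-y}$. I would start from the identity
\[
q^{(\rho)}(t,x,y)=\sum_{z\in\bbZ^d}q^{(\rho)}(t/2,x,z)\,q^{(\rho)}(t/2,z,y)
\]
and split the sum according to whether $\aps{x-z}\ge R/2$ or $\aps{x-z}<R/2$. In the first part I bound $q^{(\rho)}(t/2,z,y)$ by Step~1 (using \eqref{eq:scaling_for_inverse} to pass from $\phi^{-1}((t/2)^{-1})$ to $\phi^{-1}(t^{-1})$) and observe that
\[
\sum_{\aps{x-z}\ge R/2}q^{(\rho)}(t/2,x,z)=\bbP^x\OBL{Y^{(\rho)}_{t/2}\notin B(x,R/2)}\le\bbP^x\OBL{\tau^{(\rho)}(x,R/2)\le t/2},
\]
which Corollary~\ref{cor:UB_pom5} bounds by $C_8\,e^{-C_9 R/(2\rho)+C_7 t\phi(\rho^{-2})/2}$. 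In the second part every summand satisfies $\aps{z-y}>R/2$, so symmetrically I bound $q^{(\rho)}(t/2,x,z)$ by Step~1 and estimate $\sum_{\aps{x-z}<R/2}q^{(\rho)}(t/2,z,y)\le\bbP^y(\tau^{(\rho)}(y,R/2)\le t/2)$ again via Corollary~\ref{cor:UB_pom5}. Adding the two contributions yields \eqref{eq:UB_pom8} with explicit $C_{11},C_{12},C_{13}>0$ that do not depend on $\rho$.

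The subtle point, which dictates this particular argument, is to retain the on-diagonal order $\OBL{\phi^{-1}(t^{-1})}^{d/2}$ \emph{and} the spatial decay $e^{-C_{13}\aps{x-y}/\rho}$ simultaneously: Corollary~\ref{cor:UB_pom5} on its own produces only spatial decay, and the more obvious strong-Markov decomposition at the first exit time of $B(x,R/2)$ leaves a residual time $t-\tau$ that may be too short to feed back into Lemma~\ref{lm:UB_pom7}. Splitting at the fixed midpoint $t/2$ circumvents this, because the dichotomy $\aps{x-z}\ge R/2$ versus $\aps{z-y}\ge R/2$ forces one of the two half-time transitions to cross a ball of radius $R/2$ (yielding an exit probability, hence the exponential spatial factor), while the complementary transition runs over a time comparable to $t$ and therefore carries the on-diagonal bound of Step~1.
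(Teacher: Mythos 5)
Your proposal follows essentially the same route as the paper: establish the crude bound $q^{(\rho)}(t,x,y)\le C_2\,(\phi^{-1}(t^{-1}))^{d/2}e^{c_0 t\phi(\rho^{-2})}$ (the paper cites \cite[Lemma 7.2(2)]{CKW16} together with Lemma~\ref{lm:Lemma21_from_CKW} and Lemma~\ref{lm:UB_pom7}, which is exactly the Duhamel domination you describe), dispose of $\aps{x-y}<2\rho$ trivially, and then perform a Chapman--Kolmogorov split at the midpoint with the exit-time bound from Corollary~\ref{cor:UB_pom5} controlling the escaped mass. The only cosmetic difference is that the paper estimates $q^{(\rho)}(2t,x,y)$ and then replaces $2t$ by $t$ via \eqref{eq:scaling_for_inverse} (plus monotonicity for $1\le t\le 2$), whereas you split $q^{(\rho)}(t,x,y)$ directly at $t/2$; note also that your closing remark about handling $1\le t<2$ ``directly from Corollary~\ref{cor:UB_pom5}'' is slightly loose, since the on-diagonal factor and a monotonicity comparison of $\phi^{-1}((t/2)^{-1})$ with $\phi^{-1}(t^{-1})$ are still needed there.
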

\begin{proof}
A direct application of \cite[Lemma 7.2(2)]{CKW16} combined with Lemma \ref{lm:Lemma21_from_CKW} and Lemma \ref{lm:UB_pom7}, imply that for all $t > 0$ and $x, y \in \bbZ^d$ we have
	\begin{equation}\label{eq:bound_for_q^rho(t,x,y)-1st_part}
		q^{(\rho)}(t, x, y) \le q(t, x, y) e^{t c_0 \phi(\rho^{-2})} \le C_2 \OBL{\phi^{-1}(t^{-1})}^{d/2} \exp(c_0 t \phi(\rho^{-2})).
	\end{equation}
	We first observe that for $\aps{x - y} < 2\rho$ relation \eqref{eq:UB_pom8} is trivial. Indeed, since 
	\begin{equation*}
 \exp\OBL{\frac{-C_{13} \aps{x - y}}{\rho}} > \exp(-2C_{13}),
	\end{equation*}
	for any $C_{13} > 0$, we get
	\begin{align}\label{al:|x-y|_le_2rho}
		q^{(\rho)} (t, x, y)
		& \le C_{2} \OBL{\phi^{-1}(t^{-1})}^{d/2} \exp(c_0 t \phi(\rho^{-2})) \frac{\exp(-2C_{13})}{\exp(-2C_{13})} \nonumber \\
		& \le C_{11} \OBL{\phi^{-1}(t^{-1})}^{d/2} \exp \Big(C_{12} t \phi(\rho^{-2}) - C_{13} \frac{\aps{x - y}}{\rho}\Big),
	\end{align}
	for any $C_{11} \ge C_{2}/\exp(-2C_{13})$, $C_{12} \ge c_0$.
	
Assume that $\aps{x - y} \ge 2\rho$. By Corollary \ref{cor:UB_pom5}, 
	\begin{align}\label{al:bound_for_Q_pom}
		Q_t^{(\rho)} \bbjedan_{B(x, r)^c}(x)
		 \le \bbP^x(\tau^{(\rho)}(x, r) \le t) 
		\le C_8 \exp \Big(-C_9 \frac{r}{\rho} + C_7 t \phi(\rho^{-2})\Big).
	\end{align}
We set $r = \aps{x - y}/2$ and write
	\begin{align*}
		q^{(\rho)}(2t, x, y)
		& = \sum_{z \in \bbZ^d} q^{(\rho)}(t, x ,z) q^{(\rho)}(t, z, y) \\
		& \le \sum_{z \in B(x, r)^c} q^{(\rho)}(t, x ,z) q^{(\rho)}(t, z, y) + \sum_{z \in B(y, r)^c} q^{(\rho)}(t, x ,z) q^{(\rho)}(t, z, y).
	\end{align*}
By \eqref{eq:bound_for_q^rho(t,x,y)-1st_part} and \eqref{al:bound_for_Q_pom} we get
	\begin{align*}
		\sum_{z \in B(x, r)^c} q^{(\rho)}(t, x ,z) q^{(\rho)}(t, z, y) 
		&\le 
		C_2 \OBL{\phi^{-1}(t^{-1})}^{d/2} 
		e^{c_0 t \phi(\rho^{-2})} \sum_{z \in B(x, r)^c} q^{(\rho)}(t, x, z) \\
		& \le C_2 C_8  \OBL{\phi^{-1}(t^{-1})}^{d/2} e^{c_0 t \phi(\rho^{-2})} e^{-C_9 \frac{r}{\rho} + C_7 t \phi(\rho^{-2})} \\
		& = C_2 C_8 \OBL{\phi^{-1}(t^{-1})}^{d/2} 
		e^{(c_0 + C_7) t \phi(\rho^{-2}) - \frac{C_9 }{2}\frac{\aps{x - y}}{\rho}}.
	\end{align*}
We can show a similar bound for $z\in B(y, r)^c$ and thus, for every $t > 0$ and $\aps{x - y} \ge 2\rho$ we have
	\begin{equation*}
		q^{(\rho)}(2t, x, y) \le 2C_2 C_8 \OBL{\phi^{-1}(t^{-1})}^{d/2} 
		e^{(c_0 + C_7) t \phi(\rho^{-2}) - \frac{C_9 }{2}\frac{\aps{x - y}}{\rho}}.
	\end{equation*}
Replacing $t$ with $t/2$ yields \eqref{eq:UB_pom8}. 
It only remains to show that
	\begin{equation}\label{eq:key_bound_for_UB_pom8}
		\frac{\phi^{-1}((t/2)^{-1})}{\phi^{-1}(t^{-1})} \le c_1,
	\end{equation}
	for some constant $c_1 > 0$.
To prove \eqref{eq:key_bound_for_UB_pom8} we have to apply scaling condition \eqref{eq:scaling_for_inverse} and this is the reason why estimate  \eqref{eq:UB_pom8} works only for $t\geq 1$. Indeed, for $t \ge 2$, by \eqref{eq:scaling_for_inverse} we get
	\begin{equation*}
		\frac{\phi^{-1}((t/2)^{-1})}{\phi^{-1}(t^{-1})} 
		\le  \OBL{\frac{2}{c_*}}^{1/\alpha_*}.
	\end{equation*}
For $1 \le t \le 2$ we simply use monotonicity and \eqref{eq:key_bound_for_UB_pom8} follows.
\end{proof}

In the rest of this section we use the notation 
\begin{align*}
r_t = \frac{1}{\sqrt{\phi^{-1}(t^{-1})}}, \quad t\geq 1.
\end{align*}

\begin{LM}
There are $N \in \bbN$ with $N > (2\alpha_* + d)/(2\alpha_*)$ and $c_1 \ge 1$ such that for all $r > 0$, $t \ge 1$ and $x \in \bbZ^d$
	\begin{equation}\label{eq:key_ineq_in_UHK_for_Y}
		\sum_{y \in B(x, r)^c} q(t, x, y) \le c_1 r^{-\theta}\OBL{\phi^{-1}(t^{-1})}^{-\theta /2},
	\end{equation}
where $0<\theta = 2\alpha_* - (2\alpha_* + d)/N$ and $\alpha_*$ is the constant from \eqref{eq:scaling}. 
\end{LM}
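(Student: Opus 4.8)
The plan is to adapt the Meyer‑decomposition strategy of \cite{CKW16}: dominate $q(t,x,y)$ by the transition kernel $q^{(\rho)}$ of the truncated process plus a large‑jump error, and then optimise over the truncation level $\rho$. Fix any $N\in\bbN$ with $N>(2\alpha_*+d)/(2\alpha_*)$, so that $\theta=2\alpha_*-(2\alpha_*+d)/N\in(0,2\alpha_*)$. Recall that for $t\ge1$ one has $r_t=\OBL{\phi^{-1}(t^{-1})}^{-1/2}\ge1$, since $\phi(1)=1$ and $\phi$ is increasing, and that $r_t^{-2}=\phi^{-1}(t^{-1})$, hence $t\,\phi(r_t^{-2})=1$.

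First I would dispose of the range $r\le r_t$: there $c_1\OBL{r_t/r}^{\theta}\ge c_1\ge1\ge\sum_{y\in B(x,r)^c}q(t,x,y)$ provided $c_1\ge1$, so \eqref{eq:key_ineq_in_UHK_for_Y} is trivial. Assume now $r>r_t$. Summing \eqref{eq:estimate_from_Meyer} over $y\in B(x,r)^c$, using $\sum_{y\in B(x,r)^c}q(t-s,z,y)\le1$ and the estimate $\sum_{z}J_\rho(w,z)\le c\,\phi(\rho^{-2})$ that results from \eqref{eq:one_step_prob_estimates} and Lemma~\ref{lm:Lemma21_from_CKW}, I obtain
\begin{equation*}
\sum_{y\in B(x,r)^c}q(t,x,y)\le\sum_{y\in B(x,r)^c}q^{(\rho)}(t,x,y)+c\,t\,\phi(\rho^{-2}).
\end{equation*}
For the first term I would invoke Lemma~\ref{lm:UB_pom8} together with the elementary lattice bound $\sum_{\aps{x-y}\ge r}e^{-C_{13}\aps{x-y}/\rho}\le c\,\rho^{d}e^{-C_{13}r/(2\rho)}$ (valid for $\rho\ge1$), which gives
\begin{equation*}
\sum_{y\in B(x,r)^c}q^{(\rho)}(t,x,y)\le c\OBL{\phi^{-1}(t^{-1})}^{d/2}\rho^{d}\exp\OBL{C_{12}\,t\,\phi(\rho^{-2})-\tfrac{C_{13}}{2}\cdot\tfrac{r}{\rho}}.
\end{equation*}

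Now I would take $\rho:=r_t\OBL{r/r_t}^{\theta/(2\alpha_*)}$. Since $0<\theta/(2\alpha_*)<1$ and $r>r_t\ge1$ we have $r_t\le\rho\le r$, in particular $\rho\ge1$, so \eqref{eq:scaling} applies to $\rho^{-2}\le r_t^{-2}\le1$. The lower scaling bound yields $t\,\phi(\rho^{-2})\le c_*^{-1}\,t\,\phi(r_t^{-2})\OBL{r_t/\rho}^{2\alpha_*}=c_*^{-1}\OBL{r_t/r}^{\theta}\le c_*^{-1}$, so $e^{C_{12}t\phi(\rho^{-2})}\le e^{C_{12}/c_*}$ and the error term is $\le c\,c_*^{-1}\OBL{r_t/r}^{\theta}$. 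Also $\OBL{\phi^{-1}(t^{-1})}^{d/2}\rho^{d}=\OBL{\rho/r_t}^{d}=\OBL{r/r_t}^{d\theta/(2\alpha_*)}$ and $r/\rho=\OBL{r/r_t}^{1-\theta/(2\alpha_*)}$. Setting $u=r/r_t\ge1$, $\beta=1-\theta/(2\alpha_*)\in(0,1)$ and $\gamma=d\theta/(2\alpha_*)$, the truncated contribution is thus $\le c\,u^{\gamma}e^{-(C_{13}/2)u^{\beta}}$; since $\beta>0$, the function $u\mapsto u^{\gamma+\theta}e^{-(C_{13}/2)u^{\beta}}$ is bounded on $[1,\infty)$, so this is $\le c\,u^{-\theta}=c\OBL{r_t/r}^{\theta}$. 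Adding the two contributions and using $\OBL{r_t/r}^{\theta}=r^{-\theta}\OBL{\phi^{-1}(t^{-1})}^{-\theta/2}$ proves \eqref{eq:key_ineq_in_UHK_for_Y} for $r>r_t$ with a suitable $c_1\ge1$.

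The heart of the matter is that $\rho$ must simultaneously be large enough that $t\,\phi(\rho^{-2})$ — which governs both the large‑jump error and the factor $e^{C_{12}t\phi(\rho^{-2})}$ — is comparable to the target $\OBL{r_t/r}^{\theta}$, and small enough that $e^{-C_{13}r/\rho}$ still overpowers the polynomial prefactor $\OBL{\rho/r_t}^{d}$ produced by the on‑diagonal estimate and the lattice summation. The interpolation $\rho=r_t\OBL{r/r_t}^{\theta/(2\alpha_*)}$ meets both demands exactly when $\theta<2\alpha_*$; the loss recorded by $N$ is precisely the price of keeping $\beta=1-\theta/(2\alpha_*)>0$, so that $u^{\beta}\to\infty$ and the exponential factor decays faster than every power of $u$. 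Secondary points to watch are that $\rho\ge1$ (needed both so that the truncated walk is non‑degenerate on $\bbZ^d$ and so that \eqref{eq:scaling} is applicable) and that the compact range of $u=r/r_t$ has to be handled by boundedness/continuity rather than by the large‑$u$ asymptotics.
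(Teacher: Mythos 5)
Your argument is correct and takes a genuinely different route from the paper's. The paper sums over dyadic annuli $\{2^n r\le|x-y|<2^{n+1}r\}$ and uses an annulus-dependent truncation level $\rho_n=2^{n\alpha}r^{1-1/N}r_t^{1/N}$, which forces the specific choice $N=\lfloor 2+d/(2\alpha_*)\rfloor$ and a delicate balance of exponents via the parameter $\alpha\in\bigl(\tfrac{d}{d+2\alpha_*}\vee\tfrac12,1\bigr)$; crucially, because it applies the pointwise Meyer error $C_{10}\,t\,j(\rho_n)$ on each annulus, the annular decomposition is not optional there — summing a pointwise bound over all of $B(x,r)^c$ with a single $\rho$ would diverge. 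You sidestep this entirely by summing Meyer's inequality over $y$ first and using $\sum_{y\in B(x,r)^c}q(t-s,z,y)\le1$, so the large-jump error is $c\,t\,\phi(\rho^{-2})$ globally via Lemma~\ref{lm:Lemma21_from_CKW}, and a single interpolating level $\rho=r_t(r/r_t)^{\theta/(2\alpha_*)}$ with the elementary lattice bound $\sum_{|x-y|\ge r}e^{-C_{13}|x-y|/\rho}\le c\rho^d e^{-C_{13}r/(2\rho)}$ does the rest. Your version is cleaner: it works for every admissible $N$ (not just the paper's particular choice), it replaces the two-parameter bookkeeping $(\alpha,N)$ with the single exponent $\theta/(2\alpha_*)\in(0,1)$, and the reason $\theta<2\alpha_*$ is needed — namely that $\beta=1-\theta/(2\alpha_*)>0$ keeps $e^{-(C_{13}/2)u^{\beta}}$ super-polynomial — is transparent. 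The small points you should keep explicit in a final write-up are: (i) $\rho\ge r_t\ge1$ and $\rho\le r$ for $t\ge1$, $r>r_t$, which justifies both the lattice estimate and the applicability of the scaling bound \eqref{eq:scaling} at $\rho^{-2}\le r_t^{-2}\le1$; and (ii) the boundary range $r$ comparable to $r_t$ (i.e., $u$ near $1$) is covered by continuity/boundedness of $u\mapsto u^{\gamma+\theta}e^{-(C_{13}/2)u^{\beta}}$ on $[1,\infty)$, not by its tail behaviour — you did flag both, so the proof stands.
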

\begin{proof}

We first observe that for $r \le r_t$ relation \eqref{eq:key_ineq_in_UHK_for_Y} is trivially satisfied, as in this case $r_t / r \ge 1$. 

We assume that $r > r_t$.  We set
	\begin{equation}\label{eq:def_of_N}
		N = \floor{2 + d/(2\alpha_*)}
	\end{equation}
and with this $N$ we define a sequence
	\begin{equation*}
		\rho_n = 2^{n \alpha} r^{1 - 1/N} r_t^{1/N}, \quad n \in \bbN,
	\end{equation*}
	where
	\begin{equation}\label{eq:range_of_alpha}
		\left( \frac{d}{d + 2\alpha_*} \vee \frac{1}{2}\right) <\alpha < 1.
	\end{equation}
We now show that under this choice we have
	\begin{equation}\label{eq:2^nr/rho_le_rho/r_t}
		\frac{2^n r}{\rho_n} \le \frac{\rho_n}{r_t}
	\end{equation}
and 
	\begin{equation}\label{eq:tphi(rho)_le_1}
t \phi(\rho_n^{-2}) \le 1.
	\end{equation}
Indeed, \eqref{eq:2^nr/rho_le_rho/r_t} follows from \eqref{eq:def_of_N} and from the fact that  $\alpha \geq 1/2 $, and
\begin{align*}
\frac{2^n r}{\rho_n} = 2^{n(1 - \alpha)} \OBL{\frac{r}{r_t}}^{1/N}, \  \mathrm{and}\quad 
\frac{\rho_n}{r_t} = 2^{n \alpha} \OBL{\frac{r}{r_t}}^{1 - 1/N}.
\end{align*}
Similarly, \eqref{eq:tphi(rho)_le_1} follows, since under our choice we see that $\rho_n \geq r_t$.
	
Recall that by \eqref{eq:estimate_from_Meyer} and Proposition \ref{prop:UB_pom6} we have
	\begin{equation}\label{eq:F1}
		q(t, x, y) \le q^{(\rho)}(t, x, y) + C_{10} t j(\rho),
	\end{equation}
for all $\rho, t > 0$ and $x, y \in \bbZ^d$. Next, 
by Lemma \ref{lm:UB_pom8}, for all $t \ge 1$, $x, y \in \bbZ^d$ and $n \in \bbN$, we have
	\begin{equation*}
		q^{(\rho_n)}(t, x, y) \le C_{11} \OBL{\phi^{-1}(t^{-1})}^{d/2} \exp \OBL{C_{12} t \phi(\rho_n^{-2}) - C_{13} \frac{\aps{x - y}}{\rho_n}},
	\end{equation*}
	where $C_{11}, C_{12}, C_{13} > 0$ are constants independent of $\rho_n$. Hence, for all $2^n r \le \aps{x - y} < 2^{n + 1} r$ and all $t \ge 1$ we have
	\begin{equation*}
		q^{(\rho_n)}(t, x, y) \le C_{11} \OBL{\phi^{-1}(t^{-1})}^{d/2} \exp \OBL{C_{12} t \phi(\rho_n^{-2}) - C_{13} \frac{2^n r}{\rho_n}}.
	\end{equation*}
By \eqref{eq:tphi(rho)_le_1} we get
	\begin{align}\label{al:F2}
		q^{(\rho_n)}(t, x, y) \le c_2\OBL{\phi^{-1}(t^{-1})}^{d/2} \exp \OBL{-C_{13} \frac{2^n r}{\rho_n}} .
	\end{align}
	Thus, by \eqref{eq:F1} and \eqref{al:F2} we get, for $t \ge 1$ and $x \in \bbZ^d$
	\begin{align*}
		\sum_{y \in B(x, r)^c} q(t, x, y)
		\le \sum_{n = 0}^{\infty} & \sum_{2^n r \le \aps{x - y} < 2^{n + 1} r} \big(q^{(\rho_n)}(t, x, y) + C_{10} tj(\rho_n)\big) \\
		\le c_3\sum_{n = 0}^{\infty} & (2^nr)^d \OBL{\phi^{-1}(t^{-1})}^{d/2} e^{-C_{13} \frac{2^n r}{\rho_n}} \\
		& + c_4 \sum_{n = 0}^{\infty} (2^nr)^d  t\, j(\rho_n)
           =I_1 + I_2.
	\end{align*}
	We first estimate $I_2$. Since $\rho_n^{-2} \le \phi^{-1}(t^{-1}) \le 1$, we can use \eqref{eq:scaling} to get
\begin{align*}
t \phi(\rho_n^{-2}) \le \frac{1}{c_*} \OBL{\frac{r_t}{\rho_n}}^{2\alpha_*}.
\end{align*}
This implies
	\begin{align*}
		I_2
		 \le c_4 \sum_{n = 0}^{\infty} \OBL{\frac{2^n r}{\rho_n}}^d \frac{1}{c_*} \OBL{\frac{r_t}{\rho_n}}^{2\alpha_*} 
		= \frac{c_4}{c_*} \OBL{\frac{r_t}{r}}^{2\alpha_* - (2\alpha_* + d)/N} \sum_{n = 0}^{\infty} 2^{n(d - \alpha(d + 2\alpha_*))}.
	\end{align*}
By \eqref{eq:range_of_alpha}, $	d - \alpha(d + 2\alpha_*) < 0  $ and whence 
	\begin{equation}\label{eq:bound_for_I2}
		I_2 \le c_5 \OBL{\frac{r_t}{r}}^{2\alpha_* - (2\alpha_* + d)/N}.
	\end{equation}
	We proceed to  estimate $I_1$. 
	There exists a constant $c_K > 0$ such that for $x \ge C_{13}$ $e^{-x} \le c_K x^{-K}$. Applying this, we get
	\begin{equation*}
		\exp \OBL{-C_{13} \frac{2^n r}{\rho_n}} \le c_K \OBL{\frac{C_{13} 2^n r}{\rho_n}}^{-K}, \quad K > 0.
	\end{equation*}
We set
	\begin{equation*}
		K = 1 + N (d + 2\alpha_*) \vee \frac{d}{1 - \alpha}.
	\end{equation*}
For such $K$ we have $K/N > d + 2\alpha_*$ and $(1 - \alpha) K > d$ and this yields
	\begin{align}\label{al:bound_for_I1}
		I_1 
		 \le c_3 \sum_{n = 0}^{\infty} c_K C_{13}^{-K} \OBL{\frac{2^n r}{r_t}}^d \OBL{\frac{2^{n\alpha} r^{1 - 1/N} r_t^{1/N}}{2^n r}}^K
		 \le c_6 \OBL{\frac{r_t}{r}}^{2\alpha_* - (2\alpha_* + d)/N}. 
	\end{align}
	Using the definition of $\theta$, \eqref{eq:bound_for_I2}, \eqref{al:bound_for_I1} and setting $c_1 = c_5 + c_6$ we conclude  \eqref{eq:key_ineq_in_UHK_for_Y}. 
\end{proof}

\begin{LM}\label{lm:pom_result_for_UHK_for_Y}
	Assume that condition \eqref{eq:key_ineq_in_UHK_for_Y} holds with some $\theta>0$. Then there exists a constant $c_2 > 0$ such that for any ball $B(x_0, r)$ and for any $t \ge 1$
	\begin{equation*}
		\bbP^x(\tau^Y(x_0, r) \le t) \le c_2 r^{-\theta}\OBL{\phi^{-1}(t^{-1})}^{-\theta /2}, \quad  x \in B(x_0, r/4).
	\end{equation*}
\end{LM}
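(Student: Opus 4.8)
The plan is to derive the estimate from the tail bound \eqref{eq:key_ineq_in_UHK_for_Y} via the classical strong Markov splitting used in \cite{BL02} and \cite{CKW16}. Fix a ball $B(x_0,r)$, a time $t\ge 1$ and a point $x\in B(x_0,r/4)$, and abbreviate $\tau=\tau^Y(x_0,r)$. The first step is to decompose according to the position of the process at the later time $2t$:
\begin{equation*}
	\bbP^x(\tau\le t)\le \bbP^x\OBL{Y_{2t}\notin B(x_0,r/2)}+\bbP^x\OBL{\tau\le t,\ Y_{2t}\in B(x_0,r/2)}.
\end{equation*}

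For the first term, since $\aps{x-x_0}<r/4$ one has $\{Y_{2t}\notin B(x_0,r/2)\}\subseteq\{Y_{2t}\notin B(x,r/4)\}$, so applying \eqref{eq:key_ineq_in_UHK_for_Y} at the time $2t\ge1$ and the radius $r/4$ yields
\begin{equation*}
	\bbP^x\OBL{Y_{2t}\notin B(x_0,r/2)}\le\sum_{y\in B(x,r/4)^c}q(2t,x,y)\le c_1\OBL{r/4}^{-\theta}\OBL{\phi^{-1}((2t)^{-1})}^{-\theta/2}.
\end{equation*}
For the second term I would condition on $\calF_\tau$ and use the strong Markov property. On $\{\tau\le t\}$ the residual time satisfies $2t-\tau\in[t,2t]\subseteq[1,\infty)$, while $Y_\tau\notin B(x_0,r)$, so that $B(x_0,r/2)\subseteq B(Y_\tau,r/2)^c$; hence, bounding $\bbP^{Y_\tau}(Y_{2t-\tau}\in B(x_0,r/2))$ by $\sum_{y\in B(Y_\tau,r/2)^c}q(2t-\tau,Y_\tau,y)$, applying \eqref{eq:key_ineq_in_UHK_for_Y} with radius $r/2$, and using that $\phi^{-1}$ is increasing together with $2t-\tau\le 2t$, one gets
\begin{equation*}
	\bbP^x\OBL{\tau\le t,\ Y_{2t}\in B(x_0,r/2)}=\bbE^x\UGL{\bbjedan_{\{\tau\le t\}}\,\bbP^{Y_\tau}\OBL{Y_{2t-\tau}\in B(x_0,r/2)}}\le c_1\OBL{r/2}^{-\theta}\OBL{\phi^{-1}((2t)^{-1})}^{-\theta/2}.
\end{equation*}

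It remains to replace $\phi^{-1}((2t)^{-1})$ by $\phi^{-1}(t^{-1})$: since $0<(2t)^{-1}\le t^{-1}\le 1$, the upper scaling bound in \eqref{eq:scaling_for_inverse} gives $\phi^{-1}(t^{-1})\le(2/c_*)^{1/\alpha_*}\phi^{-1}((2t)^{-1})$, whence $\OBL{\phi^{-1}((2t)^{-1})}^{-\theta/2}\le(2/c_*)^{\theta/(2\alpha_*)}\OBL{\phi^{-1}(t^{-1})}^{-\theta/2}$. Adding the two bounds and collecting all constants into a single $c_2>0$ gives the asserted inequality. The only place that needs care is the choice of time scale: one works at $2t$ instead of $t$ precisely so that the residual time $2t-\tau$ after the exit from $B(x_0,r)$ stays $\ge1$, which is exactly what makes \eqref{eq:key_ineq_in_UHK_for_Y} applicable at that instant; the comparison of $\phi^{-1}((2t)^{-1})$ with $\phi^{-1}(t^{-1})$ is then an immediate consequence of the scaling condition, and the rest is routine.
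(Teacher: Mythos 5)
Your proposal is correct and follows essentially the same strategy as the paper's proof: split $\bbP^x(\tau\le t)$ by the position of $Y$ at the doubled time $2t$, apply the tail bound \eqref{eq:key_ineq_in_UHK_for_Y} separately to the piece where $Y_{2t}$ is far from the center and to the strong-Markov piece after exit (noting the residual time $2t-\tau\ge t\ge1$), and then use the lower scaling bound \eqref{eq:scaling_for_inverse} to trade $\phi^{-1}((2t)^{-1})$ for $\phi^{-1}(t^{-1})$. The only cosmetic difference is that the paper first passes from $\tau^Y(x_0,r)$ to $\tau^Y(x,3r/4)$ and works with balls centered at $x$, so the radii $r/2$ and $r/4$ appear in the opposite order from yours, but the argument is the same.
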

\begin{proof}
	For $x \in B(x_0, r/4)$, we have $B(x, 3r/4) \subseteq B(x_0, r)$. Using \eqref{eq:key_ineq_in_UHK_for_Y} we get 
	\begin{align}\label{al:pom_UHK_key_part1}
		\bbP^x(\tau^Y(x_0, r) \le t)
		& \le \bbP^x(\tau^Y(x, 3r/4) \le t) 	\nonumber \\
		& \le \bbP^x \OBL{Y_{2t} \in B(x, r/2)^c} + \sup_{\substack{z \in B(x, 3r/4)^c \\ s \le t}} \bbP^z\OBL{Y_{2t - s} \in B(x, r/2)} \nonumber \\
		& \le  \sum_{y \in B(x, r/2)^c} q(2t, x, y) + \sup_{\substack{z \in B(x, 3r/4)^c \\ s \le t}} \sum_{y \in B(z, r/4)^c} q(2t - s, z, y) \nonumber \\
		& \le c_1 \OBL{\frac{r_{2t}}{r/2}}^{\theta} + c_1 \sup_{s \le t} \OBL{\frac{r_{2t - s}}{r/4}}^{\theta}.
	\end{align}
	Since $t \ge 1$, we can use \eqref{eq:scaling_for_inverse} to obtain
	\begin{align*}
r_{2t} \le \OBL{\frac{2}{c_*}}^{1/2\alpha_*} r_t.
	\end{align*}
Since $s \le t$, we have
	\begin{equation*}
\sup_{s \le t} r_{2t - s} \le \OBL{\frac{2}{c_*}}^{1/2\alpha_*} r_t.
	\end{equation*}
With these estimates used in \eqref{al:pom_UHK_key_part1} we get
	\begin{align*}
		\bbP^x(\tau^Y(x_0, r) \le t)
		 \le c_1 2^\theta \Big(\frac{2}{c_*}\Big)^{\theta /2\alpha_*} \Big(\frac{r_t}{r}\Big)^{\theta} + c_1 4^\theta \Big(\frac{2}{c_*}\Big)^{\theta /2\alpha_*} \Big(\frac{r_t}{r}\Big)^{\theta} 
		 = c_2 \OBL{\frac{r_t}{r}}^{\theta},
	\end{align*}
for all $x \in B(x_0, r/4)$.
\end{proof}	
	
\begin{LM}
Assume that condition \eqref{eq:key_ineq_in_UHK_for_Y} holds with
$0<\theta = 2\alpha_* - (2\alpha_* + d)/N$. Then
for all $t \ge 1$, $k \ge 1$ and $\aps{x_0 - y_0} > 4k\rho$ it holds
	\begin{equation}\label{eq:bound_with_improvement}
		q^{(\rho)}(t, x_0, y_0) \le c(k) \OBL{\phi^{-1}(t^{-1})}^{d/2} \exp \OBL{c_0 t \phi(\rho^{-2})} \Big(1 + \frac{\rho}{r_t}\Big)^{-(k - 1)\theta}.
	\end{equation}
\end{LM}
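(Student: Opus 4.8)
\emph{Approach.} The plan is to prove \eqref{eq:bound_with_improvement} by induction on $k$, running the recursion on the truncated process $Y^{(\rho)}$. The base case $k=1$ is just the a priori estimate \eqref{eq:bound_for_q^rho(t,x,y)-1st_part} (with $c(1)=C_2$), since then the last factor equals $1$. For the inductive step I would first isolate the trivial regime $\rho\le r_t$: there $(1+\rho/r_t)^{-k\theta}\ge 2^{-k\theta}$, so \eqref{eq:bound_for_q^rho(t,x,y)-1st_part} already gives the bound for $k+1$ after inflating the constant by $2^{k\theta}$. Hence it remains to treat $\rho>r_t$, equivalently $t\,\phi(\rho^{-2})=\phi(\rho^{-2})/\phi(r_t^{-2})\le 1$; this normalization is what keeps every factor $e^{c\,t\phi(\rho^{-2})}$ a harmless constant and stops the exponential term from compounding through the $k$ iterations.

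\emph{The recursion.} The engine of the induction is the strong Markov property of $Y^{(\rho)}$ at $\tau:=\tau^{(\rho)}(x_0,3\rho)$. If $\aps{x_0-y_0}>4(k+1)\rho$ then $y_0\notin B(x_0,3\rho)$, so $\{Y^{(\rho)}_t=y_0\}\subseteq\{\tau\le t\}$ and
\begin{equation*}
	q^{(\rho)}(t,x_0,y_0)=\bbE^{x_0}\big[\,q^{(\rho)}(t-\tau,Y^{(\rho)}_\tau,y_0)\,\bbjedan_{\{\tau\le t\}}\big].
\end{equation*}
The radius $3\rho$ is chosen so that the exit jump (of size at most $\rho$) leaves $Y^{(\rho)}_\tau$ in $B(x_0,4\rho)$, whence $\aps{Y^{(\rho)}_\tau-y_0}>4k\rho$ and the inductive hypothesis is available at the post-exit configuration. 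The extra factor $(1+\rho/r_t)^{-\theta}$ that upgrades $(k-1)\theta$ to $k\theta$ comes from the exit probability: for $\rho>r_t$ one has
\begin{equation*}
	\bbP^{x_0}\big(\tau^{(\rho)}(x_0,3\rho)\le t\big)\le c\,(r_t/\rho)^{\theta}\le c'\,(1+\rho/r_t)^{-\theta},
\end{equation*}
which is proved exactly as Lemma \ref{lm:pom_result_for_UHK_for_Y} but for $Y^{(\rho)}$ in place of $Y$: one uses $q^{(\rho)}(s,\cdot,\cdot)\le q(s,\cdot,\cdot)\,e^{c_0 s\phi(\rho^{-2})}$ for $s\le 2t$ to pass to the full-process tail bound \eqref{eq:key_ineq_in_UHK_for_Y}, and absorbs the exponential because $t\phi(\rho^{-2})\le 1$. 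This is the only place where hypothesis \eqref{eq:key_ineq_in_UHK_for_Y} enters.

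\emph{Splitting on the remaining time.} It remains to bound $q^{(\rho)}(t-\tau,Y^{(\rho)}_\tau,y_0)$ on $\{\tau\le t\}$ and take the expectation, which I would do by splitting according to the size of $t-\tau$. On $\{t-\tau\ge t/2\}$ one inserts the inductive bound for $q^{(\rho)}(t-\tau,Y^{(\rho)}_\tau,y_0)$ and uses \eqref{eq:scaling_for_inverse} to replace $\phi^{-1}((t-\tau)^{-1})$ by $\phi^{-1}(t^{-1})$ and $r_{t-\tau}$ by $r_t$, up to constants; the resulting supremum, multiplied by $\bbP^{x_0}(\tau\le t/2)\le c(r_t/\rho)^\theta$, already has the required shape $c(k+1)\OBL{\phi^{-1}(t^{-1})}^{d/2}e^{c_0t\phi(\rho^{-2})}(1+\rho/r_t)^{-k\theta}$. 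On the complementary set the exit is late, $t-\tau<t/2$, and the inductive bound is useless since it blows up as $t-\tau\to 0$; there one uses instead the exponential spatial decay of Lemma \ref{lm:UB_pom8}, $q^{(\rho)}(s,z,y_0)\le C_{11}\OBL{\phi^{-1}(s^{-1})}^{d/2}e^{C_{12}s\phi(\rho^{-2})-C_{13}\aps{z-y_0}/\rho}$ for $s\ge 1$ (and, for $0<s<1$ with $\aps{z-y_0}>4\rho$, the elementary single–large–jump bound $q^{(\rho)}(s,z,y_0)\le c\,s\,\rho^{-d}\phi(\rho^{-2})$), together with the fact that for $\rho>r_t$ a late exit is rare, $\bbP^{x_0}(\tau\le t)\le c(r_t/\rho)^\theta$; a scaling computation then shows that this late-exit contribution is also $\lesssim\OBL{\phi^{-1}(t^{-1})}^{d/2}(r_t/\rho)^{\theta}$.

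\emph{Main obstacle.} I expect the late-exit part to be the delicate point: there the natural bounds only produce a factor comparable to $\OBL{\phi^{-1}(1)}^{d/2}=1$ (or $\rho^{-d}\phi(\rho^{-2})$), so recovering the correct normalization $\OBL{\phi^{-1}(t^{-1})}^{d/2}$ forces a careful bookkeeping of the scaling exponents, and it is precisely here that $\theta<2\alpha_*\le 2\alpha^*$, the relation $t\phi(\rho^{-2})\le 1$ and the restriction $t\ge 1$ are all used. Once the two contributions are added, one obtains \eqref{eq:bound_with_improvement} for $k+1$ with a suitable $c(k+1)$, and the induction closes.
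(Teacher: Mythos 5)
Your strategy is a genuine departure from the paper's and, as written, does not close. The paper does not run an induction on the heat kernel at all: it invokes \cite[Lemma 7.11]{CKW16}, which is an iterated \emph{exit-probability} estimate, to get
\begin{equation*}
Q_t^{(\rho)} \bbjedan_{B(x_0, r)^c}(x) \le \Big\{c\Big[\Big(\frac{r/k - \rho}{r_t}\Big)^{-\theta} + t \phi(\rho^{-2})\Big]\Big\}^{k - 1},\qquad x\in B(x_0,r/k),
\end{equation*}
and only at the very end converts this into a kernel bound by a single semigroup split $q^{(\rho)}(2t, x_0, y_0)=\sum_z q^{(\rho)}(t,x_0,z)q^{(\rho)}(t,z,y_0)$, partitioned over $z\in B(x_0,r)^c$ and $z\in B(y_0,r)^c$ with $r=\aps{x_0-y_0}/2$. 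Because the quantity being iterated is a probability (hence automatically $\le 1$), there is no analogue of your ``late exit'' obstruction, and both time slots in the semigroup step are fixed at $t$; the hypothesis of that cited lemma is exactly the exit-probability bound \eqref{eq:bound_for_1-Q} obtained from Lemma \ref{lm:pom_result_for_UHK_for_Y}.

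The gap in your version is precisely where you flag discomfort, and your own estimate shows it is fatal. On $\{t-\tau<t/2\}$ you only produce a contribution $\lesssim \OBL{\phi^{-1}(t^{-1})}^{d/2}(r_t/\rho)^{\theta}$, i.e.\ a single power of $\theta$, while the target in the $k\to k+1$ step is $\OBL{\phi^{-1}(t^{-1})}^{d/2}(1+\rho/r_t)^{-k\theta}$. For $\rho\gg r_t$ and $k\ge 2$ the late-exit term therefore dominates the sum of the two contributions, so the induction never advances beyond exponent $\theta$. The mechanism behind this: after applying the strong Markov property at $\tau$, the inductive hypothesis or Lemma \ref{lm:UB_pom8} carries the prefactor $\OBL{\phi^{-1}((t-\tau)^{-1})}^{d/2}$, which blows up relative to $\OBL{\phi^{-1}(t^{-1})}^{d/2}$ when $t-\tau$ is of order $1$, and the spatial decay $e^{-C_{13}\aps{z-y_0}/\rho}$ cannot compensate because the post-exit distance to $y_0$ is only $\asymp k\rho$, yielding a fixed constant $e^{-cC_{13}k}$ rather than a quantity tending to $0$ with $\rho/r_t$. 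To make the recursion work you must iterate an exit probability, not the heat kernel itself; that is precisely what \cite[Lemma 7.11]{CKW16} supplies, and what the paper imports (noting only that its assumption is verified here for $t\ge 1$).
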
	
\begin{proof}
As observed in the proof of Lemma \ref{lm:UB_pom2}, for all $t > 0$,
	\begin{equation*}
		Q_t^B \bbjedan_B(x) \le Q_t^{(\rho), B} \bbjedan_B(x) + c_1 t \phi(\rho^{-2})
	\end{equation*}
	and 
	\begin{equation*}
 \bbP^x(\tau^Y(x_0, r) \le t) = 1 - Q_t^B \bbjedan_B(x).
	\end{equation*}
This and Lemma \ref{lm:pom_result_for_UHK_for_Y} imply
	\begin{equation*}
		1 - Q_t^{(\rho), B} \bbjedan_B(x) - c_1 t \phi(\rho^{-2}) \le 1 - Q_t^B \bbjedan_B(x) \le c_2 \OBL{\frac{r}{r_t}}^{-\theta}.
	\end{equation*}
	Hence
	\begin{equation}\label{eq:bound_for_1-Q}
		1 - Q_t^{(\rho), B} \bbjedan_B(x) \le c_3 \Big[\OBL{\frac{r}{r_t}}^{-\theta} + t \phi(\rho^{-2})\Big], \quad x \in B(x_0, r/4).
	\end{equation}
We now proceed to prove \eqref{eq:bound_with_improvement}.
If $\rho < r_t$ then clearly
	\begin{equation*}
\Big(1 + \frac{\rho}{r_t}\Big)^{(k - 1)\theta} < 2^{(k - 1)\theta}.
	\end{equation*}
and, by \eqref{eq:bound_for_q^rho(t,x,y)-1st_part},
	\begin{align*}
		q^{(\rho)}(t, x_0, y_0)
		& \le C_2 2^{(k - 1)\theta} \OBL{\phi^{-1}(t^{-1})}^{d/2} \exp (c_0 t \phi(\rho^{-2})) \Big(1 + \frac{\rho}{r_t}\Big)^{-(k - 1)\theta} ,
	\end{align*}
as claimed. 

	Let us now consider the case $\rho \ge r_t$. Fix $k\ge 1$, $t \ge 1$ and $x_0, y_0 \in \bbZ^d$ such that $\aps{x_0 - y_0} > 4k\rho$. Set $r = \aps{x_0 - y_0}/2 > 2k\rho$ and
	\begin{equation}\label{eq:def_of_psi}
		\psi(r, t) = c_3 \Big[\OBL{\frac{r}{r_t}}^{-\theta} + t \phi(\rho^{-2})\Big].
	\end{equation}
Notice that $\psi(r, t)$ is non-decreasing in $t$.
	We take $R = r/k > 2\rho $ and apply \cite[Lemma 7.11]{CKW16} to get
	\begin{equation*}
		Q_t^{(\rho)} \bbjedan_{B(x_0, r)^c}(x) \le 
		\Big\{c_4 \Big[ \OBL{\frac{r/k - \rho}{r_t}}^{-\theta} + t \phi(\rho^{-2})\Big]\Big\}^{k - 1}, \quad x \in B(x_0, R).
	\end{equation*}

\noindent \textit{Remark}. In our case the assumption of \cite[Lemma 7.11]{CKW16} is valid only for $t \ge 1$. Since the lemma is proven by induction, we could repeat the argument and get the same result.

\noindent Notice that
	\begin{equation*}
 \OBL{\frac{r}{k} - \rho}^{-\theta} < \rho^{-\theta}.
	\end{equation*}
	Using this and the fact that $R > \rho$, we obtain
	\begin{equation}\label{eq:bound_for_Q-part1}
		Q_t^{(\rho)} \bbjedan_{B(x_0, r)^c}(x) \le c_1(k) \Big\{\big(\frac{\rho}{r_t}\big)^{-\theta} + t \phi(\rho^{-2})\Big\}^{k - 1}, \quad  x \in B(x_0, \rho).
	\end{equation}
We notice that
	\begin{equation*}
		t\phi(\rho^{-2}) \le \frac{1}{c_*} \OBL{\frac{\rho}{r_t}}^{-\theta}, \quad  \rho  \ge r_t.
	\end{equation*}
This follows easily by \eqref{eq:scaling}.
	 Combining this with \eqref{eq:bound_for_Q-part1} we get
	\begin{equation}\label{eq:bound_for_Q-part2}
		Q_t^{(\rho)} \bbjedan_{B(x_0, r)^c}(x) \le c_2(k) \Big(\frac{\rho}{r_t}\Big)^{-(k - 1)\theta}, \quad x \in B(x_0, \rho).
	\end{equation}
Moreover, since $\rho \ge r_t$, we have
	\begin{align*}
\Big(\frac{\rho}{r_t}\Big)^{-(k - 1)\theta} \le 2^{(k - 1)\theta} \Big(1 + \frac{\rho}{r_t}\Big)^{-(k - 1)\theta}.
	\end{align*}
	Hence, by \eqref{eq:bound_for_Q-part2},
	\begin{equation}\label{eq:bound_for_Q-part3}
		Q_t^{(\rho)} \bbjedan_{B(x_0, r)^c}(x_0) \le c_3(k) \Big(1 + \frac{\rho}{r_t}\Big)^{-(k - 1)\theta}.
	\end{equation}
Further, observe that
	\begin{equation*}
		Q_t^{(\rho)} \bbjedan_{B(x_0, r)^c}(x_0) = \bbP^{x_0}(Y_t^{(\rho)} \in B(x_0, r)^c) = \sum_{z \in B(x_0, r)^c} q^{(\rho)}(t, x_0, z)
	\end{equation*}
and, by the semigroup property,
	\begin{align*}
		q^{(\rho)}(2t, x_0, y_0)
		& = \sum_{z \in \bbZ^d} q^{(\rho)}(t, x_0, z) q^{(\rho)}(t, z, y_0) \\
		& \le \sum_{z \in B(x_0, r)^c} q^{(\rho)}(t, x_0, z) q^{(\rho)}(t, z, y_0) + \sum_{z \in B(y_0, r)^c} q^{(\rho)}(t, x_0, z) q^{(\rho)}(t, z, y_0).
	\end{align*}
Using \eqref{eq:bound_for_q^rho(t,x,y)-1st_part} and \eqref{eq:bound_for_Q-part3} we obtain
	\begin{align*}
		\sum_{z \in B(x_0, r)^c} q^{(\rho)}(t, x_0, z) q^{(\rho)}(t, z, y_0)
		& \le C_2 \OBL{\phi^{-1}(t^{-1})}^{d/2} \exp (c_0 t \phi(\rho^{-2})) Q_t^{(\rho)} \bbjedan_{B(x_0, r)^c}(x_0) \\
		& \le c_4(k)\OBL{\phi^{-1}(t^{-1})}^{d/2} \exp (c_0 t \phi(\rho^{-2})) \Big(1 + \frac{\rho}{r_t}\Big)^{-(k - 1)\theta}.
	\end{align*}
Similarly, we show that
	\begin{equation*}
		\sum_{z \in B(y_0, r)^c} q^{(\rho)}(t, x_0, z) q^{(\rho)}(t, z, y_0) \le c_4(k)\OBL{\phi^{-1}(t^{-1})}^{d/2} \exp (c_0 t \phi(\rho^{-2})) \Big(1 + \frac{\rho}{r_t}\Big)^{-(k - 1)\theta}.
	\end{equation*}
This yields
	\begin{equation*}
		q^{(\rho)}(2t, x_0, y_0) \le c_5(k)\OBL{\phi^{-1}(t^{-1})}^{d/2} \exp (c_0 t \phi(\rho^{-2})) \Big(1 + \frac{\rho}{r_t}\Big)^{-(k - 1)\theta}.
	\end{equation*}
	As in the proof of Lemma \ref{lm:UB_pom8}, we can replace $2t$ with $t$ and the proof is finished.
\end{proof}

We now finally prove the upper bound for the heat kernel of the process $Y_t$.
\begin{proof}[Proof of Proposition \ref{prop:UHK_for_Y}]	
Our aim is to prove that for all $t\ge 1$ 
	\begin{equation}\label{eq:UHK_for_x_neq_y}
		q(t, x, y) \le c_1 t \aps{x - y}^{-d} \phi(\aps{x - y}^{-2}),\quad x\neq y.
	\end{equation}
We take arbitrary $x_0, y_0 \in \bbZ^d$ such that $x_0 \neq y_0$ and we set $r := \aps{x_0 - y_0}/2$. Assume that $r < r_t$. We show that in this case the on-diagonal bound from Lemma \ref{lm:UB_pom7} is smaller than the bound in \eqref{eq:UHK_for_x_neq_y}, that is
	\begin{equation}\label{eq:pom_for_UHK_for_x_neq_y_r<r_t}
		\OBL{\phi^{-1}(t^{-1})}^{d/2} \le c_2 t r^{-d} \phi(r^{-2}).
	\end{equation}
Indeed, since $1/2 \le r < r_t$, we can use Lemma \ref{lm:scaling_for_R_ge_1} (with $L = 4$) to obtain
	\begin{align*}
		\frac{\OBL{\phi^{-1}(t^{-1})}^{d/2}}{t r^{-d} \phi(r^{-2})}
		\le \frac{4^{\alpha_*}}{c_*} \OBL{\frac{r_t}{r}}^{-2\alpha_*} \OBL{\frac{r_t}{r}}^{-d} 
		\le \frac{4^{\alpha_*}}{c_*}.
	\end{align*}
Combining \eqref{eq:pom_for_UHK_for_x_neq_y_r<r_t} with Lemma \ref{lm:UB_pom7} and using \eqref{eq:phi(v)/phi(u)_le_v/u} we get
	\begin{align}\label{al:UHK_for_r<r_t}
		q(t, x_0, y_0)
		 \le C_2 c_2 2^d t \aps{x_0 - y_0}^{-d} \phi(4\aps{x_0 - y_0}^{-2}) 
		\le c_3 t \aps{x_0 - y_0}^{-d} \phi(\aps{x_0 - y_0}^{-2}).
	\end{align}

We next consider the case $r \ge r_t$. We set $k = 1 + (d + 2\alpha^*)/\theta$ and $\rho = r/(8k)$. 
By \eqref{eq:estimate_from_Meyer}, Proposition \ref{prop:UB_pom6} and \eqref{eq:bound_with_improvement},
	\begin{align*}
q(t, x_0, y_0) \le c(k) \OBL{\phi^{-1}(t^{-1})}^{d/2} \exp \OBL{c_0 t \phi(\rho^{-2})} \Big(1 + \frac{\rho}{r_t}\Big)^{-(k - 1)\theta} \!\!\!\!
+ C_{10}t \rho^{-d} \phi(\rho^{-2}).
\end{align*}
We observe that $t \phi(\rho^{-2})$ is bounded. This follows as $r\ge r_t$ implies $t\phi(r^{-2}) \le 1$, and we use $\rho = r/(8k)$ with \eqref{eq:phi(v)/phi(u)_le_v/u} to get
	\begin{equation*}
		t\phi(\rho^{-2}) =  t\phi(64k^2 r^{-2}) \le 64k^2 t\phi(r^{-2}) \le 64k^2.
	\end{equation*}
Hence
	\begin{align}\label{al:UHK_for_r_ge_r_t-part1}
		q(t, x_0, y_0)
		& \le c(k) \OBL{\phi^{-1}(t^{-1})}^{d/2} \exp(c_0 64k^2) \Big(1 + \frac{\rho}{r_t}\Big)^{-(k - 1)\theta} + C_{10} t \rho^{-d} \phi(\rho^{-2}) \nonumber \\
		& \le c_6(k) \OBL{\phi^{-1}(t^{-1})}^{d/2} \Big(1 + \frac{\rho}{r_t}\Big)^{-(k - 1)\theta} + C_{10} t \rho^{-d} \phi(\rho^{-2}).
	\end{align}
Since $\rho = r/(8k)$ and $r_t/r > 0$, we get	
\begin{equation*}
 \Big(1 + \frac{\rho}{r_t}\Big)^{-(k - 1)\theta} \le c_7(k) \Big(\frac{r}{r_t}\Big)^{-(k - 1)\theta},
	\end{equation*}
 and, by \eqref{eq:phi(v)/phi(u)_le_v/u},
	\begin{equation*}
		\rho^{-d} \phi(\rho^{-2}) = (r/(8k))^{-d} \phi \big((r/(8k))^{-2}\big) \le (8k)^{d + 2} r^{-d} \phi(r^{-2}).
	\end{equation*}
These inequalities together with \eqref{al:UHK_for_r_ge_r_t-part1} yield	\begin{align}\label{al:UHK_for_r_ge_r_t-part2}
		q(t, x_0, y_0)
		& \le c_8(k) \OBL{\phi^{-1}(t^{-1})}^{d/2} \Big(\frac{r}{r_t}\Big)^{- (k - 1)\theta} \!\!\!\! +\  c_8(k) t r^{-d} \phi(r^{-2}) \nonumber \\
		& = c_8(k) t r^{-d} \phi(r^{-2}) \Big[\frac{t^{-1}}{\phi(r^{-2})} \Big(\frac{r}{r_t}\Big)^{-2\alpha^*} + 1\Big].
	\end{align}
By $r^{-2} \le r_t^{-2} \le 1$ and \eqref{eq:scaling}, we get
	\begin{equation*}
 \frac{t^{-1}}{\phi(r^{-2})} \Big(\frac{r}{r_t}\Big)^{-2\alpha^*} \le c^*.
\end{equation*}
Thus, \eqref{al:UHK_for_r_ge_r_t-part2} implies
	\begin{align}\label{al:UHK_for_r_ge_r_t}
		q(t, x_0, y_0)
		 \le c_9(k) 2^{d + 2} t \aps{x_0 - y_0}^{-d} \phi(\aps{x_0 - y_0}^{-2}).
	\end{align}
Finally, \eqref{al:UHK_for_r<r_t} and \eqref{al:UHK_for_r_ge_r_t} yield relation \eqref{eq:UHK_for_x_neq_y} for all $t\ge 1$ and $x \neq y$. Keeping in mind  Lemma \ref{lm:UB_pom7} we conclude the result.
\end{proof}

\subsection{Full upper estimate}
In this paragraph we establish the upper bound for the transition probability of the random walk $S^\phi_n$. We follow approach of \cite{BL02}, cf. also \cite{MSC15}, which is based on the application of the hitting time estimates. We start with results for the process $Y$ and then we exploit them to obtain bounds for $S^\phi_n$. Recall that
$\tau^Y(x, r) = \inf\{t \ge 0 : Y_t \notin B(x, r)\}$.

\begin{PROP}\label{prop:MSC-1}
	There exists a constant $C_{14} > 0$ such that
	\begin{equation*}
		\bbP^x(\tau^Y(x, r) \le t) \le C_{14} t \phi(r^{-2}),
	\end{equation*}
	for all $x \in \bbZ^d$, $r > 0$ and $t\ge 1$.	
\end{PROP}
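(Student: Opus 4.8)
The plan is to reduce the statement to the off-diagonal part of the heat kernel bound for $Y$ established in Proposition~\ref{prop:UHK_for_Y}, which in the notation~\eqref{j_function} reads $q(t,x,y)\le c\,t\,j(\aps{x-y})$ for $x\neq y$ and $t\ge1$, together with the summability estimate of Lemma~\ref{lm:Lemma21_from_CKW}. Note that Lemma~\ref{lm:UB_pom1} alone is too weak here: its right-hand side contains a term bounded away from zero uniformly in $t\phi(r^{-2})$, whereas we need an estimate that is genuinely linear in $t\phi(r^{-2})$.

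First I would dispose of the range $0<r<2$, where $\phi(r^{-2})\ge\phi(1/4)>0$; since $t\ge1$, the left-hand side (which is at most $1$) is then $\le\phi(1/4)^{-1}\,t\,\phi(r^{-2})$ and the claim holds. It remains to treat $r\ge2$, for which $r^{-2}\le1/4$ and $(r/2)^{-2}=4r^{-2}\le1$, so that scaling~\eqref{eq:scaling} applies to both arguments and gives $\phi((r/2)^{-2})\le c^*4^{\alpha^*}\phi(r^{-2})$. For such $r$ and $t\ge1$ I would put $\tau=\tau^Y(x,r)$ and split, using the strong Markov property at $\tau$,
\[
\bbP^x(\tau\le t)\le\bbP^x\big(Y_{2t}\notin B(x,r/2)\big)+\bbE^x\big[\bbjedan_{\{\tau\le t\}}\,\bbP^{Y_\tau}\big(Y_{2t-\tau}\in B(x,r/2)\big)\big].
\]
For the first term, Proposition~\ref{prop:UHK_for_Y} (applicable since $2t\ge1$) and Lemma~\ref{lm:Lemma21_from_CKW} give
\[
\bbP^x\big(Y_{2t}\notin B(x,r/2)\big)=\sum_{\aps{y-x}\ge r/2}q(2t,x,y)\le 2ct\sum_{\aps{y-x}\ge r/2}j(\aps{x-y})\le c_1 t\,\phi((r/2)^{-2})\le c_2 t\,\phi(r^{-2}).
\]
For the second term, on $\{\tau\le t\}$ one has $\aps{Y_\tau-x}\ge r$ and $2t-\tau\in[t,2t]\subset[1,\infty)$; moreover for any $z$ with $\aps{z-x}\ge r$ one has $B(x,r/2)\subseteq B(z,r/2)^c$, so for every $u\in[t,2t]$
\[
\bbP^z\big(Y_u\in B(x,r/2)\big)=\sum_{y\in B(x,r/2)}q(u,z,y)\le 2ct\sum_{y\in B(z,r/2)^c}j(\aps{z-y})\le c_3 t\,\phi((r/2)^{-2})\le c_4 t\,\phi(r^{-2}),
\]
again by Proposition~\ref{prop:UHK_for_Y} and Lemma~\ref{lm:Lemma21_from_CKW}. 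Since this bound is uniform in $z$ and in $u\in[t,2t]$, it also bounds $\bbE^x[\bbjedan_{\{\tau\le t\}}\bbP^{Y_\tau}(\cdots)]$, and adding the two contributions yields $\bbP^x(\tau^Y(x,r)\le t)\le C_{14}\,t\,\phi(r^{-2})$ for $r\ge2$; combined with the case $r<2$ this proves the proposition.

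The only point requiring care is the restriction $t\ge1$ in Proposition~\ref{prop:UHK_for_Y}: the heat kernel must never be evaluated at a time smaller than $1$, which is exactly why the decomposition is carried out at time $2t$ rather than at time $t$, since then $2t-\tau\ge t\ge1$ on $\{\tau\le t\}$. Everything else is routine: the scaling condition converts $\phi((r/2)^{-2})$ into $\phi(r^{-2})$ up to a multiplicative constant, and Lemma~\ref{lm:Lemma21_from_CKW} handles both sums.
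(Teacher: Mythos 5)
Your proof is correct and follows essentially the same route as the paper: establish $\bbP^x(\aps{Y_t-x}\ge r)\le c\,t\,\phi(r^{-2})$ from Proposition~\ref{prop:UHK_for_Y} and Lemma~\ref{lm:Lemma21_from_CKW}, then split $\bbP^x(\tau\le t)$ at time $2t$, apply the strong Markov property at $\tau$, and use the triangle inequality to bound the exit probability by two terms each controlled by $t\phi(r^{-2})$. The only cosmetic difference is your case split at $r<2$, which is unnecessary: the paper passes from $\phi((r/2)^{-2})$ to $\phi(r^{-2})$ via the elementary Bernstein-function inequality $\eqref{eq:phi(v)/phi(u)_le_v/u}$, which holds for all $r>0$, so there is no need to invoke the scaling condition $\eqref{eq:scaling}$ and hence no need to restrict the arguments to $(0,1]$.
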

\begin{proof}
By Proposition \ref{prop:UHK_for_Y} and Lemma \ref{lm:Lemma21_from_CKW}, we get
	\begin{align*}
		\bbP^x(\aps{Y_t - x} \ge r)
		 \le c_1 t \sum_{y \in B(x, r)^c} \aps{x - y}^{-d} \phi(\aps{x - y}^{-2})  \le c_2 t \phi(r^{-2}),
	\end{align*}
	for all $x \in \bbZ^d$, $r > 0$ and $t \ge 1$.
	 For simplicity we write $\tau = \tau^Y(x, r)$. Thus, by \eqref{eq:phi(v)/phi(u)_le_v/u},
	\begin{align*}
		\bbP^x(\tau \le t)
		& = \bbP^x(\tau \le t, \aps{Y_{2t} - x} \le r/2) + \bbP^x(\tau \le t, \aps{Y_{2t} - x} > r/2) \\
		& \le \bbP^x(\tau \le t, \aps{Y_{2t} - Y_{\tau}} \ge r/2) + \bbP^x(\aps{Y_{2t} - x} > r/2) \\
		& \le \bbE^x \UGL{\bbjedan_{\{\tau \le t\}} \bbP^{Y_{\tau}} (\aps{Y_{2t - \tau} - Y_0} \ge r/2)} + c_2 2t \phi((r/2)^{-2}) \\
		& \le \bbE^x \Big[\bbjedan_{\{\tau \le t\}} \sup_{y \in B(x, r)^c} \sup_{s \le t} \bbP^y(\aps{Y_{2t - s} - y} \ge r/2)\Big] + 2c_2t \phi(4r^{-2}) \\
		& \le 2c_2 t \phi(4r^{-2}) \bbE^x \UGL{\bbjedan_{\{\tau \le t\}}} + 2c_2t\phi(4r^{-2}) 
		\le C_{14} t \phi(r^{-2}),
	\end{align*}
as desired.
\end{proof}

We use the notation
\begin{align*}
	\mathcal{T}^Y(x, r)  = \inf\{t \ge 0 : Y_t \in B(x, r)\} \quad \mathrm{and}\quad
	\mathcal{T}^{S^{\phi}}(x, r)  = \inf\{k \in \bbN_0 : S^{\phi}_k \in B(x, r)\}
\end{align*}
and we recall that $r_t = \big(\phi^{-1}(t^{-1})\big)^{-1/2}$, for $t\geq 1$.

\begin{LM}\label{lm:MSC-2}
	There exists a constant $C_{15} > 0$ such that
	\begin{equation}\label{eq:MSC-2}
		\bbP^x(\mathcal{T}^Y(y, r_t) \le t) \le C_{15} t r_t^d j(\aps{x - y}),
	\end{equation}
	for all $x, y \in \bbZ^d$ and $t \ge 1$.
\end{LM}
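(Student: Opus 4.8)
The plan is to run the classical strong Markov argument at the hitting time $\sigma:=\mathcal{T}^Y(y,r_t)$, feeding in the exit-time estimate of Proposition~\ref{prop:MSC-1} and the upper heat kernel bound of Proposition~\ref{prop:UHK_for_Y}. If $x=y$ the assertion is vacuous, so assume $x\neq y$, whence $\aps{x-y}\ge 1$. I would fix once and for all a large integer $L\ge 2$ (depending only on $d$ and $\phi$) and treat separately the ranges $\aps{x-y}>2Lr_t$ and $1\le\aps{x-y}\le 2Lr_t$. In the second range the inequality is trivial: since $t\ge 1$ forces $r_t\ge 1$ and $\phi(r_t^{-2})=t^{-1}$, one has
\[
t\,r_t^d\,j(\aps{x-y})=\OBL{\frac{r_t}{\aps{x-y}}}^{d}\frac{\phi(\aps{x-y}^{-2})}{\phi(r_t^{-2})},
\]
and because $\aps{x-y}^{-2},r_t^{-2}\in(0,1]$ with $r_t/\aps{x-y}\ge(2L)^{-1}$, the scaling condition \eqref{eq:scaling} bounds the right-hand side below by a positive constant $c_L$; as the left-hand side of \eqref{eq:MSC-2} never exceeds $1$, any $C_{15}\ge c_L^{-1}$ works in this range.

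Next I would calibrate $L$. By Proposition~\ref{prop:MSC-1}, for every $z\in\bbZ^d$ and $t\ge 1$,
\[
\bbP^z\OBL{\tau^Y(z,(L-1)r_t)\le 2t}\le 2C_{14}\,t\,\phi\big(((L-1)r_t)^{-2}\big)\le \frac{2C_{14}}{c_*(L-1)^{2\alpha_*}},
\]
where the last step uses \eqref{eq:scaling} (both arguments lie in $(0,1]$) together with $\phi(r_t^{-2})=t^{-1}$; in particular the bound is independent of $t$. I would choose $L$ so large that this is at most $1/2$. Since $B(z,(L-1)r_t)\subseteq B(y,Lr_t)$ whenever $z\in B(y,r_t)$, it then follows that for all such $z$ and all $u\in(0,2t]$,
\[
\bbP^z\OBL{Y_u\in B(y,Lr_t)}\ge \bbP^z\OBL{\tau^Y(z,(L-1)r_t)>u}\ge \bbP^z\OBL{\tau^Y(z,(L-1)r_t)>2t}\ge \tfrac12.
\]

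On the event $\{\sigma\le t\}$ one has $Y_\sigma\in B(y,r_t)$ and $2t-\sigma\in[t,2t]$, so conditioning on $\calF_\sigma$ and applying the strong Markov property together with the previous display gives, for every $x\neq y$,
\[
\bbP^x\OBL{Y_{2t}\in B(y,Lr_t)}\ge\bbE^x\big[\bbjedan_{\{\sigma\le t\}}\bbP^{Y_\sigma}\OBL{Y_{2t-\sigma}\in B(y,Lr_t)}\big]\ge\tfrac12\,\bbP^x(\sigma\le t),
\]
hence $\bbP^x(\mathcal{T}^Y(y,r_t)\le t)\le 2\sum_{w\in B(y,Lr_t)}q(2t,x,w)$. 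It remains to bound this sum in the range $\aps{x-y}>2Lr_t$. There every $w\in B(y,Lr_t)$ satisfies $\aps{x-y}/2<\aps{x-w}<2\aps{x-y}$, so the off-diagonal part of Proposition~\ref{prop:UHK_for_Y} (used at time $2t\ge 1$), the monotonicity and near-doubling of $j$ — namely $j(\aps{x-w})\le j(\aps{x-y}/2)\le 2^{d+2}j(\aps{x-y})$, which follows from \eqref{eq:phi(v)/phi(u)_le_v/u} (cf.\ \cite[Lemma~2.4]{MS18}) — and the volume bound \eqref{eq:volume_growth} yield
\[
\sum_{w\in B(y,Lr_t)}q(2t,x,w)\le \aps{B(y,Lr_t)}\cdot c\,(2t)\,j(\aps{x-y})\le c'L^d\,t\,r_t^d\,j(\aps{x-y}),
\]
and \eqref{eq:MSC-2} follows with $C_{15}=(2c'L^d)\vee c_L^{-1}$.

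The only genuinely delicate point is the calibration of $L$: one must exploit that enlarging the radius by a factor $L$ decreases $\phi(r^{-2})$ by a factor at least of order $L^{2\alpha_*}$ (the lower bound in the scaling condition \eqref{eq:scaling}), which is exactly what makes the probability of leaving $B(z,(L-1)r_t)$ within a time window of length $2t$ strictly below $1$, uniformly in $z$ and in $t\ge 1$. Once $L$ is fixed, everything else is routine bookkeeping with the two input estimates.
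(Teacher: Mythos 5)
Your proof is correct and follows essentially the same strategy as the paper's: a trivial bound in the near-diagonal range $\aps{x-y}\lesssim r_t$, followed by a strong Markov argument at the hitting time combined with the exit-time bound of Proposition~\ref{prop:MSC-1} and the off-diagonal upper heat kernel estimate of Proposition~\ref{prop:UHK_for_Y}. The only cosmetic differences are that you work at the final time $2t$ (conditioning on $Y_{2t}$ landing in the enlarged ball) rather than the paper's $t$, and you phrase the calibration through an explicit $L$ rather than the paper's $c_1$; the underlying estimates, the use of near-doubling of $j$, and the volume count are identical.
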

\begin{proof}
We first show that there is $c_1 > 0$ such that
	\begin{equation}\label{eq:MSC-2-pom1}
		\bbP^z(\tau^Y(z, c_1 r_t) > t) \ge 1/2.
	\end{equation}
Indeed, we set
	\begin{equation*}
		c_1 = 1\vee \OBL{\frac{2C_{14}}{c_*}}^{1/2\alpha_*},
	\end{equation*}
where $C_{14}$ comes from Proposition \ref{prop:MSC-1}. Using Proposition \ref{prop:MSC-1} and \eqref{eq:scaling} we get
	\begin{align*}
		\bbP^z(\tau^Y(z, c_1 r_t) \le t)
		 \le C_{14} t \phi((c_1 r_t)^{-2})  
		\le \frac{C_{14}}{c_* c_1^{2\alpha_*}} \le \frac{1}{2}.
	\end{align*}

We now consider the case $\aps{x - y} \le 2(1 + c_1)r_t$. By monotonicity of $j(r)$ and relation \eqref{eq:phi(v)/phi(u)_le_v/u}, we get
	\begin{align*}
		t r_t^d j(\aps{x - y})
		 &\ge t r_t^d j(2(1 + c_1)r_t) \ge (2(1 + c_1))^{-(d + 2)}\\
		 &\geq (2(1 + c_1))^{-(d + 2)} \bbP^x(\mathcal{T}^Y(y, r_t) \le t).
	\end{align*}
	Therefore
	\begin{equation}\label{eq:MSC-2-key_small_dist}
		\bbP^x(\mathcal{T}^Y(y, r_t) \le t) \le C_{15}' t r_t^d j(\aps{x - y}),
	\end{equation}
with $C_{15}' = (2(1 + c_1))^{d + 2}$. 

Next, we consider the case $\aps{x - y} > 2(1 + c_1)r_t$. We write $\mathcal{T} = \mathcal{T}^Y(y, r_t)$. Using the strong Markov property and \eqref{eq:MSC-2-pom1} we get
	\begin{align}\label{eq:MSC-2-key_big_dist-part1}
		\bbP^x\Big(\mathcal{T} \le t, \sup_{\mathcal{T} \le s \le \mathcal{T} + t} \aps{Y_s - Y_\mathcal{T}} \le c_1r_t\Big)
		&= \bbP^{Y_\mathcal{T}} \Big(\sup_{s \le t} \aps{Y_s - Y_0} \le c_1r_t\Big) \bbP^x(\mathcal{T} \le t)\nonumber \\
		&\ge \frac{1}{2} \bbP^x(\mathcal{T} \le t).
	\end{align}
If $\mathcal{T}  \le t$ and $\sup_{\mathcal{T}  \le s \le \mathcal{T}  + t} \aps{Y_s - Y_\mathcal{T} } \le c_1 r_t$ then $\aps{Y_t - Y_\mathcal{T} } \le c_1 r_t$.
As $\mathcal{T}$ is the first moment when the process $Y_t$ hits the ball $B(y, r_t)$, it follows that
	\begin{equation*}
		\aps{Y_t - y} \le \aps{Y_t - Y_\mathcal{T}} + \aps{Y_\mathcal{T} - y} \le c_1r_t + r_t = (1 + c_1)r_t.
	\end{equation*}
Combining these two inequalities with \eqref{eq:MSC-2-key_big_dist-part1}, we get
	\begin{align}\label{al:MSC-2-key_big_dist-part2}
		\bbP^x(\mathcal{T} \le t)
		& \le 2 \bbP^x(\aps{Y_t - y} \le (1 + c_1)r_t) 
		\le 
		2 \!\!\! \!\!\! \sum_{z \in B(y, (1 + c_1)r_t)} q(t, x, z).
	\end{align}
	Since $x \notin B(y, 2(1 + c_1)r_t)$ and $z \in B(y, (1 + c_1)r_t)$, we have $x \neq z$ and thus we can use \eqref{eq:UHK_for_x_neq_y}. Notice also that $\aps{x - z} \ge \aps{x - y}/2$.
This, monotonicity of $j$, \cite[Lemma 2.4]{MS18} and \eqref{al:MSC-2-key_big_dist-part2} imply
	\begin{align}\label{al:MSC-2-key_big_dist}
		\bbP^x(\mathcal{T}  \le t)
		\le c_2 \, t \sum_{z \in B(y, (1 + c_1)r_t)} j(\aps{x - z})
		\le C_{15}'' t r_t^d j(\aps{x - y}).
	\end{align}
Relations \eqref{eq:MSC-2-key_small_dist} and \eqref{al:MSC-2-key_big_dist} yield the result.
\end{proof}

\begin{PROP}\label{prop:MSC-3}
	There exists a constant $C_{16} > 0$ such that
	\begin{equation*}
		\bbP^x(\mathcal{T}^{S^{\phi}}(y, r_n) \le n) \le C_{16} n r_n^d j(\aps{x - y}),
	\end{equation*}
	for all $x, y \in \bbZ^d$ and $n \in \bbN$.
\end{PROP}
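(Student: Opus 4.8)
The plan is to transfer the continuous-time hitting time estimate of Lemma \ref{lm:MSC-2} to the discrete walk $S^\phi$ by exploiting the explicit construction $Y_t = S^{\phi}_{N_t}$, where $(N_t)_{t\ge0}$ is the Poisson clock built from the i.i.d.\ $\mathrm{Exp}(1)$ variables $(U_i)$ that are independent of $S^{\phi}$, and $T_k = U_1+\cdots+U_k$ with $T_0=0$. The key observation is that for every radius $r$ one has the pathwise identity $\mathcal{T}^Y(y, r) = T_{\mathcal{T}^{S^{\phi}}(y, r)}$: the continuous process enters $B(y,r)$ for the first time exactly at the jump time indexed by the first $k$ with $S^{\phi}_k\in B(y,r)$. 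Since $\mathcal{T}^{S^\phi}(y,r)$ is $\sigma(S^\phi)$-measurable, hence independent of the clock $(U_i)$, this identity is what lets us pass between the two time scales.

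Write $\kappa = \mathcal{T}^{S^{\phi}}(y, r_n)$ and fix $t=2n$. Conditioning on $S^\phi$ and using independence together with the fact that $T_k\le T_n$ a.s.\ for $k\le n$, I would estimate
\[
\bbP^x\OBL{\mathcal{T}^Y(y, r_n) \le 2n} \ge \bbP^x\OBL{T_\kappa \le 2n,\ \kappa \le n} = \sum_{k=0}^{n} \bbP^x(\kappa = k)\,\bbP(T_k \le 2n) \ge \bbP(T_n \le 2n)\,\bbP^x(\kappa \le n).
\]
Because $\bbE[T_n]=n$, Markov's inequality gives $\bbP(T_n\le 2n)\ge \tfrac12$ for every $n\in\bbN$, so
\[
\bbP^x\OBL{\mathcal{T}^{S^{\phi}}(y, r_n) \le n} = \bbP^x(\kappa\le n) \le 2\,\bbP^x\OBL{\mathcal{T}^Y(y, r_n) \le 2n}.
\]
Since $\phi^{-1}$ is increasing we have $r_n\le r_{2n}$, so $B(y,r_n)\subseteq B(y,r_{2n})$ and $\{\mathcal{T}^Y(y,r_n)\le 2n\}\subseteq\{\mathcal{T}^Y(y,r_{2n})\le 2n\}$; applying Lemma \ref{lm:MSC-2} with $t=2n\ge1$ then yields
\[
\bbP^x\OBL{\mathcal{T}^{S^{\phi}}(y, r_n) \le n} \le 2\,\bbP^x\OBL{\mathcal{T}^Y(y, r_{2n}) \le 2n} \le 4C_{15}\, n\, r_{2n}^d\, j(\aps{x - y}).
\]

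To finish I would absorb $r_{2n}^d$ into $r_n^d$ via the scaling \eqref{eq:scaling_for_inverse}: taking $R=n^{-1}$ and $r=(2n)^{-1}$ there gives $\phi^{-1}(n^{-1})/\phi^{-1}((2n)^{-1})\le (2/c_*)^{1/\alpha_*}$, hence $r_{2n}^d\le (2/c_*)^{d/(2\alpha_*)}\, r_n^d$, and the claim follows with $C_{16}=4C_{15}(2/c_*)^{d/(2\alpha_*)}$. I do not expect a real obstacle here; the only points that need a word of care are the decoupling of the Poisson clock from $S^\phi$ when conditioning on $\{\kappa=k\}$ (which relies on the independence built into the construction of $Y$), and the degenerate case $x\in B(y,r_n)$, where $\kappa=0$, the left-hand side equals $1$, and monotonicity of $j$ together with $j(r_n)=r_n^{-d}\phi(\phi^{-1}(n^{-1}))=r_n^{-d}n^{-1}$ gives $n\, r_n^d\, j(\aps{x-y})\ge n\, r_n^d\, j(r_n)=1$; this case is in any event covered by the computation above using the convention $T_0=0$.
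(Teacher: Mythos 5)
Your proof is correct and follows essentially the same route as the paper: use independence of the Poisson clock together with $\bbP(T_n\le 2n)\ge \tfrac12$ (Markov's inequality) to pass to $\bbP^x(\mathcal{T}^Y(y,r_n)\le 2n)$, enlarge the ball to $B(y,r_{2n})$, apply Lemma~\ref{lm:MSC-2} at $t=2n$, and absorb $r_{2n}^d$ into $r_n^d$ via \eqref{eq:scaling_for_inverse}. The only cosmetic difference is that you make the conditioning on $\kappa=\mathcal{T}^{S^\phi}(y,r_n)$ explicit via the identity $\mathcal{T}^Y(y,r)=T_{\mathcal{T}^{S^\phi}(y,r)}$, whereas the paper states the event inclusion $\{\mathcal{T}^{S^\phi}(y,r_n)\le n,\, T_n\le 2n\}\subseteq\{\mathcal{T}^Y(y,r_n)\le 2n\}$ directly.
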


\begin{proof}
As before $(T_k)_{k \in \bbN_0}$ stand for the arrival times of the Poisson process $(N_t)_{t\ge 0}$ that was used to define the process $Y$. More precisely, $N_t = k$ for all $T_k \le t < T_{k + 1}$. Using the Markov inequality, we easily get that $\bbP(T_n \le 2n) \ge \frac{1}{2}$.
By independence, Lemma \ref{lm:MSC-2} and \eqref{eq:scaling_for_inverse}, we obtain
	\begin{align*}
		\frac{1}{2} \bbP^x \big(\mathcal{T}^{S^{\phi}}(y, r_n) \le n\big)
		& \le \bbP^x \big(\mathcal{T}^{S^{\phi}}(y, r_n) \le n, T_n \le 2n\big) \le \bbP^x \big(\mathcal{T}^Y(y, r_n) \le 2n\big) \\
		& \le \bbP^x \big(\mathcal{T}^Y(y, r_{2n}) \le 2n\big) \le 2 C_{15} n r_{2n}^d j(\aps{x - y}) = C_{16} n r_n^d j(\aps{x - y}),
	\end{align*}
as claimed.
\end{proof}

In the following theorem we finally prove the upper bound for the transition probability of the random walk $S^{\phi}$. In the proof we again apply the parabolic Harnack inequality.

\begin{TM}\label{tm:upper_bound_for_Sphi}
	There exists a constant $C > 0$ such that
	\begin{equation*}
		p^{\phi}(n, x, y) \le C \Big(\big(\phi^{-1}(n^{-1})\big)^{d/2} \wedge \frac{n}{\aps{x - y}^d} \phi(\aps{x - y}^{-2})\Big),
	\end{equation*}
for all $x, y \in \bbZ^d$ and $n \in \bbN$.
\end{TM}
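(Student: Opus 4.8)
The plan is the following. Since Corollary~\ref{rem:off-diag_bd_from_on-diag} already gives $p^\phi(n,x,y)\le c\,\big(\phi^{-1}(n^{-1})\big)^{d/2}$ for all $n\in\bbN$ and all $x,y$, it suffices to prove the complementary off\-diagonal bound
\[
	p^\phi(n,x,y)\le C\,n\,j(\aps{x-y}),\qquad x\neq y,
\]
the asserted estimate then following by taking the minimum of the two (recall $j(r)=r^{-d}\phi(r^{-2})$, $r_n=\big(\phi^{-1}(n^{-1})\big)^{-1/2}$, $r_n^{-d}=\big(\phi^{-1}(n^{-1})\big)^{d/2}$). For $n$ below a fixed threshold I would argue by induction on $n$: the base case $n=1$ is \eqref{eq:one_step_prob_estimates}, and in the step one writes $p^\phi(n+1,x,y)=\sum_z p^\phi(1,x,z)\,p^\phi(n,z,y)$, treats the $z\in\{x,y\}$ terms by \eqref{eq:one_step_prob_estimates} and $p^\phi(1,0)\le 1$, and bounds the rest using $p^\phi(1,x,z)\le c\,j(\aps{x-z})$, the inductive hypothesis and the elementary convolution estimate $\sum_{z\neq x,y}j(\aps{x-z})\,j(\aps{z-y})\le c\,j(\aps{x-y})$; the last inequality is proved by splitting according to whether $\aps{x-z}\ge\aps{x-y}/2$ or $\aps{z-y}\ge\aps{x-y}/2$, using monotonicity of $j$, relation \eqref{eq:phi(v)/phi(u)_le_v/u}, and Lemma~\ref{lm:Lemma21_from_CKW} (with $r=1$) to sum the tails. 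Since only finitely many values of $n$ enter here, the growth of the constant in this induction does no harm.

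The substantive part is large $n$, where the idea is to feed the hitting time bound of Proposition~\ref{prop:MSC-3} into the parabolic Harnack inequality (Theorem~\ref{tm:PHI}) \emph{centred at the target point $y$}. Fix a small constant $\varepsilon>0$, put $m^*=\ceil{\varepsilon n}$, and choose $R$ with $\gamma/\phi(R^{-2})=m^*$ (possible for $n$ large, with $R\to\infty$ and, by \eqref{eq:scaling}--\eqref{eq:scaling_for_inverse}, $R\asymp r_n$). Let $n_0=n+m^*$ and $q(k,w)=p^\phi(n_0-k,x,w)$, which is parabolic on $\{0,1,\dots,n_0\}\times\bbZ^d$ by Lemma~\ref{lm:hk_is_parabolic_function}; for $\varepsilon$ small one has $\floor{\gamma/\phi((\sqrt b R)^{-2})}\le n_0$ for all large $n$ (because $\phi((\sqrt bR)^{-2})\ge c\,\phi(R^{-2})$ by \eqref{eq:scaling}), so $q$ is parabolic on the set required by Theorem~\ref{tm:PHI}. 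As $q(m^*,y)=p^\phi(n,x,y)$ and $(m^*,y)\in Q\!\big(\floor{\gamma/\phi(R^{-2})},y,R/B\big)$, Theorem~\ref{tm:PHI} with $z=y$ gives, for $n$ large,
\[
	p^\phi(n,x,y)\ \le\ C_{PH}\min_{w\in B(y,R/B)}p^\phi(n_0,x,w)\ \le\ \frac{C_{PH}}{\aps{B(y,R/B)}}\,\bbP^x\!\big(S^\phi_{n_0}\in B(y,R/B)\big).
\]
Then I would bound $\bbP^x(S^\phi_{n_0}\in B(y,R/B))\le\bbP^x\big(\mathcal{T}^{S^\phi}(y,R/B)\le n_0\big)\le\bbP^x\big(\mathcal{T}^{S^\phi}(y,r_{n_0})\le n_0\big)$ — the last step being legitimate because $R/B\le r_{n_0}$ once $\varepsilon<\gamma$ — and apply Proposition~\ref{prop:MSC-3} to obtain $\bbP^x(S^\phi_{n_0}\in B(y,R/B))\le C_{16}\,n_0\,r_{n_0}^d\,j(\aps{x-y})$. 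Finally, using $\aps{B(y,R/B)}\asymp R^d\asymp r_n^d\asymp r_{n_0}^d$ and $n_0\asymp n$ (all from \eqref{eq:volume_growth} and the scaling of $\phi^{-1}$), the constants collapse and one gets $p^\phi(n,x,y)\le C\,n\,j(\aps{x-y})$ for every $x\neq y$ and every $n$ exceeding the threshold in Theorem~\ref{tm:PHI}. Combining with the bounded\-$n$ case and Corollary~\ref{rem:off-diag_bd_from_on-diag} finishes the proof.

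I expect the main obstacle to be the quantitative bookkeeping in this last argument. The parabolic Harnack inequality is forward in time — it bounds $q$ on a later space–time box by $q$ on an earlier slice — so it must be applied to $q(k,w)=p^\phi(n_0-k,x,w)$ with the box centred at $y$, and the auxiliary constant $\varepsilon$ has to be chosen so that three conditions hold \emph{simultaneously} for all large $n$: that one can pick $R$ with $\gamma/\phi(R^{-2})=m^*=\ceil{\varepsilon n}$; that $q$ be parabolic on the larger set $\{0,\dots,\floor{\gamma/\phi((\sqrt bR)^{-2})}\}\times\bbZ^d$, i.e.\ $\floor{\gamma/\phi((\sqrt bR)^{-2})}\le n+m^*$; and that $R/B\le r_{n_0}$, so that Proposition~\ref{prop:MSC-3} applies with its natural radius $r_{n_0}$. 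Each of these reduces to a comparison between powers produced by the two\-sided scaling of $\phi$ and $\phi^{-1}$, and verifying that a single sufficiently small $\varepsilon$ — depending only on $\gamma$, $b$, $B$ and the scaling data — meets all three is where the care lies.
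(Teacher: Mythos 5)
Your proposal is correct and follows essentially the same strategy as the paper: feed the hitting-time estimate of Proposition~\ref{prop:MSC-3} into the parabolic Harnack inequality (Theorem~\ref{tm:PHI}) centred at the target point $y$, bounding the minimum over $B(y,R/B)$ by the average and hence by the hitting probability. The only differences from the paper's proof are cosmetic: your re-parametrization via $n_0=n+\ceil{\varepsilon n}$ and $\gamma/\phi(R^{-2})=m^*$ (which bounds $p^\phi(n,x,y)$ directly for each large $n$) is a minor variant of the paper's choice $q(k,w)=p^\phi(bn-k,x,w)$ with $\gamma/\phi(R^{-2})=n$, and your induction using the convolution bound $\sum_{z\neq x,y} j(\aps{x-z})\,j(\aps{z-y})\le c\,j(\aps{x-y})$ for bounded $n$ is a clean substitute for the paper's terse appeal to the step-up technique from the proof of Theorem~\ref{thm:lower_bound}.
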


\begin{proof}
	By Proposition \ref{prop:MSC-3} we have for all $k \in \bbN$
	\begin{equation*}
		\sum_{z \in B(y, r_k)} p^{\phi}(k, x, z) \le \bbP^x(\mathcal{T}^{S^{\phi}}(y, r_k) \le k) \le C_{16} k r_k^d j(\aps{x - y}).
 	\end{equation*}
 	On the other hand
 	\begin{equation*}
 		\sum_{z \in B(y, r_k)} p^{\phi}(k, x, z) \ge c' r_k^d \min_{z \in B(y, r_k)} p^{\phi}(k, x, z).
 	\end{equation*}
Hence
	\begin{equation}\label{eq:upper_bound_for_Sphi-pom1}
\min_{z \in B(y, r_k)} p^{\phi}(k, x, z) \le c_1 k j(\aps{x - y}).
 	\end{equation}
 Next we apply the parabolic Harnack inequality. 
We choose $R > 0$ to satisfy $\gamma / \phi(R^{-2}) = n$, where $\gamma$ is the constant from Theorem \ref{tm:maximal_inequality}. Remember that we can choose $\gamma$ to be even smaller than specified in the theorem. Thus we take  $\gamma \le B^{-2}$ where $B$ is the constant defined in \eqref{eq:def_of_B_and_b}. By \eqref{eq:phi(v)/phi(u)_le_v/u} we easily get that $r_n \le R/B$.
 By Lemma \ref{lm:hk_is_parabolic_function}, the function $q(k, w) = p^{\phi}(bn - k, x, w)$ is parabolic on $\{0, 1, 2, \ldots, bn\} \times \bbZ^d$, where $b$ is defined at \eqref{eq:def_of_B_and_b}.
With our choice $bn \ge \floor{\gamma / \phi((\sqrt{b}R)^{-2})}$ and thus the  function $q$ is parabolic on $\{0, 1, 2, \ldots, \floor{\gamma / \phi((\sqrt{b}R)^{-2})}\} \times \bbZ^d$. By \eqref{eq:upper_bound_for_Sphi-pom1}, we get
 	\begin{equation}\label{eq:upper_bound_for_Sphi-pom2}
		\min_{z \in B(y, R/B)} q(0, z) = \min_{z \in B(y, R/B)} p^{\phi}(bn, x, z) \le \min_{z \in B(y, r_n)} p^{\phi}(bn, x, z) \le c_1 bn j(\aps{x - y}).
 	\end{equation}
Choosing $n$ big enough we can enlarge $R$ so that we can apply Theorem \ref{tm:PHI}. Hence
\begin{equation*}
 		\max_{(k, z) \in Q(\floor{\gamma / \phi(R^{-2})}, y, R/B)} q(k, z) \le C_{PH} \min_{z \in B(y, R/B)}q(0, z).
 	\end{equation*}
 	Since $n = \gamma/\phi(R^{-2})$, it is clear that $(n, y) \in Q(\floor{\gamma / \phi(R^{-2})}, y, R/B)$. Combining this with \eqref{eq:upper_bound_for_Sphi-pom2}, we obtain 
 		\begin{align}\label{al:upper_bound_for_(b-1)n}
 		p^{\phi}((b - 1)n, x, y)
 		& = q(n, y) \le \max_{(k, z) \in Q(\floor{\gamma / \phi(R^{-2})}, y, R/B)} q(k, z) \le C_{PH} \min_{z \in B(y, R/B)}q(0, z) \nonumber \\
 		& \le C_{PH} c_1 bn j(\aps{x - y}) 
 		=c_2 (b - 1)n j(\aps{x - y}).
 	\end{align}
 Similarly as in the proof of Theorem \ref{thm:lower_bound}, we can show that this is enough to get the desired upper bound for all $n \in \bbN$. 
Finally, we have
	\begin{equation*}
		p^{\phi}(n, x, y) \le c_3 n j(\aps{x - y}),
	\end{equation*}
	for all $x, y \in \bbZ^d$, $x \neq y$ and $n \in \bbN$. This combined with Corollary \ref{rem:off-diag_bd_from_on-diag} yields the result.
\end{proof}

\section{Appendix}

\begin{LM}\label{lm:scaling_for_R_ge_1}
	Let $L \ge 1$. Then for all $0 < r \le 1\wedge R \le R \le L$ we have	\begin{equation}\label{eq:scaling_for_R_ge_1}
		\frac{c_*}{L^{\alpha_*}} \OBL{\frac{R}{r}}^{\alpha_*} \le \frac{\phi(R)}{\phi(r)} \le \phi(L) c^* \OBL{\frac{R}{r}}^{\alpha^*}.
	\end{equation}
\end{LM}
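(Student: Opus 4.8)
The plan is to reduce everything to the scaling condition \eqref{eq:scaling} by splitting into the two cases $R \le 1$ and $R > 1$, exploiting in the second case that $\phi$ is non-decreasing with $\phi(1) = 1$. Recall that the hypothesis $0 < r \le 1 \wedge R \le R \le L$ simply means $r \in (0,1]$, $r \le R$ and $R \le L$.

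First I would treat the case $R \le 1$. Then $0 < r \le R \le 1$, so \eqref{eq:scaling} applies verbatim and gives
\[
c_* \OBL{\frac{R}{r}}^{\alpha_*} \le \frac{\phi(R)}{\phi(r)} \le c^* \OBL{\frac{R}{r}}^{\alpha^*}.
\]
Since $L \ge 1$ we have $c_*/L^{\alpha_*} \le c_*$, and since $\phi$ is non-decreasing with $\phi(1) = 1$ we have $\phi(L) \ge 1$, hence $c^* \le \phi(L)\,c^*$. Both claimed inequalities follow at once.

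Next I would treat the case $R > 1$, so that $0 < r \le 1 < R \le L$. Here the idea is to factor $\phi(R)/\phi(r) = \phi(R)\cdot\big(\phi(1)/\phi(r)\big)$, using $\phi(1)=1$, and to estimate the two factors separately. For the lower bound, monotonicity gives $\phi(R) \ge \phi(1) = 1$, while \eqref{eq:scaling} applied to the pair $(r,1)$ gives $\phi(1)/\phi(r) \ge c_* r^{-\alpha_*}$; combining these and using $R \le L$ (so $R^{\alpha_*} \le L^{\alpha_*}$) yields
\[
\frac{\phi(R)}{\phi(r)} \ge c_* r^{-\alpha_*} \ge \frac{c_*}{L^{\alpha_*}} \OBL{\frac{R}{r}}^{\alpha_*}.
\]
For the upper bound, monotonicity gives $\phi(R) \le \phi(L)$, and \eqref{eq:scaling} for the pair $(r,1)$ gives $\phi(1)/\phi(r) \le c^* r^{-\alpha^*}$; since $R > 1$ forces $R^{\alpha^*} \ge 1$, we obtain
\[
\frac{\phi(R)}{\phi(r)} \le \phi(L)\,c^*\, r^{-\alpha^*} \le \phi(L)\,c^* \OBL{\frac{R}{r}}^{\alpha^*},
\]
which is the asserted bound. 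Putting the two cases together proves \eqref{eq:scaling_for_R_ge_1}.

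There is essentially no real obstacle in this lemma; the only point requiring a moment's care is that $R$ is allowed to exceed $1$, where \eqref{eq:scaling} is unavailable. This is precisely why the correction factors $L^{-\alpha_*}$ on the left and $\phi(L)$ on the right appear: they are the "prices" paid for pushing the argument of $\phi$ past the threshold $1$, after which one can only fall back on plain monotonicity of $\phi$ rather than on the scaling estimate.
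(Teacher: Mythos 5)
Your proof is correct and follows essentially the same route as the paper: dispose of the case $R\le 1$ directly by \eqref{eq:scaling} and handle $R>1$ by chaining monotonicity of $\phi$ (through $\phi(1)=1$) with \eqref{eq:scaling} applied to the pair $(r,1)$. The only cosmetic difference is that you phrase the $R>1$ case as a factorization $\phi(R)/\phi(r)=\phi(R)\cdot\big(\phi(1)/\phi(r)\big)$, whereas the paper simply replaces $\phi(R)$ by $\phi(L)$ or $\phi(1)$ inside the ratio; the inequalities used are identical.
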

\begin{proof}
	Since $L \ge 1$, relation \eqref{eq:scaling_for_R_ge_1} follows directly from \eqref{eq:scaling} in the case $R \le 1$. For $0 < r \le 1 < R \le L$ (using \eqref{eq:scaling} and the fact that $\phi$ is increasing) we have
	\begin{equation*}
		\frac{\phi(R)}{\phi(r)} \le \frac{\phi(L)}{\phi(r)}  \le \phi(L) c^* \OBL{\frac{1}{r}}^{\alpha^*} \le \phi(L) c^* \OBL{\frac{R}{r}}^{\alpha^*},
	\end{equation*}
and similarly	
	\begin{equation*}
		\frac{\phi(R)}{\phi(r)} \ge \frac{\phi(1)}{\phi(r)} \ge c_* \OBL{\frac{1}{r}}^{\alpha_*} \ge \frac{c_*}{L^{\alpha_*}} \OBL{\frac{R}{r}}^{\alpha_*},
	\end{equation*}
as desired.
\end{proof}

\begin{LM}\label{lm:R_0}
	There exists a constant $R_0 \ge B$ such that
	\begin{equation*}
		\floor{\gamma / \phi(R^{-2})} \ge \floor{\gamma / \phi((R/B)^{-2})} + 1, \quad  R \ge R_0,
	\end{equation*}
	where $B$ is defined at \eqref{eq:def_of_B_and_b}.
\end{LM}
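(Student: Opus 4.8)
The plan is to reduce the statement to a comparison of the two \emph{real} numbers $\gamma/\phi(R^{-2})$ and $\gamma/\phi((R/B)^{-2})$: if these differ by more than $1$, then their integer parts differ by at least $1$. Indeed, whenever $a-b>1$ one has $\floor{a}-\floor{b} > (a-1)-b = (a-b)-1 > 0$, and being an integer this difference is $\ge 1$. So it suffices to find $R_0 \ge B$ with
$$
\frac{\gamma}{\phi(R^{-2})} - \frac{\gamma}{\phi((R/B)^{-2})} > 1, \qquad R \ge R_0.
$$

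First I would record that $B \ge 3 > 1$ by \eqref{eq:def_of_B_and_b}, so $(R/B)^{-2} = B^2 R^{-2}$ and in particular $R^{-2} \le (R/B)^{-2}$, while $(R/B)^{-2} \le 1$ as soon as $R \ge B$. Hence for $R \ge B$ the scaling condition \eqref{eq:scaling} applies with $r = R^{-2}$ and $R' = (R/B)^{-2}$, for which $R'/r = B^2$, yielding
$$
\frac{\phi((R/B)^{-2})}{\phi(R^{-2})} \ge c_* B^{2\alpha_*}.
$$
Since $B \ge (2/c_*)^{1/2\alpha_*}$ by \eqref{eq:def_of_B_and_b}, we get $c_* B^{2\alpha_*} \ge 2$, and therefore $\phi(R^{-2})/\phi((R/B)^{-2}) \le 1/2$ for all $R \ge B$.

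Consequently, for $R \ge B$,
$$
\frac{\gamma}{\phi(R^{-2})} - \frac{\gamma}{\phi((R/B)^{-2})} = \frac{\gamma}{\phi(R^{-2})}\OBL{1 - \frac{\phi(R^{-2})}{\phi((R/B)^{-2})}} \ge \frac{\gamma}{2\,\phi(R^{-2})}.
$$
Applying \eqref{eq:scaling} once more with $R'=1$ gives $\phi(R^{-2}) \le R^{-2\alpha_*}/c_*$, so the right-hand side is bounded below by $\gamma c_* R^{2\alpha_*}/2$, which tends to $+\infty$ as $R \to \infty$. Thus one can pick $R_0 \ge B$ large enough that $\gamma c_* R^{2\alpha_*}/2 > 1$ for every $R \ge R_0$, and then the displayed strict inequality holds, giving $\floor{\gamma/\phi(R^{-2})} \ge \floor{\gamma/\phi((R/B)^{-2})} + 1$ for $R \ge R_0$. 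There is no genuine obstacle in this argument; the only point requiring attention is keeping the arguments of $\phi$ inside $(0,1]$ so that the one-sided bound \eqref{eq:scaling} is legitimately applicable, which is precisely why we impose $R_0 \ge B$.
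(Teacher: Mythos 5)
Your proof is correct and follows essentially the same route as the paper: reduce the floor inequality to showing that the real difference $\gamma/\phi(R^{-2}) - \gamma/\phi((R/B)^{-2})$ exceeds a fixed threshold, then invoke the lower scaling bound \eqref{eq:scaling} together with the choice $c_*B^{2\alpha_*}\ge 2$ built into \eqref{eq:def_of_B_and_b} to show this difference is bounded below by a quantity tending to infinity as $R\to\infty$. The only cosmetic difference is that the paper works with the decomposition $\floor{x}=x-m(x)$ and uses the cruder threshold $2$, whereas you compare floors directly and use the sharper (and equally valid) threshold $1$.
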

\begin{proof}
	For every $x \in \bbR$ we write $\floor{x} = x - m(x)$, $m(x)\in [0,1)$. 
Thus, we look for $R_0$ such that
	\begin{align*}
\frac{\gamma}{\phi(R^{-2})} - \frac{\gamma}{\phi(B^2 R^{-2})} & \ge 1 + m(\gamma / \phi(R^{-2})) - m \big(\gamma / \phi((R/B)^{-2})\big), \quad  R \ge R_0.
	\end{align*}
	Observe that $1 + m(\gamma / \phi(R^{-2})) - m \big(\gamma / \phi((R/B)^{-2})\big) \le 2.$. Hence, it is enough to find $R_0$ large enough and such that
	\begin{equation*}
		\frac{\gamma}{\phi(R^{-2})} - \frac{\gamma}{\phi(B^2 R^{-2})} \ge 2, \quad  R \ge R_0.
	\end{equation*}
	By \eqref{eq:scaling}, we get
	\begin{align}\label{al:helpful_result}
		\frac{\gamma}{\phi(R^{-2})}
		 - \frac{\gamma}{\phi(B^2 R^{-2})} 
		 \ge \frac{\gamma}{\phi(B^2 R^{-2})} \OBL{c_* B^{2\alpha_*} - 1} 
		  \ge  \frac{\gamma}{\phi(B^2 R^{-2})} \overset{R \to \infty}{\longrightarrow} \infty.
	\end{align}
	Therefore, there exists $R_0 \ge B$ such that
	\begin{equation}\label{eq:R0_big_enough}
		\frac{\gamma}{\phi(B^2 R^{-2})} \ge 2, \quad  R \ge R_0
	\end{equation}
and the proof is finished.	
\end{proof}

\begin{LM}\label{lm:bound_on_gamma_dist}
	Let $(U_i)_{i \in \bbN}$ be a sequence of independent, identically distributed exponential random variables with parameter $1$ and let $T_n = \sum_{i = 1}^n U_i$. Then for all $n \in \bbN$ and $t > 0$ 
	\begin{equation*}
		\bbP(T_n \le t) \le t.
	\end{equation*}
\end{LM}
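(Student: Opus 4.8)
The plan is to exploit the fact that the partial sums $T_n$ are increasing in $n$, which makes the event $\{T_n \le t\}$ shrink as $n$ grows and reduces everything to the case $n=1$. Concretely, since each $U_i$ is non-negative almost surely, we have $T_n = U_1 + \cdots + U_n \ge U_1 = T_1$, so $\{T_n \le t\} \subseteq \{T_1 \le t\}$ and therefore $\bbP(T_n \le t) \le \bbP(T_1 \le t) = \bbP(U_1 \le t) = 1 - e^{-t}$ for every $t > 0$ and every $n \in \bbN$.

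It then remains only to verify the elementary inequality $1 - e^{-t} \le t$ for $t \ge 0$. This follows, for instance, from the convexity of $e^{-t}$, whose graph lies above its tangent line at $0$, namely $1 - t$; equivalently, the function $g(t) := t - 1 + e^{-t}$ satisfies $g(0) = 0$ and $g'(t) = 1 - e^{-t} \ge 0$ for $t \ge 0$, hence $g \ge 0$ on $[0, \infty)$. Combining this with the previous display yields $\bbP(T_n \le t) \le t$.

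An equivalent route, phrased through the Poisson process $(N_t)_{t \ge 0}$ of intensity $1$ that is used later in the paper, is to note that $\{T_n \le t\} = \{N_t \ge n\}$, whence $\bbP(N_t \ge n) \le \bbP(N_t \ge 1) \le \bbE[N_t] = t$ by Markov's inequality. Either way there is no genuine obstacle: the statement is a one-line consequence of monotonicity of $T_n$ in $n$ combined with a standard scalar inequality. The only point requiring a little care is that the bound must hold for \emph{all} $t > 0$ and not merely for small $t$, which is exactly why it is preferable to pass to $n = 1$ (or to $\{N_t \ge 1\}$) rather than attempt to estimate the Gamma density $s^{n-1}e^{-s}/(n-1)!$ directly on the interval $[0, t]$.
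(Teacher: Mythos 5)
Your proof is correct, but it takes a genuinely different and cleaner route than the paper. You observe that $T_n \ge T_1 = U_1$ almost surely (since the $U_i$ are non-negative), so $\bbP(T_n \le t) \le \bbP(U_1 \le t) = 1 - e^{-t} \le t$, the last step being the standard tangent-line inequality for $e^{-t}$ at $0$; your alternative phrasing via $\{T_n \le t\} = \{N_t \ge n\} \subseteq \{N_t \ge 1\}$ and Markov's inequality is the same idea in Poisson-process clothing. The paper instead does exactly what you flag as the more delicate route: it bounds the Gamma$(n,1)$ density $f_{T_n}(s) = s^{n-1}e^{-s}/(n-1)!$ uniformly by $1$, by computing its mode at $s = n-1$ and invoking the Stirling-type bound $n! \ge \sqrt{2\pi n}\, n^n e^{-n}$, and then integrates over $[0,t]$. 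Both arguments are valid; yours is more elementary (no Stirling, no density calculus), reduces immediately to the $n=1$ case, and is arguably the more transparent way to see why the claim has no $n$-dependence on the right-hand side. The paper's density bound gives the slightly stronger pointwise information $f_{T_n} \le 1$, which is not needed here.
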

\begin{proof}
	Denote by $F_{T_n}(t) = \bbP(T_n \le t)$ the distribution function and by $f_{T_n}$ the density of $T_n$. 
It is enough to prove that $f_{T_n}(t) \le 1$, for $t > 0$.
For $n = 1$ the result is obvious. For $n \ge 2$ it is easy to check that the function $f_{T_n}$ obtains maximum for $t = n - 1$ and that 
	\begin{equation*}
		\max f_{T_n} = \frac{(n - 1)^{n - 1} e^{-(n - 1)}}{(n - 1)!}.
	\end{equation*}
The result follows from the inequality $n!\geq \sqrt{2 \pi n} n^n e^{-n}$.
\end{proof}

\noindent \textbf{Acknowledgement}.
The authors wish to express their gratitude to Z. Vondra\v{c}ek for his help and encouragement. This article was started at Graz University of Technology and we want to thank W. Woess for his hospitality.
We also thank A. Bendikov, P. Kim, R. Schilling and J. Wang for stimulating discussions.

This work has been financially supported by \textit{Deutscher Akademischer Austauschdienst} (DAAD) and \textit{Ministry of Science and Education of the Republic of Croatia} (MSE) via project \textit{Random Time-Change and Jump Processes}. W.\ Cygan was supported by \textit{National Science Centre (Poland)} under grant 2015/17/B/ST1/00062. S.\ \v{S}ebek was supported by \textit{Croatian Science Foundation} under project 3526.

\bibliographystyle{babamspl}
\bibliography{HKE}

\begin{thebibliography}{10}
  \providecommand{\bysame}{\leavevmode\hbox to3em{\hrulefill}\thinspace}
  \providecommand{\MR}{\relax\ifhmode\unskip\space\fi MR }
  \providecommand{\MRhref}[2]{%
    \href{http://www.ams.org/mathscinet-getitem?mr=#1}{#2}
  }
  \providecommand{\href}[2]{#2}
  \providebibliographyfont{name}{}%
  \providebibliographyfont{lastname}{}%
  \providebibliographyfont{title}{\emph}%
  \providebibliographyfont{jtitle}{\btxtitlefont}%
  \providebibliographyfont{etal}{}%
  \providebibliographyfont{journal}{}%
  \providebibliographyfont{volume}{\textbf}%
  \providebibliographyfont{ISBN}{\MakeUppercase}%
  \providebibliographyfont{ISSN}{\MakeUppercase}%
  \providebibliographyfont{url}{\url}%
  \providebibliographyfont{numeral}{}%
  \providecommand\btxprintamslanguage[1]{\ (#1)}
  \expandafter\btxselectlanguage\expandafter {\btxfallbacklanguage}

\expandafter\btxselectlanguage\expandafter {\btxfallbacklanguage}
\bibitem {Alexop}
\btxnamefont {G.\btxfnamespacelong K. \btxlastnamefont {Alexopoulos}},
  \btxjtitlefont {\btxifchangecase {Random walks on discrete groups of
  polynomial volume growth}{Random walks on discrete groups of polynomial
  volume growth}}, \btxjournalfont {Ann. Probab.} \btxvolumefont {30} (2002),
  \btxnumbershort {.}~2, 723--801, {\latintext
  \btxurlfont{https://doi.org/10.1214/aop/1023481007}}.

\bibitem {Barlow_book}
\btxnamefont {M.\btxfnamespacelong T. \btxlastnamefont {Barlow}}, \btxtitlefont
  {\btxifchangecase {Random walks and heat kernels on graphs}{Random walks and
  heat kernels on graphs}}, \btxpublisherfont {Cambridge University Press,
  Cambridge}, 2017, {\latintext
  \btxurlfont{https://doi.org/10.1017/9781107415690}}.

\bibitem {BarlBasKumag}
\btxnamefont {M.\btxfnamespacelong T. \btxlastnamefont {Barlow}}, \btxnamefont
  {R.\btxfnamespacelong F. \btxlastnamefont {Bass}}\btxandcomma {} \btxandlong
  {} \btxnamefont {T.~\btxlastnamefont {Kumagai}}, \btxjtitlefont
  {\btxifchangecase {Parabolic {H}arnack inequality and heat kernel estimates
  for random walks with long range jumps}{Parabolic {H}arnack inequality and
  heat kernel estimates for random walks with long range jumps}},
  \btxjournalfont {Math. Z.} \btxvolumefont {261} (2009), \btxnumbershort
  {.}~2, 297--320, {\latintext
  \btxurlfont{https://doi.org/10.1007/s00209-008-0326-5}}.

\bibitem {BL02}
\btxnamefont {R.\btxfnamespacelong F. \btxlastnamefont {Bass}} \btxandlong {}
  \btxnamefont {D.\btxfnamespacelong A. \btxlastnamefont {Levin}},
  \btxjtitlefont {\btxifchangecase {Transition probabilities for symmetric jump
  processes}{Transition probabilities for symmetric jump processes}},
  \btxjournalfont {Trans. Amer. Math. Soc.} \btxvolumefont {354} (2002),
  \btxnumbershort {.}~7, 2933--2953, {\latintext
  \btxurlfont{https://doi.org/10.1090/S0002-9947-02-02998-7}}.

\bibitem {BC1}
\btxnamefont {A.~\btxlastnamefont {Bendikov}} \btxandlong {} \btxnamefont
  {W.~\btxlastnamefont {Cygan}}, \btxjtitlefont {\btxifchangecase
  {{$\alpha$}-stable random walk has massive thorns}{{$\alpha$}-stable random
  walk has massive thorns}}, \btxjournalfont {Colloq. Math.} \btxvolumefont
  {138} (2015), \btxnumbershort {.}~1, 105--130, {\latintext
  \btxurlfont{https://doi.org/10.4064/cm138-1-7}}.

\bibitem {BC2}
\bysame, \btxjtitlefont {\btxifchangecase {On massive sets for subordinated
  random walks}{On massive sets for subordinated random walks}},
  \btxjournalfont {Math. Nachr.} \btxvolumefont {288} (2015), \btxnumbershort
  {.}~8-9, 841--853, {\latintext
  \btxurlfont{https://doi.org/10.1002/mana.201400037}}.

\bibitem {BCT17}
\btxnamefont {A.~\btxlastnamefont {Bendikov}}, \btxnamefont
  {W.~\btxlastnamefont {Cygan}}\btxandcomma {} \btxandlong {} \btxnamefont
  {B.~\btxlastnamefont {Trojan}}, \btxjtitlefont {\btxifchangecase {Limit
  theorems for random walks}{Limit theorems for random walks}}, \btxjournalfont
  {Stochastic Process. Appl.} \btxvolumefont {127} (2017), \btxnumbershort
  {.}~10, 3268--3290, {\latintext
  \btxurlfont{https://doi.org/10.1016/j.spa.2017.02.008}}.

\bibitem {BSC}
\btxnamefont {A.~\btxlastnamefont {Bendikov}} \btxandlong {} \btxnamefont
  {L.~\btxlastnamefont {Saloff-Coste}}, \btxjtitlefont {\btxifchangecase
  {Random walks on groups and discrete subordination}{Random walks on groups
  and discrete subordination}}, \btxjournalfont {Math. Nachr.} \btxvolumefont
  {285} (2012), \btxnumbershort {.}~5-6, 580--605, {\latintext
  \btxurlfont{https://doi.org/10.1002/mana.201000059}}.

\bibitem {BGT_book}
\btxnamefont {N.\btxfnamespacelong H. \btxlastnamefont {Bingham}}, \btxnamefont
  {C.\btxfnamespacelong M. \btxlastnamefont {Goldie}}\btxandcomma {}
  \btxandlong {} \btxnamefont {J.\btxfnamespacelong L. \btxlastnamefont
  {Teugels}}, \btxtitlefont {\btxifchangecase {Regular variation}{Regular
  variation}}, \btxpublisherfont {Cambridge University Press, Cambridge}, 1989,
  {\latintext \btxurlfont{https://doi.org/10.1017/CBO9780511721434}}.

\bibitem {CKW16}
\btxnamefont {Z.\btxfnamespacelong Q. \btxlastnamefont {Chen}}, \btxnamefont
  {T.~\btxlastnamefont {Kumagai}}\btxandcomma {} \btxandlong {} \btxnamefont
  {J.~\btxlastnamefont {Wang}}, \btxjtitlefont {\btxifchangecase {Stability of
  heat kernel estimates for symmetric jump processes on metric measure
  spaces}{Stability of heat kernel estimates for symmetric jump processes on
  metric measure spaces}}, \btxjournalfont {to appear in Memoirs Amer. Math.
  Soc.} (2018).

\bibitem {Deng}
\btxnamefont {Chang\btxfnamespacelong Song \btxlastnamefont {Deng}},
  \btxjtitlefont {\btxifchangecase {Subgeometric {R}ates of {C}onvergence for
  {D}iscrete-{T}ime {M}arkov {C}hains {U}nder {D}iscrete-{T}ime
  {S}ubordination}{Subgeometric {R}ates of {C}onvergence for {D}iscrete-{T}ime
  {M}arkov {C}hains {U}nder {D}iscrete-{T}ime {S}ubordination}},
  \btxjournalfont {J. Theoret. Probab.} \btxvolumefont {33} (2020),
  \btxnumbershort {.}~1, 522--532, {\latintext
  \btxurlfont{https://doi.org/10.1007/s10959-019-00879-z}}.

\bibitem {HebischSC}
\btxnamefont {W.~\btxlastnamefont {Hebisch}} \btxandlong {} \btxnamefont
  {L.~\btxlastnamefont {Saloff-Coste}}, \btxjtitlefont {\btxifchangecase
  {Gaussian estimates for {M}arkov chains and random walks on groups}{Gaussian
  estimates for {M}arkov chains and random walks on groups}}, \btxjournalfont
  {Ann. Probab.} \btxvolumefont {21} (1993), \btxnumbershort {.}~2, 673--709,
  {\latintext
  \btxurlfont{http://links.jstor.org/sici?sici=0091-1798(199304)21:2<673:GEFMCA>2.0.CO;2-R&origin=MSN}}.

\bibitem {KImSongVondr_SCi}
\btxnamefont {P.~\btxlastnamefont {Kim}}, \btxnamefont {R.~\btxlastnamefont
  {Song}}\btxandcomma {} \btxandlong {} \btxnamefont {Z.~\btxlastnamefont
  {Vondra\v{c}ek}}, \btxjtitlefont {\btxifchangecase {Uniform boundary
  {H}arnack principle for rotationally symmetric {L}\'{e}vy processes in
  general open sets}{Uniform boundary {H}arnack principle for rotationally
  symmetric {L}\'{e}vy processes in general open sets}}, \btxjournalfont {Sci.
  China Math.} \btxvolumefont {55} (2012), \btxnumbershort {.}~11, 2317--2333,
  {\latintext \btxurlfont{https://doi.org/10.1007/s11425-012-4516-6}}.

\bibitem {La96}
\btxnamefont {G.\btxfnamespacelong F. \btxlastnamefont {Lawler}}, \btxtitlefont
  {\btxifchangecase {Intersections of random walks}{Intersections of random
  walks}}, \btxpublisherfont {Birkh\"{a}user/Springer, New York}, 2013,
  {\latintext \btxurlfont{https://doi.org/10.1007/978-1-4614-5972-9}}, Reprint
  of the 1996 edition.

\bibitem {Lawler_Limic_book}
\btxnamefont {G.\btxfnamespacelong F. \btxlastnamefont {Lawler}} \btxandlong {}
  \btxnamefont {V.~\btxlastnamefont {Limic}}, \btxtitlefont {\btxifchangecase
  {Random walk: a modern introduction}{Random walk: a modern introduction}},
  \btxpublisherfont {Cambridge University Press, Cambridge}, 2010, {\latintext
  \btxurlfont{https://doi.org/10.1017/CBO9780511750854}}.

\bibitem {Mimica}
\btxnamefont {A.~\btxlastnamefont {Mimica}}, \btxjtitlefont {\btxifchangecase
  {On subordinate random walks}{On subordinate random walks}}, \btxjournalfont
  {Forum Math.} \btxvolumefont {29} (2017), \btxnumbershort {.}~3, 653--664,
  {\latintext \btxurlfont{https://doi.org/10.1515/forum-2016-0031}}.

\bibitem {MS18}
\btxnamefont {A.~\btxlastnamefont {Mimica}} \btxandlong {} \btxnamefont
  {S.~\btxlastnamefont {\v{S}ebek}}, \btxjtitlefont {\btxifchangecase {Harnack
  inequality for subordinate random walks}{Harnack inequality for subordinate
  random walks}}, \btxjournalfont {J. Theoret. Probab.} \btxvolumefont {32}
  (2019), \btxnumbershort {.}~2, 737--764, {\latintext
  \btxurlfont{https://doi.org/10.1007/s10959-018-0821-5}}.

\bibitem {MSC15}
\btxnamefont {M.~\btxlastnamefont {Murugan}} \btxandlong {} \btxnamefont
  {L.~\btxlastnamefont {Saloff-Coste}}, \btxjtitlefont {\btxifchangecase
  {Transition probability estimates for long range random walks}{Transition
  probability estimates for long range random walks}}, \btxjournalfont {New
  York J. Math.} \btxvolumefont {21} (2015), 723--757, {\latintext
  \btxurlfont{http://nyjm.albany.edu:8000/j/2015/21_723.html}}.

\bibitem {Pr81}
\btxnamefont {W.\btxfnamespacelong E. \btxlastnamefont {Pruitt}},
  \btxjtitlefont {\btxifchangecase {The growth of random walks and {L}\'{e}vy
  processes}{The growth of random walks and {L}\'{e}vy processes}},
  \btxjournalfont {Ann. Probab.} \btxvolumefont {9} (1981), \btxnumbershort
  {.}~6, 948--956, {\latintext
  \btxurlfont{http://links.jstor.org/sici?sici=0091-1798(198112)9:6<948:TGORWA>2.0.CO;2-P&origin=MSN}}.

\bibitem {BFs_book}
\btxnamefont {R.\btxfnamespacelong L. \btxlastnamefont {Schilling}},
  \btxnamefont {R.~\btxlastnamefont {Song}}\btxandcomma {} \btxandlong {}
  \btxnamefont {Z.~\btxlastnamefont {Vondra\v{c}ek}}, \btxtitlefont
  {\btxifchangecase {Bernstein functions}{Bernstein functions}},
  \btxeditionnumshort {second}{.}, \btxpublisherfont {Walter de Gruyter \& Co.,
  Berlin}, 2012, {\latintext
  \btxurlfont{https://doi.org/10.1515/9783110269338}}, Theory and applications.

\bibitem {Spitzer}
\btxnamefont {F.~\btxlastnamefont {Spitzer}}, \btxtitlefont {\btxifchangecase
  {Principles of random walk}{Principles of random walk}}, \btxeditionnumshort
  {second}{.}, \btxpublisherfont {Springer-Verlag, New York-Heidelberg}, 1976.

\bibitem {Trojan}
\btxnamefont {B.~\btxlastnamefont {Trojan}}, \btxjtitlefont {\btxifchangecase
  {Long time behaviour of random walks on the integer lattice}{Long time
  behaviour of random walks on the integer lattice}}, \btxjournalfont
  {preprint} (2015), {\latintext
  \btxurlfont{https://arxiv.org/abs/1512.09035}}.

\bibitem {Zn09}
\btxnamefont {M.~\btxlastnamefont {\v{Z}nidari\v{c}}}, \btxjtitlefont
  {\btxifchangecase {Asymptotic expansion for inverse moments of binomial and
  {P}oisson distributions}{Asymptotic expansion for inverse moments of binomial
  and {P}oisson distributions}}, \btxjournalfont {Open Stat. Prob. J.}
  \btxvolumefont {1} (2009), 7--10, {\latintext
  \btxurlfont{https://doi.org/10.2174/1876527000901010007}}.

\bibitem {Woess_book}
\btxnamefont {W.~\btxlastnamefont {Woess}}, \btxtitlefont {\btxifchangecase
  {Random walks on infinite graphs and groups}{Random walks on infinite graphs
  and groups}}, \btxpublisherfont {Cambridge University Press, Cambridge},
  2000, {\latintext \btxurlfont{https://doi.org/10.1017/CBO9780511470967}}.

\end{thebibliography}

\end{document}